\newtheorem{proposition}{Proposition}
\newtheorem{theorem}[proposition]{Theorem}
\newtheorem{lemma}[proposition]{Lemma}
\newtheorem{conjecture}[proposition]{Conjecture}
\theoremstyle{definition}
\newtheorem{definition}[proposition]{Definition}
\newtheorem{remark}[proposition]{Remark}
\newcommand{\Aut}{\operatorname{Aut}}
\newcommand{\SL}{\operatorname{SL}}
\newcommand{\GL}{\operatorname{GL}}
\begin{document}

\author{Will Sawin and Andrew V. Sutherland}

\title{Murmurations for elliptic curves ordered by height}

\begin{abstract}He, Lee, Oliver, and Pozdnyakov~\cite{HLOP} have empirically observed that the average of the $p$th coefficients of the $L$-functions of elliptic curves of particular ranks in a given range of conductors $N$ appears to approximate a continuous function of $p$, depending primarily on the parity of the rank. Hence the sum of $p$th coefficients against the root number also appears to approximate a continuous function, dubbed the murmuration density. However, it is not clear from this numerical data how to obtain an explicit formula for the murmuration density. Convergence of similar averages was proved by Zubrilina~\cite{Zubrilina} for modular forms of weight $2$ (of which elliptic curves form a thin subset) and analogous results for other families of automorphic forms have been obtained in further work~\cite{BBLLD,LOP}. Each of these works gives an explicit formula for the murmuration density. We consider a variant problem where the elliptic curves are ordered by naive height, and the $p$th coefficients are averaged over $p/N$ in a fixed interval. We give a conjecture for the murmuration density in this case, as an explicit but complicated sum of Bessel functions. This conjecture is motivated by a theorem about a variant problem where we sum the $n$th coefficients for $n$ with no small prime factors against a smooth weight function. We test this conjecture for elliptic curves of naive height up to $2^{28}$ and find good agreement with the data. The theorem is proved using the Voronoi summation formula, and the method should apply to many different families of $L$-functions. By a similar approach, we give a prediction murmuration density for elliptic curves of prime conductor, ordered by conductor, again matching the data but lacking a motivating theorem. This is the first work to give an explicit formula for the murmuration density of a family of elliptic curves, in any ordering.  \end{abstract}

\maketitle

\section{Introduction}

We begin with some notation. For $E$ an elliptic curve over the rational numbers, let $a_n(E)$ be the coefficient of $n^{-s}$ in the $L$-function of $E$. Then $L(E,s) = \sum_{n=1}^\infty a_n(E) n^{-s}$ satisfies the functional equation \[ (2\pi)^{-s} \Gamma(s) L(E,s)  =   \epsilon(E)  N(E)^{1-s}  (2\pi)^{s-2} \Gamma(2-s) L(E,2-s)\] for a unique positive integer $N(E)$, the conductor of $E$, and a unique $\epsilon(E) \in \{-1,1\}$, the root number of $E$.

For integers $A, B$, let $E_{A,B}$ be the curve with equation $y^2= x^3 +A x + B$. Every elliptic curve over $\mathbb Q$ can be expressed uniquely as $E_{A,B}$ for integers $A,B$ such that no prime $p$ simultaneously satisfies $p^4|A$ and $p^6|B$.  The \emph{naive height} of an elliptic curve $E/\mathbb Q$ is
\[
H(E)\colonequals H(E_{A,B})\colonequals\max ( 4 \abs{A}^3, 27 \abs{B}^2),
\]
where $A$ and $B$ are uniquely determined by $E_{A,B}\simeq E$ and $p^4\nmid A$ or $p^6\nmid B$ for all primes $p$.

For $S$ a set, let $\mathbb E_{E \in S} [f (E)] \colonequals \frac{1}{\abs{S}} \sum_{E \in S} f(E) $.

Murmurations for elliptic curves ordered by naive height are the averages
\begin{equation}\label{murmurations-average} \mathbb E_{ \{E:H(E)\le X\} }  \Bigl[  \frac{\log  ( N(E)  \frac{C_1+ C_2}{2}  ) }{ N(E) }\sum_{  \substack{ p \in (C_1 N(E) , C_2 N(E)] \\ p \textrm{ prime }}} \epsilon(E) a_p (E)  \Bigr] \end{equation}
The consideration of these averages is inspired by the original work of He, Lee, Oliver, and Pozdnyakov~\cite{HLOP}, who considered a somewhat different average. We briefly explain the motivation for these averages now, and give a more detailed explanation later, after we state our prediction for \eqref{murmurations-average}. Our goal is to understand the correlation between the $p$th coefficient $a_p$ and the root number $\epsilon$. When elliptic curves are ordered by conductor, this correlation seems to depend primarily on the ratio $p/N$ between $p$ and the conductor, and it was suggested by Jonathan Bober (in personal communication) to sum over $p/N$ in an interval $(C_1,C_2)$ when elliptic curves are not necessarily ordered by conductor. The prime number theorem implies that the number of primes with $p/N\in (C_1,C_2)$ is close to $\frac{ (C_2-C_1) N}{ \log (N(C_1+C_2)/2)}$ and we divide by this quantity (except for the constant $C_2-C_1$) before averaging over $E$ to normalize.

To state our conjecture, we need some additional notation. Let $J_1$ be the Bessel function of the first kind, and let $v_p(m)$ denote the $p$-adic valuation of an integer $m$. Our prediction also relies on local terms $\ell_{p,\nu}$ and $\hat{\ell}_{p,\nu}$ depending on a prime $p$ and a nonnegative integer $\nu$ defined, respectively, in Definitions \ref{lp} and \ref{lphat}. We give explicit formulas for these local terms in Lemmas \ref{all-lf} and \ref{prime-lf}.

\begin{conjecture}\label{main-conjecture} For real numbers $0<C_1<C_2$ we have 
\begin{equation}\label{main-prediction} \begin{aligned} & \lim_{X \to \infty}  \mathbb  E_{ \{E:H(E)\le X\} } \Bigl[  \frac{\log  ( N(E)  \frac{C_1+ C_2}{2}  ) }{ N(E) }\sum_{  \substack{ p \in (C_1 N(E) , C_2 N(E)] \\ p \textrm{ prime }}} \epsilon(E) a_p (E)  \Bigr] \\
 = &\int_{C_1}^{C_2}2\pi \sqrt{u}\!\!  \sum_{ \substack{ q \in \mathbb N \\ \emph{squarefree}}}\! \sum_{\substack{m \in \mathbb N }} \frac{ \mu(\gcd(m,q))}{q m \phi\left( \frac{q}{ \gcd(m,q) } \right) }  J_1  \left(4 \pi  \frac{\sqrt{u}m }{ q}  \right)   \prod_{p\mid q} \hat{\ell}_{p,2 v_p(m)} \!\! \prod_{p\mid m, p\nmid q} \ell_{p,{2v_p(m)}}\, du . \end{aligned}\end{equation} \end{conjecture}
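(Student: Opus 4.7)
The plan is to apply Voronoi summation to each $L(E, s)$ individually, to average the resulting dual sum over the family $\{E_{A,B} : H(E_{A,B}) \le X\}$ by factoring into local densities, and then to reduce the prime-restricted statement to the unconditional theorem about smooth sums over small-prime-free integers referenced in the abstract.

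Concretely, I would start by replacing the indicator of ``$p$ prime and $p/N(E) \in (C_1, C_2]$'' with a smooth bump $W$ on $(C_1, C_2)$ multiplied by a Mobius sieve $\sum_{q \mid (n, P(z))} \mu(q)$, where $P(z) = \prod_{p < z} p$; expanding $\mathbf 1_{q \mid n} = q^{-1}\sum_{a \bmod q} e(an/q)$ rewrites the left-hand side of \eqref{main-prediction} as a sum over squarefree $q$ of family averages of $\sum_n \epsilon(E) a_n(E) e(an/q) W(n/N(E))$. For each $E$, Voronoi summation converts the inner $n$-sum into $\epsilon(E)$ times a dual sum over an integer $m$ with Bessel-$J_1$ kernel of argument $4\pi\sqrt{m y}/(q\sqrt{N(E)})$ against $W(y/N(E))\, dy$. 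Multiplying by $\epsilon(E)$ cancels the root number, and substituting $y = u\,N(E)$ turns the Bessel argument into $4\pi\sqrt{u}\,m/q$. The residue sum over $a \bmod q$ assembles into a Ramanujan sum whose value is $\mu(q/\gcd(m,q))\,\phi(q)/\phi(q/\gcd(m,q))$, accounting (after overall normalisation) for the prefactor $\mu(\gcd(m,q))/(q m \phi(q/\gcd(m,q)))$ in \eqref{main-prediction}. The surviving family average $\mathbb E[a_m(E_{A,B})]$ over $(A,B)$ with $H(E_{A,B}) \le X$ factors over primes by the Chinese remainder theorem: at primes $p \mid q$ the Voronoi formula involves local modifications giving $\hat\ell_{p, 2v_p(m)}$, while Mobius-inverting the minimality condition ``no $p$ with $p^4 \mid A$ and $p^6 \mid B$'' yields $\ell_{p, 2v_p(m)}$ at primes $p \mid m$ with $p \nmid q$.

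The main obstacle, and the reason the statement is a conjecture rather than a theorem, is the error introduced by the Mobius sieve when converting the prime-restricted sum into the smooth, small-prime-free sum. A Heath-Brown or Vaughan decomposition produces bilinear remainders of the form $\sum_{E} \epsilon(E) \sum_{k \ell \sim N(E)} \alpha_k \beta_\ell a_{k\ell}(E)$ with $k, \ell$ of comparable size near $\sqrt{N(E)}$, and bounding these uniformly across the height-ordered family appears to require input of roughly the same depth as Lindelof-on-average for the family of elliptic-curve $L$-functions --- well beyond current technology. Accepting these bilinear remainders as heuristically negligible, the preceding manipulations take the unconditional theorem of the abstract as input and deliver exactly the Bessel series on the right-hand side of \eqref{main-prediction}.
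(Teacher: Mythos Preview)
The paper does not prove Conjecture~\ref{main-conjecture}; it proves Theorem~\ref{prime} (the smooth, rough-number version) rigorously and then passes to the conjecture via two explicit heuristics: Cram\'er's random model of the primes (replacing integers with no prime factor $\le P$ by primes, after normalising densities) and replacing the smooth cutoff by a sharp interval. Your outline is in the same spirit, but you frame the obstruction differently --- as Type~II bilinear sums from a Vaughan/Heath-Brown decomposition --- whereas the paper makes no attempt to decompose the prime indicator at all; it simply stops at Theorem~\ref{prime} and declares the remaining step a heuristic. Both framings point to the same hard input, but the paper's is more modest and does not pretend to isolate a specific error term.

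Your sketch of the underlying rigorous argument has several real gaps. First, your Bessel argument is wrong: Voronoi gives $J_1(4\pi\sqrt{un}/q)$ in the dual variable $n$, and the $m$ in \eqref{main-prediction} is $\sqrt n$, not $n$. The substitution $n=m^2$ is justified because $\ell_{p,\nu}$ and $\hat\ell_{p,\nu}$ vanish for odd $\nu$ (cancellation between quadratic twists in the family average); you omit this step, which is why your $4\pi\sqrt{mu}/q$ does not match the stated $4\pi\sqrt u\,m/q$. Second, the paper does not expand $\mathbf 1_{q\mid n}$ over all residues; it writes the indicator of $p\nmid n$ for $p\le P$ directly as $\sum_q \frac{\mu(q)}{\phi(q)}\sum_{a\in(\mathbb Z/q)^\times}e(an/q)$, so that Voronoi (which requires $(a,q)=1$) applies without further reduction. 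Third, and most substantively, the factors $\hat\ell_{p,\nu}$ do not come from ``local modifications in the Voronoi formula.'' The paper handles primes of good, multiplicative, and additive reduction for $E$ by three separate mechanisms (additive characters only at primes of good reduction; M\"obius inclusion-exclusion using $a_{np}=a_pa_n$ at multiplicative primes; nothing at additive primes since $a_n=0$). Averaging over the family then produces auxiliary factors $\ell^*_{p,\nu,\gamma}$, and a non-obvious telescoping shift --- adding a quantity $\Delta$ to $\ell^*_{p,\nu,0}$ and $p^2\Delta$ to $\ell^*_{p,\nu+2,1}$, which leaves the double sum over $(n,q)$ invariant --- is what finally converts $\ell^*$ into the $\ell_{p,\nu}$ and $\hat\ell_{p,\nu}$ of Definitions~\ref{lp} and~\ref{lphat}.
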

We will check (in Lemma \ref{prime-absolute-convergence}) that the sum over $q$ and $m$ on the right-hand side of \eqref{main-prediction} is absolutely convergent uniformly on compact intervals and hence gives a continuous function. The integrand in the right-hand side of \eqref{main-prediction} is the murmuration density in the sense of \cite{SarnakLetter}.

The primary motivation for Conjecture \ref{main-conjecture} is the following theorem.

\begin{theorem}\label{prime} Let $W$ be a smooth, compactly-supported function on $(0,\infty)$. The limit 
\[ \lim_{P\to\infty}   \lim_{X\to\infty}  \mathbb E_{ \{E:H(E)\le X\} } \Bigl[  \frac{\prod_{p \leq P} (1-1/p)^{-1}  }{  N(E)  }\sum_{\substack{ n \in \mathbb N \\ p \nmid n \textrm{ for } p \leq P}} W \left( \frac{n}{ N(E )} \right) \epsilon(E) a_n (E)  \Bigr]\]
exists and is equal to
\[ \int_0^{\infty}\! W(u) 2\pi \sqrt{u}\!\!\sum_{\substack{ q\in\mathbb N \\ \emph{squarefree} }}\! \sum_{\substack{m \in \mathbb N }} \frac{ \mu(\gcd(m,q))}{q m \phi\left( \frac{q}{ \gcd(m,q) } \right) }  J_1  \left(4 \pi  \frac{\sqrt{u}m }{ q}  \right)   \prod_{p\mid q} \hat{\ell}_{p,2 v_p(m)} \!\! \prod_{p\mid m, p\nmid q} \ell_{p,{2v_p(m)}} du .  \]
\end{theorem}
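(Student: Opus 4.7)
The plan is to prove Theorem \ref{prime} by applying Voronoi summation to each individual elliptic curve, then averaging the resulting dual expression over the family $\{E_{A,B}:H(E_{A,B})\le X\}$. For each fixed $E=E_{A,B}$ of conductor $N:=N(E)$, I would apply the Voronoi summation formula for the weight-$2$ newform attached to $E$: this transforms a sum $\sum_n a_n(E) F(n)$ into a dual sum involving $a_m(E)$, a factor of $\epsilon(E)$, and a Bessel transform of $F$ with $J_1$-kernel. The $\epsilon(E)$ produced by Voronoi multiplies the $\epsilon(E)$ already present in the expression; since $\epsilon(E)^2=1$, the dependence on the sign of the functional equation drops out entirely. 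The coprimality restriction ``$p\nmid n$ for all $p\le P$'' is installed by the M\"obius formula $\mathbf{1}_{(n,Q)=1}=\sum_{d\mid\gcd(n,Q)}\mu(d)$ with $Q=\prod_{p\le P}p$, and Voronoi is applied to each resulting inner sum; the normalization $\prod_{p\le P}(1-1/p)^{-1}$ accounts for the density of integers coprime to primes $\le P$ and balances the M\"obius weighting.

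Next, I would interchange the expectation over $(A,B)$ with the $m$-sum, justified by uniform tail bounds using the Deligne estimate $|a_m(E)|\le d(m)\sqrt{m}$ combined with polynomial decay of the $J_1$-integral (from smoothness of $W$, via repeated integration by parts). The key task becomes computing $\mathbb{E}_{(A,B):H\le X}\bigl[a_m(E_{A,B})/\sqrt{N(E_{A,B})}\bigr]$ for each positive integer $m$. By Hecke multiplicativity, $a_m=\prod_p a_{p^{v_p(m)}}$: at primes $p\nmid N(E)$ each factor is a polynomial in the Legendre-character sum $a_p(E_{A,B})=-\sum_{x\bmod p}\chi_p(x^3+Ax+B)$, while at primes $p\mid N(E)$ the local Euler factor and conductor exponent are governed by Tate's algorithm. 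Since integer points $(A,B)$ with $H(E_{A,B})\le X$ equidistribute modulo any fixed integer $q$ as $X\to\infty$, the expectation factors as an archimedean integral (producing the $\sqrt u$ weight in the target formula) times a product of $p$-adic averages over $\mathbb{Z}_p^2$. These $p$-adic averages are finite quadratic character sums which, by orthogonality, vanish unless the relevant polynomials become perfect squares modulo $p^k$; the surviving nonzero contributions are precisely the local factors $\ell_{p,\nu}$ (at primes of ``good'' type) and $\hat\ell_{p,\nu}$ (at primes of ``bad'' type).

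The surviving $m$-indices from Voronoi naturally reorganize into a double sum over pairs $(q,m)$ with $q$ squarefree, where $q$ records the primes with $\hat\ell$-type contributions and $m$ encodes the remaining index. Under this reparametrization the Voronoi Bessel argument takes the final form $J_1(4\pi\sqrt{u}\,m/q)$, and the prefactor $\mu(\gcd(m,q))/\bigl(qm\,\phi(q/\gcd(m,q))\bigr)$ arises from combining the M\"obius weights of Step 1 with the densities of the $p$-adic equidistribution and the Euler-product rearrangement.

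The main obstacle will be the precise local computation at bad primes: Tate's algorithm distinguishes many sub-cases (especially at $p=2,3$), and their contributions must combine uniformly to yield the single expression $\hat\ell_{p,2v_p(m)}$. Secondary technical difficulties include (i) justifying the interchange of the $X\to\infty$ limit with the infinite $m$-sum via the tail bounds above, and (ii) taking the subsequent $P\to\infty$ limit, which requires uniform estimates on $\ell_{p,2v_p(m)}$ so that primes $p>P$ can be absorbed into the Euler-product-type factors appearing in the statement.
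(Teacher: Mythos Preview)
Your overall architecture---Voronoi summation for each $E$, then equidistribution of $(A,B)$ to compute local averages---matches the paper's. But there is a real gap in your handling of the sieve.

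You propose to install the condition $p\nmid n$ for $p\le P$ uniformly via $\mathbf 1_{(n,Q)=1}=\sum_{d\mid (n,Q)}\mu(d)$ and then apply Voronoi to each inner sum. The Voronoi formula in the form you need (Lemma~\ref{elliptic-Voronoi}) carries an additive twist $e(an/q)$ with $\gcd(a,q)=1$ and, crucially, produces on the dual side the character $e(\overline{aN(E)}\,n/q)$, which requires $\gcd(q,N(E))=1$. Your uniform M\"obius sieve gives moduli divisible by primes $p\le P$ that may well divide $N(E)$, so the stated Voronoi does not apply. One can use a more intricate Voronoi at ramified moduli, but you do not acknowledge this, and it changes the shape of the local terms. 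The paper avoids the issue by treating the three reduction types separately: at additive primes $a_n=0$ once $p\mid n$ so no sieve is needed; at multiplicative primes the Hecke relation $a_{pn}=a_p a_n$ makes a direct inclusion--exclusion (your M\"obius) work cleanly; only at good primes does one pass to additive characters, where the coprimality $\gcd(q_g,N(E))=1$ holds by construction. This split is not cosmetic: it is exactly what generates the operator $\operatorname{LT}_{n,q}(E)$ whose $p$-adic average, after a further cancellation trick between the $(n,q)$ and $(np^2,qp)$ terms, \emph{defines} $\hat\ell_{p,\nu}$.

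This leads to a second misconception. You describe $\hat\ell_{p,\nu}$ as the local factor ``at primes of bad type'' for $E$. That is not its origin: $\hat\ell_{p,\nu}$ appears for primes $p\mid q$ (the sieving modulus in the final formula), and it is a weighted $p$-adic average over \emph{all} reduction types, with the weights $1$, $-p$, $-p^2/(p-1)$ arising from the good/multiplicative/additive split above combined with the $(n,q)\leftrightarrow(np^2,qp)$ cancellation. Relatedly, after Voronoi the factor $\sqrt{N(E)}$ cancels completely; you should be computing $\lim_{X\to\infty}\mathbb E[\operatorname{LT}_{n,q}(E)]$, a purely local quantity depending on $(A,B)$ through congruences, not $\mathbb E[a_m(E)/\sqrt{N(E)}]$. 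Finally, the restriction to perfect squares $n=m^2$ comes from quadratic-twist cancellation ($E\mapsto E^{\mathrm{tw}}$ flips the sign of $a_{p^\nu}$ for $\nu$ odd), not from ``polynomials becoming perfect squares modulo $p^k$''.
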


Theorem \ref{prime} differs from Conjecture \ref{main-conjecture} in two ways. First, the sum over primes is replaced by a sum over natural numbers $n$ that have no prime factors $\leq P$, and the inverse density $\log (N \frac{C_1+C_2}{2} ) $ is replaced by the inverse density $\prod_{p\leq P } (1-1/p)^{-1}$ of numbers that have no prime factors $\leq P$. Second, the sum over $n$ with $n/N(E)$ in an interval $(C_1,C_2)$ is replaced with a sum over $n$ weighted by a smooth weight function $W(n/N(E))$, and the integral on the right-hand side is correspondingly made against $W$.

Thus, \eqref{main-prediction} is motivated by Theorem \ref{prime} and the heuristic that these two changes do not affect the density function. For the first change, this heuristic is a version of Cram\'er's random model of the primes: If we think of the primes as a random subset of the natural numbers with no prime factors $\leq P$, then a sum over the primes can be approximated by a sum over natural numbers with no prime factors $\leq P$, normalized appropriately by a density, and we should get better approximations as $P$ grows. For the second change, one generally expects that an estimate that holds for a smooth weight function should hold for more general weight functions, albeit likely with a worse error term.

It might be possible to prove a version of Theorem \ref{prime} with a sharp cutoff with additional analytic effort. On the other hand it seems very difficult to prove a version of Theorem \ref{prime} where the sum over $n$ is restricted to $n$ prime.

We tested \eqref{main-prediction} with a large amount of numerical data, summarized in Figure~\ref{fig:pred} below. Since the left-hand side is a limit as $X$ goes to $\infty$, we are only able to evaluate it by truncating to a particular value of $X$, i.e. by evaluating \eqref{murmurations-average}. Similarly, we truncate the infinite sum on the right-hand side, though at least in this case we know the sum converges and so can establish that the truncation is a good approximation for the infinite sum. (It should be possible to extract from our proof an explicit bound on the convergence rate.)

Figure~\ref{fig:pred} compares \eqref{murmurations-average} and the right-hand side of \eqref{main-prediction} for $C_1 = \nicefrac{j}{2000}, C_2 = \nicefrac{j+1}{2000}$ where $j$ ranges from $0$ to $1999$. Increasing values of $X$ yield progressively better fits, supporting \eqref{main-prediction}.
\begin{figure}[hbp!]
\includegraphics[width=\textwidth]{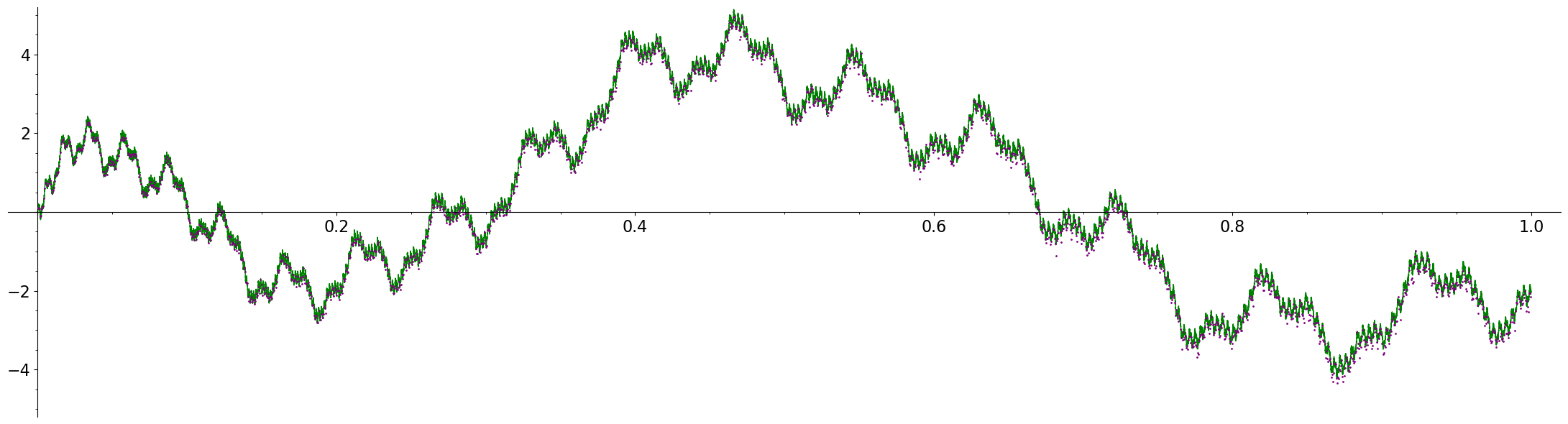}
\caption
{Plot comparing \eqref{murmurations-average} with $X=2^{28}$ (purple dots) to the RHS of \eqref{main-prediction} truncated at $q,m\le B=10^5$ (green curve) for intervals $(\nicefrac{j}{2000},\nicefrac{j+1}{2000}] \subseteq (0,1]$, normalized to have area one (via multiplication by $2000$).}\label{fig:pred}
\end{figure}

An interesting feature is that part of the discrepancy between the purple and green dots comes from a persistent downward bias, where \eqref{murmurations-average} is slightly less than its predicted limiting value. As part of the proof of Theorem~\ref{prime}, we prove a variant statement without the limit as $P$ goes to $\infty$. Graphs of the left-hand side and right-hand side of this statement do not show the downward bias, suggesting it is genuinely a property of the primes. This downward bias seems to decay as $X$ goes to $\infty$, suggesting it will go away in the limit, though it may decay more slowly than other sources of discrepancy between \eqref{murmurations-average} and the right-hand side of \eqref{main-prediction}.

A possible explanation for \eqref{murmurations-average} is related to elliptic curves of rank~$\geq 2$. Curves of larger rank have been observed since work of Birch and Swinnerton-Dyer~\cite{BSD} to have smaller $a_p$ values on average. Since, conjecturally, more curves have rank $2$ than any other rank $>1$, and, conjecturally, curves with rank $2$ have root number $+1$, with both conjectures empirically checked in a range that includes our dataset, this could push the average of $a_p$ times the root number downwards. (On the other hand, it is not obvious that this effect is not already accounted for by Theorem \ref{prime}.) However, while the proportion of curves with naive height $\leq X$ that have rank $2$ is expected to decrease to $0$ as $X\to\infty$, it actually increases for the range of $X$ we consider (from $2^{16}$ to $2^{28}$), as can be seen in \cite{BHKSSW}. Hence, to use curves of rank $\geq 2$ to explain the downward bias, one would have to explain why the bias decreases over this range, rather than increasing as one might expect.

We now give our results computing the local factors $\ell_{p,\nu}$ and $\hat{\ell}_{p,\nu}$ appearing in \eqref{main-prediction} and Theorem \ref{prime}.  We first introduce some further notation.

Let $S^{ k}_0 (\SL_2(\mathbb Z))$ be the space of holomorphic cusp forms of weight $k$ and level $1$. Let $U_\nu$ be the $\nu$th Chebyshev polynomial of the second kind (with $U_0=1$).

\begin{lemma}\label{all-lf}[Lemma \ref{all-lf-detailed}] Fix a prime $p$ and positive integer $\nu$. Let $\ell_{p,\nu}$ be the quantity defined in Definition \ref{lp}. If $p>3$ we have
\[ \ell_{p,\nu}= - \frac{p^{-1}- p^{-2}}{1-p^{-10}}  \sum_{ \substack{ f \in S^{ \nu+2}_0 (\SL_2(\mathbb Z))\\ \emph{eigenform} \\ a_1(f)=1 }} a_p (f) .\]
If $p=3$ we have
\[ \ell_{3,\nu}=  3^{\frac{\nu}{2} -2} \Bigl(   U_\nu \left( \frac{3}{ 2 \sqrt{3} }\right) +4 U_\nu \left( 0\right) +  U_\nu \left( \frac{-3}{ 2 \sqrt{3}} \right) \Bigr)  -   \frac{3^{-10}- 3^{-11}}{1-3^{-10}}  \sum_{ \substack{ f \in S^{ \nu+2}_0 (\SL_2(\mathbb Z))\\ \emph{eigenform} \\ a_1(f)=1 }} a_3 (f) . \]
If $p=2$ we have
\[ \ell_{2,\nu}=-  \frac{2^{-10}}{1-2^{-10}}   \sum_{ \substack{ f \in S^{ \nu+2}_0 (\SL_2(\mathbb Z)) \\ \emph{eigenform} \\ a_1(f)=1 }} a_2 (f) .\] 
\end{lemma}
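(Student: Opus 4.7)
The plan is to identify $\ell_{p,\nu}$ as a $p$-adic integral over Weierstrass coefficients, convert it into an $\abs{\Aut(E)}^{-1}$-weighted sum of $a_{p^\nu}(E)$ over elliptic curves $E/\mathbb{F}_p$, and then recognize the result via the Eichler--Selberg trace formula in weight $\nu+2$ as a sum of $a_p(f)$ over eigenforms. From the form of the answer and the role of $\ell_{p,\nu}$ in Theorem~\ref{prime}, I expect the definition to take the shape of a suitably normalized $p$-adic integral of $a_{p^\nu}(E_{A,B})$ over $(A,B)\in\mathbb Z_p^2$, with the restriction that not both $p^4\mid A$ and $p^6\mid B$; the denominator $1-p^{-10}$ in the output then records the Haar-measure density of this minimality condition, while $p^{-1}-p^{-2}$ arises as the local density of the $p\mid n$ contribution in the Voronoi-summation step that produces $\ell_{p,\nu}$.

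For $p>3$, every short Weierstrass equation is already a minimal model, and $(A,B)$ gives good reduction iff $p\nmid \Delta(A,B)$. On the good-reduction stratum the Chebyshev identity gives $a_{p^\nu}(E_{A,B}) = p^{\nu/2}U_\nu\bigl(a_p(E_{A,B})/(2\sqrt p)\bigr)$; on the multiplicative and additive strata $a_{p^\nu}\in\{\pm 1,0\}$ is explicit. Pulling back through the isomorphism equivalence on curves introduces the weight $\abs{\Aut(E)}^{-1}$, and the Eichler--Selberg trace formula in the form
\[
\sum_{\substack{f\in S_0^{\nu+2}(\SL_2(\mathbb Z))\\ \emph{eigenform},\,a_1(f)=1}} a_p(f) \;=\; -p^{\nu/2}\!\!\sum_{E/\mathbb F_p}\frac{U_\nu(a_p(E)/(2\sqrt p))}{\abs{\Aut(E)}} \;+\; (\text{trivial terms})
\]
then identifies the aggregated mass with the claimed eigenform sum, times the prefactor $\frac{p^{-1}-p^{-2}}{1-p^{-10}}$ after bookkeeping.

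For $p=3$ the key subtlety is that the short Weierstrass equation is not always a minimal model, and the minimal model can have good (in fact supersingular) reduction where the naive equation has additive reduction. I would stratify $\mathbb Z_3^2$ finely enough to distinguish these cases, run Tate's algorithm on each stratum to pin down the true minimal reduction, and observe that the anomalous loci assemble into three contributions with normalized trace $a_3/(2\sqrt 3)\in\{\sqrt 3/2,\,0,\,-\sqrt 3/2\}$. Their multiplicities $1,4,1$ (the $4$ reflecting the extra automorphisms of the $j=0$ supersingular curve) and combined measure $3^{\nu/2-2}$ produce the Chebyshev term $3^{\nu/2-2}\bigl(U_\nu(\sqrt 3/2)+4U_\nu(0)+U_\nu(-\sqrt 3/2)\bigr)$. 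The complementary ``generic'' stratum reduces to the $p>3$ argument but with the modified prefactor $\frac{3^{-10}-3^{-11}}{1-3^{-10}}$ reflecting the removed mass. For $p=2$ a parallel but combinatorially simpler minimal-model analysis shows the anomalous strata are absorbed into the Eichler--Selberg piece, leaving only the trace term with prefactor $\frac{2^{-10}}{1-2^{-10}}$.

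The main obstacle will be the case analysis at $p=3$: correctly enumerating the $3$-adic strata, executing Tate's algorithm on each, and verifying that the resulting multiplicities and measures sum to exactly $U_\nu(\sqrt 3/2)+4U_\nu(0)+U_\nu(-\sqrt 3/2)$. A secondary obstacle is precise bookkeeping of the ``trivial terms'' in Eichler--Selberg (Eisenstein-series and Hurwitz-class-number boundary contributions): for level~$1$ these are well-understood and explicit, but they must be tracked carefully to produce the exact prefactors $\frac{p^{-1}-p^{-2}}{1-p^{-10}}$ and its $p=2,3$ variants rather than some off-by-a-rational-factor approximation.
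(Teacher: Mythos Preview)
Your overall approach---convert the $p$-adic integral to an $|\Aut(E)|^{-1}$-weighted sum over $E\in\overline{\mathcal M}_{1,1}(\mathbb F_p)$, then apply the level-$1$ Eichler--Selberg trace formula---is exactly the paper's strategy (Lemmas~\ref{lf-underlying} and~\ref{lf-modular}). However, your reading of the $p=3$ and $p=2$ cases needs correction.

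For $p=3$ you have the roles reversed. A short Weierstrass equation $y^2=x^3+Ax+B$ over $\mathbb F_3$ with $3\nmid\Delta$ is automatically \emph{supersingular}, since the $x^2$-coefficient vanishes. So it is the \emph{ordinary} curves whose good reduction is hidden behind an apparently singular short equation and only revealed after passing to a model $y^2=x^3+a_2x^2+a_4x+a_6$ with $a_2\not\equiv 0\pmod 3$. The paper shows (via a measure-preserving bijection $(a_2,A',B')\leftrightarrow(a_2,a_4,a_6)$ rather than Tate's algorithm) that the fiber over $E$ has measure $(3^{-1}-3^{-2})/|\Aut(E)|$ times $3$ in the supersingular case and $3^{-9}$ in the ordinary case. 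The Chebyshev term thus records the \emph{over}-representation of supersingular curves; the multiplicities $1,4,1$ come from enumerating the four supersingular classes with $a_3\in\{0,3,-3,0\}$ and $|\Aut|\in\{6,6,6,2\}$, not from a single $j=0$ automorphism count.

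For $p=2$ the analysis is not combinatorially simpler but \emph{more} involved: every short equation is cuspidal mod $2$, so one must pass to the full long Weierstrass form with $a_1,a_3$. The paper's bijection argument then shows that every $E$ acquires the same correction factor $2^{-8}$, which is why no Chebyshev term survives. Your plan to stratify and run Tate's algorithm would eventually reach the same endpoint, but the paper's change-of-variables approach is cleaner and avoids case-by-case Kodaira-type analysis.
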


\begin{lemma}\label{prime-lf}[Lemma \ref{prime-lf-detailed}] Fix a prime $p$ and a nonnegative even integer $\nu$.  Let $\hat{\ell}_{p,\nu}$ be the quantity defined in Definition \ref{lphat}. If $p>3$ we have
\[ \hat{\ell}_{p, 0 } =   \frac{1- p^{-1}}{ 1- p^{-10}}  \]
\[ \hat{\ell}_{p,2} =  - \frac{p-p^{-1} +p^{-2}- p^{-8} }{ (1-p^{-10}) (p-1) }   \]
and for $\nu>2$
\[\hat{\ell}_{p,\nu} = -  \frac{p^{-1}- p^{-2}}{1-p^{-10}} \Bigl(  p+1  + \!\!\!\!\! \sum_{ \substack{ f \in S^{ \nu+2}_0 (\SL_2(\mathbb Z))\\ \emph{eigenform} \\ a_1(f)=1 }}\!\!\!\! a_p (f) \Bigr).\]
If $p=3$ we have
\[ \hat{\ell}_{3, 0 } =   (1-3^{-10})^{-1}  \left( \frac{2}{3} + \frac{4}{3^{11}} \right) \]
\[ \hat{\ell}_{3,2} = -\frac{ 3 - 3^{-7}+16 \cdot 3^{-11}   } {2  (1-3^{-10} )} \]
and for $\nu>2$
\[\hat{\ell}_{3,\nu} = \frac{2}{9} (1-3^{-10} )^{-1}  \Bigl( 3^{ \frac{\nu}{2}} \Bigl(  U_\nu\! \left( \frac{3}{ 2 \sqrt{3} }\right)\! +2 U_\nu\! \left( 0\right)  +3^{-9} \Bigl(  U_\nu\! \left( \frac{2}{ 2 \sqrt{3} }\right)\!+U_\nu\! \left( \frac{1}{ 2 \sqrt{3} }\right) \Bigr) \Bigr)  -3^{-8}  \Bigr) . \]
If $p=2$ we have
\[ \hat{\ell}_{2, 0 } =  \frac{2^{-9}}{ 1- 2^{-10}} \]
\[ \hat{\ell}_{2,2} =  - \frac{ 4 - 2^{-6}  + 3  \cdot 2^{-10} }{1- 2^{-10}} \]
and for $\nu>2$
\[\hat{\ell}_{2,\nu}  = -  \frac{1}{ 2^{10}-1} \Bigl(  3  +\!\!\!\!\!\!\!\! \sum_{ \substack{ f \in S^{ \nu+2}_0 (\SL_2(\mathbb Z))\\ \emph{eigenform} \\ a_1(f)=1 }}\!\!\!\!\!\! a_2 (f) \Bigr).\]
\end{lemma}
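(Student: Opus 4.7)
The plan is to parallel the proof of Lemma \ref{all-lf}, reducing $\hat{\ell}_{p,\nu}$ to a weighted average over Weierstrass pairs $(A,B)$ modulo a high power of $p$ and then applying the Eichler--Selberg trace formula to $S_{\nu+2}^0(\SL_2(\mathbb Z))$. The expected difference from $\ell_{p,\nu}$ is that $\hat{\ell}$ carries an additional boundary/Eisenstein-type contribution (the extra $p+1$ seen for $\nu>2$), reflecting a different local normalization in Definition \ref{lphat}.

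First I would unpack Definition \ref{lphat} and write $\hat{\ell}_{p,\nu}$ as a $p$-adic average of $p^{\nu/2}\,U_\nu(a_p(E_{A,B})/(2\sqrt p))$ over pairs $(A,B)$ for which $E_{A,B}$ has good reduction at $p$, plus an explicit contribution from singular reductions and a geometric-series contribution from non-minimal equations with $p^4\mid A$ and $p^6\mid B$ producing the universal factor $(1-p^{-10})^{-1}$. For $p>3$, I would then group the smooth part by $\mathbb{F}_p^\times$-scaling orbits $(A,B)\sim(u^4 A,u^6 B)$, converting the sum into
\[ (p-1)\sum_{[E]/\mathbb{F}_p}\frac{1}{\#\Aut(E)}\,p^{\nu/2}\,U_\nu\!\left(\frac{a_p(E)}{2\sqrt p}\right), \]
and apply the Hurwitz mass identity $\sum_{[E]:\,a_p(E)=t}\#\Aut(E)^{-1}=H^*(4p-t^2)$. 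The Eichler--Selberg trace formula then packages this into $-\sum_f a_p(f)$ over normalized eigenforms of weight $\nu+2$ plus the hyperbolic boundary term $\tfrac{1}{2}\sum_{dd'=p}\min(d,d')^{\nu+1}$, which evaluates to $\tfrac{p+1}{2}$ for $\nu>0$ and combines with the singular-reduction and non-minimal corrections to produce the stated closed form.

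For $\nu\in\{0,2\}$ the cusp-form space $S_{\nu+2}^0(\SL_2(\mathbb Z))$ is zero, so the answer comes entirely from the boundary term and from the explicit bad-reduction and non-minimal counts; the anomalous $-p^{-8}$ in $\hat{\ell}_{p,2}$ arises from the interaction between the weight $p^{\nu/2}$ and the Hasse bound inside the minimality correction. The main obstacle is the local analysis at $p=2$ and $p=3$: the short Weierstrass form $y^2=x^3+Ax+B$ fails to represent every elliptic curve over $\mathbb Q_p$, $\Aut(E)$ is generically larger because $j=0$ and $j=1728$ behave non-generically there, and the condition ``$p^4\nmid A$ or $p^6\nmid B$'' can disagree with genuine minimality. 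These issues force a direct enumeration modulo a sufficiently high power of $p$ (likely $p^{11}$, as suggested by the $p^{-10}$ and $p^{-11}$ denominators in the answer), followed by identification of the resulting finite sums with the explicit Chebyshev evaluations $U_\nu(3/(2\sqrt 3))$, $U_\nu(0)$, $U_\nu(2/(2\sqrt 3))$, $U_\nu(1/(2\sqrt 3))$ at $p=3$, and with the purely boundary-type rational expressions at $p=2$ where the short model captures only a tiny proportion of the local moduli.
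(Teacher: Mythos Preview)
Your high-level strategy---reduce to a weighted count over reduction types and feed the good-reduction part through the trace formula---matches the paper's (Lemmas \ref{lf-underlying} and \ref{lf-modular}). But the accounting you sketch has genuine errors.

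First, the $(p+1)$ for $\nu>2$ does \emph{not} come from the hyperbolic boundary of Eichler--Selberg. Your evaluation $\tfrac{1}{2}\sum_{dd'=p}\min(d,d')^{\nu+1}=\tfrac{p+1}{2}$ is simply wrong: for $p$ prime the divisor pairs are $(1,p)$ and $(p,1)$, so the sum is $2$ and the half is $1$. In the paper's framework, Lemma \ref{lf-modular} packages the full $\overline{\mathcal M}_{1,1}(\mathbb F_p)$ sum (smooth \emph{and} nodal) as $-\sum_f a_p(f)$, so the hyperbolic/nodal piece is already absorbed. The $(p+1)$ in the answer has a different origin, which you have not identified: Definition \ref{lphat} assigns weight $-p$ (not $+1$) to curves of multiplicative reduction when $\nu>2$, contributing $-p\cdot\tfrac{1+(-1)^\nu}{2}$, and converting the good-reduction sum from $\mathcal M_{1,1}$ to $\overline{\mathcal M}_{1,1}$ subtracts another $\tfrac{1+(-1)^\nu}{2}$. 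Together these give the $-(p+1)$ for $\nu$ even. You need to engage with the case-by-case weights in Definition \ref{lphat} rather than treat bad reduction as a vague ``singular contribution.''

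Second, your explanation of the $p^{-8}$ in $\hat{\ell}_{p,2}$ is off. It does not come from any Hasse-bound interaction; it comes from the special weight $-\tfrac{p^2}{p-1}$ that Definition \ref{lphat} assigns to \emph{additive} reduction only when $\nu=2$, multiplied by the measure $p^{-2}-p^{-10}$ of the additive locus. Likewise, the factor $(1-p^{-10})^{-1}$ is already written into Definition \ref{lphat} as a normalization of the restricted domain $\mathbb Z_p^2\setminus(p^4\mathbb Z_p\times p^6\mathbb Z_p)$; there is no geometric series over non-minimal equations to derive.

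Finally, for $p=2,3$ the paper does \emph{not} enumerate modulo $p^{11}$. It proves (Lemmas \ref{3-local-criterion}--\ref{3-scaled} and \ref{2-local-criterion}--\ref{2-scaled}) via a measure-preserving change of variable to long Weierstrass form that the mass of $(A,B)$ with reduction $E$ is $\tfrac{p^{-1}-p^{-2}}{|\Aut E|}$ times an explicit correction ($3$ or $3^{-9}$ depending on supersingular/ordinary at $p=3$; $2^{-8}$ at $p=2$), and then lists the handful of curves over $\mathbb F_3$ and $\mathbb F_2$. Your brute-force plan would work in principle but is much heavier and would obscure where the $3^{-9}$ and $2^{-8}$ come from.
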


\subsection{History and prior work}

We now explain how our work relates to prior work on murmurations.

Let $\operatorname{rank}(E)$ be the $\mathbb Q$-rank of the group of rational points of $E$. He, Lee, Oliver, and Pozdnyakov~\cite{HLOP} originally considered averages of the form
\begin{equation}\label{original-murmurations}  \mathbb E_{\substack{  N(E) \in [X, 2X] \\ \operatorname{rank}(E) = r}}  [ a_p (E) ]  \end{equation} and plotted them as a function of $p$ for fixed $r$, observing continuous oscillations, depending on the rank, that they dubbed ``murmurations". (They actually considered increasing intervals of length $1000$, but subsequent work has usually used dyadic intervals or slightly smaller intervals like $[X, X+ X^{1-\delta}]$ for $\delta>0$ small, as longer averages make analysis more tractable and data smoother.) This raised the question of finding a number-theoretic explanation for the murmurations, which would presumably also give a prediction for their shape.

In particular, later investigations~\cite{SutherlandLetter} revealed that the oscillations in these averages seem to reflect a continuous function of the ratio $p/X$ that becomes more apparent after further averaging over $p$ with $p/X $ in an interval. Thus, the murmurations problem in its original form might be to evaluate the limit of \eqref{original-murmurations} as $X$ goes to $\infty$ with $r$ fixed and $p/X$ converging to a fixed value, if it exists, or the limit of the average over $p$ such that $p/X$ lies in a fixed interval of \eqref{original-murmurations} as $X \to\infty$, otherwise.

Subsequent work has studied various modifications of \eqref{original-murmurations}. We explain the origin of the modifications that led to \eqref{murmurations-average}.

\begin{enumerate}[label=(\roman*)]
\item The unusual oscillatory behavior of \eqref{original-murmurations} seems to depend primarily on the parity of the rank. On the other hand, that elliptic curves of higher rank tend to have more points over small finite fields was previously observed and can be seen in the original form of the Birch and Swinnerton-Dyer conjecture~\cite{BSD}. Thus it makes sense to break into even and odd rank cases rather than considering each rank separately, especially because there are (empirically and conjecturally) many fewer elliptic curves of ranks greater than $1$. This was done in early follow-up work~\cite{SutherlandLetter,HLOPS}. Equivalently under the parity conjecture, we break into cases where the $\epsilon$ factor $\epsilon(E)$ of the functional equation of the $L$-function of $E$ is $+1$ or $-1$. In nearly all subsequent work, such as \cite{Zubrilina}, rather than considering these two cases separately, we subtract them from each other, as in
\[ \mathbb E_{\substack{  N(E) \in [X, 2X] }}  [ \epsilon(E) a_p (E) ] .\]
This can be motivated in multiple different ways. Most simply, by subtracting two data sets which are empirically approximately mirror images from each other, we have one function to work with instead of two. Second, analytic methods like the trace formula behave well on sums involving epsilon factors, while restricting to one parity would introduce extra terms.

Third, and most subtly, empirically the averages over elliptic curves with fixed parity are less smooth and slower to converge. It would be interesting to explain this theoretically, which seems challenging in our setting of elliptic curves, but may be doable for averages over modular forms or other settings where the trace formula can be applied, if it is possible to estimate these additional terms and show they lead to a slower rate of convergence. 

\item Bober (in personal communication) hypothesized that, rather than $p/X$, the crucial ratio is really $p/N(E)$, the ratio of the prime to the conductor of the elliptic curve. He suggested that rather than averaging $\epsilon(E)a_p(E) $ over many elliptic curves with different conductors, we first sum $a_p(E)$ over all primes $p$ with $p/N$ in a certain interval, and then sum over elliptic curves, producing formulas like
\[ \mathbb E_{\substack{  N(E) \in [X, 2X] }} \Bigl [ \sum_{ \substack{ p \in (C_1 N(E) , C_2 N(E)) \\ p \textrm{ prime }} }[ \epsilon(E) a_p (E) ]\Bigr ] .\]
As there are more primes $p$ in that interval when $N(E)$ is larger, this average over elliptic curves will be biased towards those with large conductor. To mitigate this, in this paper we divide by the expected number of $p$ in the range, as in
\begin{equation}\label{conductor-locally-averaged} \mathbb E_{\substack{  N(E) \in [X, 2X] }}  \Bigl[  \frac{\log \left( N(E)\frac{C_1+C_2}{2}\right) }{  N(E) }\sum_{  \substack{ p \in (C_1 N(E) , C_2 N(E)) \\ p \textrm{ prime }}} \epsilon(E) a_p (E) \Bigr ] .\end{equation}
An alternative, which should have similar behavior, and might be slightly better-motivated but harder to work with analytically, is to simply average $\epsilon(E)a_p(E)$ over all $p$ in the interval, as suggested in \cite{SutherlandTalk}, producing
\[ \mathbb E_{\substack{  N(E) \in [X, 2X] }} \Bigl [ \mathbb E_{ \substack{ p \in (C_1 N(E) , C_2 N(E)) \\ p \textrm{ prime }} }[ \epsilon(E) a_p (E) ]\Bigr ]. \]

\item We sum over elliptic curves ordered by naive height instead of conductor (though see \S\ref{ss-conductor} for partial progress on the conductor ordering). Naive height is also a natural way to order elliptic curves, and carries the advantage that many statistical results are known for elliptic curves ordered by naive height (while essentially none are known for elliptic curves ordered by conductor, so prospects for proving even a partial result towards the murmuration density for elliptic curves ordered by conductor are grim). However, the second author \cite{SutherlandLetter} observed that no murmuration patterns appeared when averaging $\epsilon(E) a_p(E)$ for fixed $p$ over elliptic curves $E$ ordered by naive height. Local averaging, as in (ii) above, fixes this, and makes murmurations visible when curves are ordered by naive height. Replacing the conductor ordering in \eqref{conductor-locally-averaged} with the naive height ordering, we obtain  \eqref{murmurations-average}.

\end{enumerate}

In a different direction, murmurations have been generalized from elliptic curves to many other families of $L$-functions, after the observation that murmurations are a general phenomenon that is not specific to elliptic curves \cite{HLOPS,SutherlandLetter}.

Work on murmurations that rigorously establishes murmuration densities has focused on families of automorphic forms, rather than elliptic curves, because the various trace formulae available for automorphic forms enable the calculation of averages of $\epsilon a_p$ over the family (though, in the case $\epsilon \neq \pm 1$, one usually considers $\epsilon^{-1} a_p$ instead). The closest to the setting of elliptic curves would be modular forms of fixed weight and varying level, as elliptic curves correspond to modular forms of fixed weight $2$ and varying level with rational coefficients under the modularity theorem. This case was studied by Zubrilina~\cite{Zubrilina}, obtaining an explicit formula for the murmuration density. Modular forms of level 1 and varying weight were studied by Bober, Booker, Lee, and Lowry-Duda~\cite{BBLLD} while Maass forms of level 1 were studied by Booker, Lee, Lowry-Duda, Seymour-Howell, and Zubrilina~\cite{BLLDSHZ}. In the setting of automorphic forms of rank $1$, murmurations for Dirichlet characters were studied by Lee, Oliver, and Pozdnyakov~\cite{LOP} and murmurations for Hecke $L$-functions of imaginary quadratic fields were studied by Wang~\cite{Wang}.

Cowan~\cite{Cowan2} established a form of murmurations for elliptic curves conditional on the ratios conjectures~\cite{ratios} and other hypotheses. (Cowan~\cite{Cowan3} also established an unconditional version of murmurations for families of quadratic twists of Dirichlet characters, although this, like Proposition \ref{all} below, does not restrict to primes.) Note that Cowan does not use the local average (ii), instead simply summing over $p$ in an interval depending on height. The ratios conjectures give a general recipe for conjecturing averages of ratios of $L$-functions. To apply this, Cowan uses an explicit formula relating $a_p$ for primes $p$ to the one-level density of the zeros of the $L$-function of the elliptic curve, which itself can be expressed in terms of the logarithmic derivative of the $L$-function of the elliptic curve, which is the sort of ratio covered by the ratios conjecture. Cowan's conjectural formula for the murmuration density has not yet been computed explicitly and compared against empirical data, but this likely could be done.

Cowan has indicated that the approach via the ratios conjecture can also be applied to the locally averaged version of murmurations. If this is done, the predicted murmurations density will presumably agree with ours, obtained via the Cram\'{e}r-type heuristic.  The ratios conjecture method involves transforming the $L$-functions in the numerators using functional equations in all possible ways to absorb any epsilon-factors that appear, expanding as a sum and replacing all $L$-function coefficients by their long-run average, and then summing the resulting terms. In our case, there is only one $L$-function in the numerator and only one way to transform it under the functional equation to absorb the epsilon-factor, so the first step is equivalent to applying the Voronoi summation formula, and then replacing Fourier coefficients with their long-run averages should be equivalent to applying the Cram\'er's heuristic, which can be expressed as an exchange of limits and hence as replacing quantities with their long-run averages.

This fits with Sarnak's suggestion \cite{SarnakLetter} that murmurations occur around a phase transition in the one-level density. Frequently in statistics of $L$-functions problems, a phase transition is where the contributions of small primes are most apparent. The murmurations would then arise from the contributions of small primes to the epsilon-factor-twisted average of the one-level density, refracted through the functional equation.

Finally, we discuss earlier work. While the sum of $\epsilon(E) a_p(E)$ over elliptic curves $E$ seems to have not attracted much study before the discovery of murmurations, sums of $\epsilon(E)$ and $a_p(E)$ alone had both been studied before: Showing cancellation in the sum of $\epsilon(E)$ over $E$ in a given family of elliptic curves is equivalent to showing that the root number equidistributes for that family, a question that was studied by Helfgott~\cite{Helfgott04}, proving conditional positive results for some families of elliptic curves and negative results for others. Estimating the sum of $a_p(E)$ over both $E$ and $p$, with smooth averaging in $p$, is roughly equivalent to calculating the one-level density of the $L$-functions of the family of elliptic curves. This one-level density has been studied by many authors, with some of the strongest results obtained by Young~\cite{Young}  and Baier and Zhao~\cite{BZ}. Both of these may be compared to Conjecture \ref{main-conjecture}, though the techniques required to study them are somewhat different.

Note that to estimate the average of $a_p(E)$ over elliptic curves with a given root number, it would suffice to estimate the average of $a_p(E) \epsilon(E)$ as well as the average of $a_p(E)$ and, to find the denominator, $\epsilon(E)$, so studying that form of murmurations could require combining all these directions.

\subsection{Definitions of the local factors}

We are now ready to define the local factors. We first review the concrete description of the coefficients of the $L$-function of the elliptic curve:

For $p$ prime, $a_p(E)$ is equal to $p+1 - \abs{E(\mathbb F_p)}$ if $E$ has good reduction at $p$, equal to $1$ if $E$ has split multiplicative reduction at $p$, equal to $-1$ if $E$ has non-split multiplicative reduction at $p$, and equal to $0$ if $E$ has additive reduction at $p$.  For $p^\nu$ a prime power, $a_{p^\nu}(E) = p^{ \frac{\nu}{2}} U_\nu \left( \frac{a_p(E)}{2\sqrt{p}}\right) $ for $E$ with good reduction at $p$ and $a_{p^\nu}(E) = (a_p(E))^\nu$ for $E$ with bad reduction at $p$. Finally we have $a_{nm}(E) =a_n(E) a_m(E)$ for $n,m$ coprime and this uniquely determines $a_n(E)$ for all $n$.

The formulas for $a_p$ and $a_{p^\nu}$ make sense for an elliptic curve $E$ over $\mathbb Q_p$, allowing us to define $a_{ p^\nu}(E)$ for $E$ an elliptic curve over $\mathbb Q_p$ by the same formulas. The local factors will be expressed as averages of $a_{p^\nu}(E)$ for elliptic curves $E$ over $\mathbb Q_p$.

Note that we define the local factors for all nonnegative integers $\nu$, even though only the local factors for even $\nu$ appear in our results. This is because the local factors for odd $\nu$ appear in the proofs, but all turn out to vanish, allowing us to state our final formulas using only terms with even $\nu$.

\begin{definition}\label{lp} For a prime $p$ and nonnegative integer $\nu$, let \[\ell_{p,\nu} \colonequals \frac{1}{1-p^{-10}} \int_{  (A,B) \in \mathbb Z_p^2 \setminus ( p^4 \mathbb Z_p \times  p^6 \mathbb Z_p)}  a_{p^\nu}( E_{A,B}), \] with the integral taken against the uniform measure on $\mathbb Z_p^2 $ with total mass $1$.\end{definition}

\begin{definition}\label{lphat} For a prime $p$ and a nonnegative integer $\nu$, define  $\hat{\ell}_{p, \nu}$ by
\[ \hat{\ell}_{p, 0 } \colonequals  (1-p^{-10} )^{-1}   \int_{ \substack{A, B\in \mathbb Z_p \\ p^4\nmid A \textrm{ or } p^6 \nmid B \\ p \nmid N(E_{A,B}) }} 1 \]  
\[ \hat{\ell}_{p,1} \colonequals (1-p^{-10} )^{-1}  \int_{ \substack{A, B\in \mathbb Z_p \\ p^4\nmid A \textrm{ or } p^6 \nmid B \\ p^2 \nmid N(E_{A,B}) }}  a_p(E_{A,B}) \cdot \begin{cases} (1-1/p )^{-1} & \textrm{if } p \mid N(E_{A,B}) \\ 1 & \textrm{if } p\nmid N(E_{A,B}) \end{cases}\]
\[ \hat{\ell}_{p,2} \colonequals (1-p^{-10} )^{-1}  \int_{ \substack{A, B\in \mathbb Z_p \\ p^4\nmid A \textrm{ or } p^6 \nmid B  }} \cdot \begin{cases}  a_{p^2}(E_{A,B}) & \textrm{if }E_{A,B}\textrm{ has good reduction} \\ - p a_{p^2}(E_{A,B}) & \textrm{if }E_{A,B}\textrm{ has multiplicative reduction} \\  - \frac{p^2}{p-1} & \textrm{if }E_{A,B} \textrm{ has additive reduction} \end{cases} \]
and for $\nu>2$
\[\hat{\ell}_{p,\nu} \colonequals (1-p^{-10} )^{-1}  \int_{ \substack{A, B\in \mathbb Z_p \\ p^4\nmid A \textrm{ or } p^6 \nmid B \\ p^2 \nmid N(E_{A,B}) }}   a_{p^\nu} (E_{A,B})  \cdot \begin{cases} 1 & \text{if } p \nmid N(E_{A,B}) \\ -p & \textrm{if } p \mid N(E_{A,B}) \end{cases} .\]  \end{definition}

\subsection{Strategy of the proof}

We now explain the key ideas in the proof of Theorem \ref{prime}. A common tool for estimating sums of coefficients of modular forms $a_n$ against smooth weight functions is the Voronoi summation formula. This formula relates two different sums over $a_n$ against two different smooth weight functions, and there is an uncertainty principle phenomenon: If one sum has a very smooth weight function supported on many different $n$, the other sum will have a very concentrated weight function supported on very few $n$. This will be helpful to us as the concentrated weight function is easier to estimate. 

The usual statement of the Voronoi formula does not involve the condition that $p\nmid n$ for $p\leq P$, instead allowing us to weight the sum by additive characters. Thus we express the condition $p\nmid n$ in terms of additive characters before applying the Voronoi formula. (Actually, we do this only for $p$ of good reduction, because there are other ways to express the condition $p\nmid n$ when $E$ has good reduction at $n$ that turn out to be simpler.)  
 
The resulting sum with a concentrated weight function supported on very few $n$ converges rapidly enough that we are able to exchange it with the expectation and the limit over $X$, requiring us to calculate the asymptotic averages of $a_n$ over elliptic curves ordered by naive height, together with some additional terms at places of bad reduction. This is not so hard, since $a_n(E_{A,B})$ depends only on the congruence classes of $E_{A,B}$ modulo some power of $n$, and it is easy to estimate the number of $A,B$ in each congruence class. This leads to the local factors $\ell_{p,\nu}$ and $\hat{\ell}_{p,\nu}$. Since this average vanishes unless $n$ is a perfect square, we introduce a change of variables $n=m^2$ that simplifies the expression.
 
A similar proof should apply to elliptic curves ordered by other reasonable height functions, although the local factors may be different, especially at the primes $2$ and $3$ --- for the Faltings height it seems likely that $2$ and $3$ need not be special cases at all, and instead the same formulas which here calculate $\ell_p$ and $\hat{\ell}_p$ for $p>3$ should work for all primes.
  
More generally, a similar proof should be applicable to many different families of $L$-functions, producing predicted murmuration densities. In particular, this should apply to geometric families in the sense of~\cite{SST} (arising from families of algebraic varieties), while the trace formula method used in \cite{BBLLD,BLLDSHZ,LOP,Wang,Zubrilina} is only applicable to harmonic families (sets of automorphic forms defined by local conditions).   Since the method depends on the Voronoi summation formula, which depends on the functional equation of the $L$-function, one needs either to work in a case where the functional equation of the $L$-function is known or include the functional equation as another heuristic assumption. (In cases where the epsilon factor is not just $\pm 1$, one should always use the inverse of the $\epsilon$ factor to define the initial sum, as in~\cite{LOP}, so that it cancels the epsilon factor appearing in the summation formula.) One can then obtain averages of $L$-function coefficients on the other side by a similar method of counting points in congruence classes for geometric families. The method could also apply to harmonic families, where the averages of the $L$-function coefficients on the other side can be estimated by the trace formula, though it is not clear if there are cases where this is better than applying the trace formula directly. 

There has been interest in the murmurations of hypergeometric motives~\cite{CKR}, which depend on some character data and a rational number. If ordered by the Weil height of this rational number, a similar argument would give a murmuration density whose local factors $\ell_{p,\nu}$ can be expressed in terms of the trace of Frobenius on the $\nu$th symmetric power of hypergeometric sheaves over finite fields. The greater the weights of the Frobenius eigenvalues appearing in these cohomology groups for small $\nu$, the larger the $\ell_{p,\nu}$ will be, which should cause the murmuration density to be less smooth as the series defining it converges more slowly (or not at all). For example, this suggests a less smooth density when the trivial character does not appear in the character data, as then the first symmetric power has nontrivial $H^1$, or when the monodromy is orthogonal, as then the second symmetric power has trivial $H^2$.  A similar orthogonal monodromy case was studied in \cite{SarnakLetter}, where indeed the murmuration density is not continuous, and not even a measure. The case of modular forms of weight $1$ may be similar as well.

\subsection{Additional results} 

We also prove two variants of Theorem \ref{prime}. The first one simply removes the restriction that $p\nmid n$ for $p \leq P$. Replacing the average over $a_p$ with an average over $a_n$ for all $n$ in the study of murmurations was suggested by  Bober, Booker, Lee, and Lowry-Duda~\cite[Remark (9) on p. 5]{BBLLD}.

\begin{proposition}\label{all} Let $W$ be a smooth, compactly-supported function on $(0,\infty)$. The limit 
\begin{equation}\label{main-limit} \lim_{X \to\infty}  \mathbb E_{ \{E:H(E)\le X\} }  \Bigl[  \frac{1 }{  N(E)  }\sum_{n=1}^{\infty}W \left( \frac{n}{ N(E)} \right) \epsilon(E) a_n (E)  \Bigr]\end{equation}
exists and is equal to
\begin{equation}\label{main-evaluation}  \int_0^{\infty}\! W(u) 2\pi \sqrt{u}\sum_{m=1}^{\infty}  \frac{1}{m} J_1\bigl( 4 \pi \sqrt{u}m\bigr)  \prod_{p \mid m } \ell_{p,{ 2v_p(m)}}  du. \end{equation} \end{proposition}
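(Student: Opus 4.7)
The plan is to use the functional equation of $L(E,s)$---equivalently the Voronoi summation formula for weight-$2$ forms---to convert the sum against the smooth weight $W(n/N(E))$ into a dual sum whose kernel is independent of both $\epsilon(E)$ and $N(E)$. Starting from
\[ \sum_n a_n(E)\, W(n/N(E)) \;=\; \frac{1}{2\pi i}\int_{(c)} L(E,s)\, N(E)^s\, \hat W(s)\, ds \]
by Mellin inversion, I would substitute the displayed functional equation in the form $\epsilon(E)\,L(E,s) = (2\pi)^{2s-2} N(E)^{1-s}\,\Gamma(2-s)\,\Gamma(s)^{-1}\,L(E,2-s)$, send $s\mapsto 2-s$, and expand the resulting $L$-function as a Dirichlet series. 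The $N(E)$ dependence collapses, giving
\[ \frac{\epsilon(E)}{N(E)}\sum_{n}a_n(E)\,W(n/N(E)) \;=\; \sum_n a_n(E)\,\Phi(n) \]
for an $E$-independent Mellin--Barnes integral $\Phi(n)$. The standard identity $\int_0^\infty J_1(2\sqrt y)\,y^{s-3/2}\,dy = \Gamma(s)/\Gamma(2-s)$ then rewrites
\[ \Phi(n) \;=\; \frac{2\pi}{\sqrt n}\int_0^\infty W(u)\,\sqrt u\, J_1(4\pi\sqrt{nu})\, du. \]

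Next, because $W$ is smooth and compactly supported, $\hat W(2-s)$ is of Schwartz class along vertical strips, so shifting contours in the Mellin--Barnes formula yields $\Phi(n) = O_{W,A}(n^{-A})$ for every $A>0$. Combined with the Deligne bound $|a_n(E)|\le d(n)\,n^{1/2}$, valid for every elliptic curve, the series $\sum_n a_n(E)\,\Phi(n)$ converges absolutely and uniformly in $E$, which by dominated convergence justifies interchanging $\lim_{X\to\infty}$ with the sum over $n$ and reduces the theorem to computing $\lim_{X\to\infty}\mathbb E_{H(E)\le X}[a_n(E)]$ term by term.

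For each fixed $n$, the coefficient $a_n(E_{A,B})$ depends only on the reduction of $(A,B)$ modulo a sufficiently high power of $n$. A homogeneous-region lattice-point count in the height box, combined with an Ekedahl-type sieve for the global minimality condition (whose local densities $1-p^{-10}$ are exactly the normalizing factors in Definition~\ref{lp}), then produces
\[ \lim_{X\to\infty}\mathbb E_{H(E)\le X}[a_n(E)] \;=\; \prod_{p\mid n}\ell_{p,v_p(n)}. \]
To collapse the sum to one over $m$ with $n=m^2$, I would verify $\ell_{p,\nu}=0$ for $\nu$ odd using the involution $(A,B)\mapsto(d^2A,d^3B)$ with $d\in\mathbb Z_p^{\times}$ a non-square unit, which preserves $\mathbb Z_p^2$ and the minimal subset while sending $E_{A,B}$ to its nontrivial quadratic twist over $\mathbb Q_p$; this flips the sign of $a_p(E)$ at every reduction type (good and split/non-split multiplicative, with additive and $a_p=0$ fixed), and since $U_\nu$ is odd for $\nu$ odd, it negates $a_{p^\nu}(E)$.

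Assembling these steps yields the right-hand side of \eqref{main-evaluation} after substituting $\Phi(m^2) = (2\pi/m)\int_0^\infty W(u)\sqrt u\, J_1(4\pi m\sqrt u)\,du$ and interchanging the $u$-integral with the sum over $m$, the latter justified by the Weil-type bound $|\ell_{p,\nu}|\le (\nu+1)p^{\nu/2}/(1-p^{-10})$ together with the rapid decay of $\Phi(m^2)$. The main obstacle is precisely the uniform-in-$E$ tail bound needed to exchange the $X\to\infty$ limit with the infinite sum over $n$; this goes through cleanly here thanks to Deligne and the smoothness of $W$, whereas the analogous sharp-cutoff assertion in Conjecture~\ref{main-conjecture} would demand genuinely new analytic input to control the arising Bessel sum.
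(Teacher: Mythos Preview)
Your overall strategy matches the paper's: Voronoi/functional equation to pass to the dual Bessel sum, rapid decay of the Bessel transform to push $\lim_{X}$ inside the $n$-sum, lattice-point equidistribution to evaluate $\lim_X\mathbb E[a_n(E)]=\prod_{p\mid n}\ell_{p,v_p(n)}$, and vanishing of $\ell_{p,\nu}$ for odd $\nu$ to substitute $n=m^2$. One small wrinkle: your twist involution $(A,B)\mapsto(d^2A,d^3B)$ with $d$ a non-square unit works cleanly for odd $p$ (any such $d$ gives the unramified quadratic extension, so the twist preserves reduction type and negates $a_p$), but at $p=2$ a non-square unit such as $d=-1$ or $d=3$ cuts out a \emph{ramified} extension, and twisting can then send good reduction to additive, so $a_2$ does not simply flip sign. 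You should take $d$ with $\mathbb Q_2(\sqrt d)/\mathbb Q_2$ unramified, e.g.\ $d\equiv 5\pmod 8$; then the argument goes through. The paper instead reads off $\ell_{p,\nu}=0$ for odd $\nu$ from its explicit formula (Lemma~\ref{all-lf}) in terms of level-$1$ cusp forms of weight $\nu+2$.

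The real gap is the last interchange of the $u$-integral with the $m$-sum. The rapid decay $\Phi(n)=O(n^{-A})$ comes from the oscillation of $J_1$ integrated against the smooth $W$; once you take absolute values inside, that cancellation disappears and $\int|W(u)|\sqrt u\,|J_1(4\pi m\sqrt u)|\,du$ is only $\asymp m^{-1/2}$. With the Weil bound $|\ell_{p,2v}|\le(2v+1)p^{v}$ one gets $\prod_{p\mid m}|\ell_{p,2v_p(m)}|\ll m^{1+\epsilon}$, and $\sum_m m^{\epsilon}\cdot m^{-1}\cdot m^{-1/2}$ diverges, so Fubini does not follow from the ingredients you cite. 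The paper supplies the missing input: by Lemma~\ref{all-lf}, for every $p\ne 3$ one has $\ell_{p,\nu}=0$ for all $0<\nu<10$ (there are no level-$1$ cusp forms of weight $<12$), so $\prod_{p\mid m}\ell_{p,2v_p(m)}\ne 0$ forces $p^5\mid m$ for every prime $p\ne 3$ dividing $m$. These $5$-powerful-away-from-$3$ integers are sparse enough that, combined with $|J_1|\ll m^{-1/2}$, the inner sum in \eqref{main-evaluation} converges absolutely and uniformly on compact subsets of $(0,\infty)$, which both makes the integrand well-defined and legitimizes the interchange. Your twist argument only gives vanishing for \emph{odd} $\nu$; you still need a reason why $\ell_{p,2},\ell_{p,4},\ell_{p,6},\ell_{p,8}$ vanish for $p\ne 3$, and for that some input equivalent to the Eichler--Selberg trace formula (as in Lemma~\ref{all-lf}) seems unavoidable.
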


Here the murmuration density in the sense of \cite{SarnakLetter} is $2\pi\sum_{m=1}^{\infty}  \frac{ \prod_{p \mid m } \ell_{p,{ 2v_p(m)}} }{m}   \sqrt{u} J_1( 4 \pi \sqrt{u }m )$.

\begin{figure}
\includegraphics[width=\textwidth]{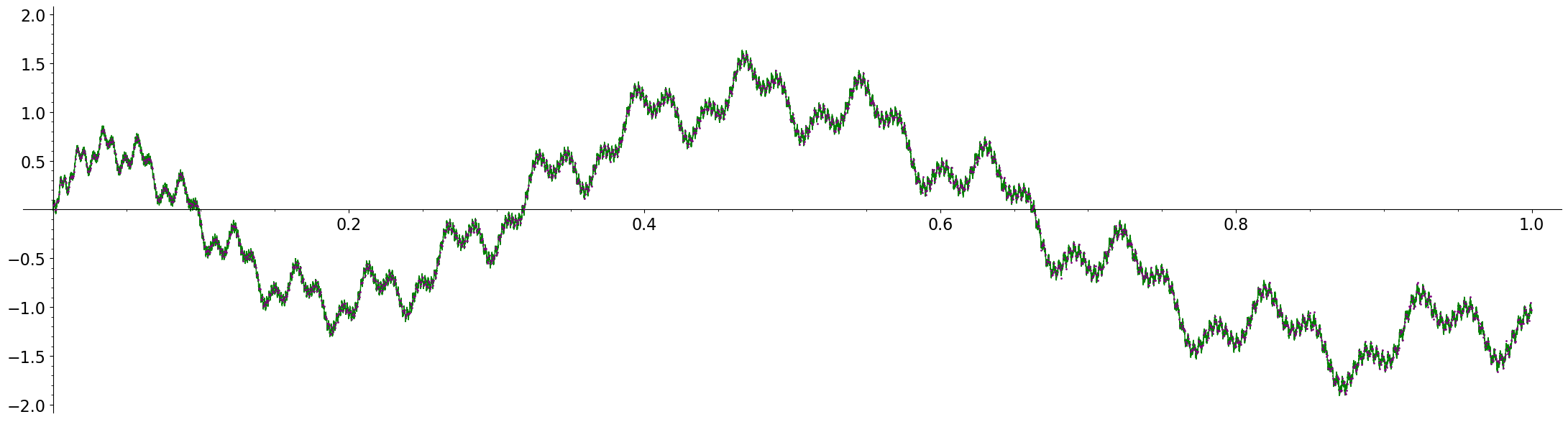}
\caption{Plot comparing truncated values of \eqref{main-limit} with $X=2^{28}$ (purple dots) and \eqref{main-evaluation} (green curve) truncated at $m\le B=10^5$ in Proposition~\ref{all} for normalized indicator functions $W_i(u)$ on $(\nicefrac{i}{2000},\nicefrac{(i+1)}{2000}) \subset [0,1]$ of area 1.}\label{fig:all}
\end{figure}

Second, we add the restriction that $p\nmid N(E)$ for $p\leq P$. A similar heuristic based on Cram\'er's random model for primes suggests that this case could model the murmuration density for a sum over elliptic curves, ordered by naive height, with prime conductor, or in other words that
\[ \lim_{X \to\infty}  \mathbb E_{ \substack{\{E:H(E)\le X\} \\ N(E) \textrm{ prime} }}  \Bigl[  \frac{ \log( N(E) ) }{  N(E)  }\sum_{p \textrm{ prime}} W \left( \frac{p}{ N(E)} \right) \epsilon(E) a_p (E)  \Bigr]\]
\[ \approx \lim_{P\to\infty}  \lim_{X \to\infty}  \mathbb E_{ \substack{\{E:H(E)\le X\} \\ p\nmid N(E)\textrm{ for } p \leq P}}  \Bigl[  \frac{\prod_{p \leq P} (1-1/p)^{-1}  }{  N(E)  }\!\!\!\sum_{\substack{ n \in \mathbb N \\ p \nmid n \textrm{ for } p \leq P}}\!\!\! W \left( \frac{n}{ N(E)} \right) \epsilon(E) a_n (E) \Bigr].\]

To handle this case, we define new local terms.

\begin{definition}\label{lptilde} Let $\tilde{\ell}_{p,\nu}$ be the integral of $a_{p^\nu}(E_{A,B})$ over the set of $A,B \in \mathbb Z_p^2$ with $p^4 \nmid A$ or $p^6 \nmid B$ and such that $E_{A,B}$ has good reduction at $p$, divided by the total measure of that set. \end{definition}

\begin{proposition}\label{primes}  Let $W$ be a smooth, compactly-supported function on $(0,\infty)$. The limit 
\[ \lim_{P\to\infty}  \lim_{X \to\infty}  \mathbb E_{ \substack{ \{E:H(E)\le X\} \\ p\nmid N(E)\textrm{ for } p \leq P}}  \Bigl[  \frac{\prod_{p \leq P} (1-1/p)^{-1}  }{  N(E)  }\sum_{\substack{ n \in \mathbb N \\ p \nmid n \textrm{ for } p \leq P}}\!\!\! W \left( \frac{n}{ N(E)} \right) \epsilon(E) a_n (E)  \Bigr]\]
exists and is equal to 
\[  \int_0^\infty \! W(u) 2\pi \sqrt{u}  \!\! \sum_{ \substack{ q \in \mathbb N \\ \emph{squarefree}}}   \sum_{m=1}^{\infty}  \frac{ \mu ( \gcd(m,q))}{q m\phi \left( \frac{q}{\gcd(m,q)}\right)} J_1\left( 4 \pi \frac{\sqrt{u }m}{q} \right) \prod_{p\mid m} \tilde{\ell}_{p,{2v_p(m)}}  du . \]
\end{proposition}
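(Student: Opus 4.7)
The plan is to adapt the proof of Theorem~\ref{prime}, imposing the additional local constraint $p\nmid N(E)$ for every prime $p\le P$. Because this constraint factors over primes and only restricts congruence classes modulo powers of primes $p\le P$, it is compatible with every step of the earlier argument: the Voronoi dualization and the interchange of the dual sum with the expectation and with $\lim_{X\to\infty}$ go through essentially verbatim, and the only substantive change is in the local-average computation at the primes $p\le P$.

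First, as in the proof of Theorem~\ref{prime}, I would expand the condition $p\nmid n$ for $p\le P$ using additive characters (equivalently, Möbius inversion on a $P$-smooth squarefree modulus). For each such modulus I would apply the Voronoi summation formula to $\sum_n W(n/N(E))\,a_n(E)$, converting it into a dual sum against $J_1$-Bessel functions whose arithmetic weights $\mu(\gcd(m,q))/(q\phi(q/\gcd(m,q)))$ arise from Ramanujan-sum evaluation of the inclusion-exclusion; the change of variables $n=m^2$ exploits the vanishing of the average of $a_n$ unless $n$ is a square. The rapid decay of $J_1$ then justifies interchanging this dual sum with the expectation over $E$ and the limit $X\to\infty$, as in Theorem~\ref{prime}.

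The heart of the proof is the local-average computation. For each fixed $n=m^2$, the expectation of $a_n(E)$ over $E$ with $H(E)\le X$ and $p\nmid N(E)$ for $p\le P$ factors as a product over primes of local integrals in $(A,B)\in\mathbb{Z}_p^2$ (modulo the minimality condition $p^4\nmid A$ or $p^6\nmid B$). At each prime $p\le P$, the added restriction cuts the integration domain down to the good-reduction locus, and by Definition~\ref{lptilde} the resulting local factor is exactly $\tilde\ell_{p,2v_p(m)}$; at primes $p>P$, the factors are as in Theorem~\ref{prime}. Passing to the limit $P\to\infty$, every prime contributes a $\tilde\ell$ factor; since $\tilde\ell_{p,0}=1$ by definition, primes $p\nmid m$ drop out and the product collapses to $\prod_{p\mid m}\tilde\ell_{p,2v_p(m)}$, matching the stated formula.

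The principal obstacle will be justifying the two limits and the interchanges uniformly in $P$. Absolute convergence of the double sum over squarefree $q$ and positive integers $m$ must hold uniformly in $P$, which requires combining the decay of $J_1(4\pi\sqrt{u}\,m/q)$ for large argument with Hasse-type bounds $|\tilde\ell_{p,\nu}|\le(\nu+1)p^{\nu/2}$ at good primes (and the analogous estimates for $\hat\ell_{p,\nu}$ and $\ell_{p,\nu}$ at intermediate stages for primes $p>P$). One must also check that the restricted average remains well-normalized as $P\to\infty$; this is essentially automatic because the density of curves with bad reduction at $p$ is $O(1/p)$, so the surviving density is bounded below by a positive constant and the normalizing factor $\prod_{p\le P}(1-1/p)^{-1}$ exactly compensates the Cramér-type thinning of the $n$-sum.
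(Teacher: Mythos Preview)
Your proposal is correct and follows essentially the same approach as the paper: additive-character expansion of the condition $p\nmid n$, Voronoi summation, Ramanujan-sum evaluation, factorization of the local average into $\tilde\ell$-factors at primes $\le P$ and $\ell$-factors at primes $>P$, uniform absolute convergence to pass to $P\to\infty$, and the substitution $n=m^2$. One organizational note: the paper actually proves Proposition~\ref{primes} \emph{before} Theorem~\ref{prime} (as a simpler warm-up, building directly on Proposition~\ref{all}), so the $q_g/q_m$ splitting and the $\hat\ell$-factors you allude to never arise here---the hypothesis $p\nmid N(E)$ for $p\le P$ forces $\gcd(q,N(E))=1$ automatically, collapsing that extra machinery.
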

In this case, the (predicted) murmuration density is
\[
2\pi\sqrt{u}\!\! \sum_{ \substack{ q \in \mathbb N \\ \textrm{squarefree}}}   \sum_{m=1}^{\infty}  \frac{ \mu ( \gcd(m,q))}{qm \phi \left( \frac{q}{\gcd(m,q)}\right)} J_1\left( 4 \pi \frac{\sqrt{u }m}{q} \right)\prod_{p\mid m } \tilde{\ell}_{p,{2v_p(m)}} .
\]

\begin{lemma}\label{primes-lf}[Lemma \ref{primes-lf-detailed}] Fix a prime $p$ and positive even integer $\nu$. If $p\neq3$ we have
\[ \tilde{\ell}_{p,\nu}  = - p^{-1} \Bigl(1 + \sum_{ \substack{ f \in S^{ \nu+2}_0 (\SL_2(\mathbb Z))\\ \emph{eigenform} \\ a_1(f)=1 }} a_p (f) \Bigr).\] 
If $p=3$ we have
\[ \tilde{\ell}_{3,\nu} =   \frac{ 3^{\frac{\nu}{2}}}{ 3(1 + 2 \cdot 3^{-10})} \Bigl(  U_\nu \left( \frac{3}{ 2 \sqrt{3} }\right) +2 U_\nu \left( 0\right)  +3^{-9} \Bigl(  U_\nu \left( \frac{2}{ 2 \sqrt{3} }\right)+U_\nu \left( \frac{1}{ 2 \sqrt{3} }\right)  \Bigr)\Bigr) . \]
\end{lemma}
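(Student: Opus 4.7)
The approach is to split the integral defining $\ell_{p,\nu}$ from Lemma~\ref{all-lf} according to the reduction type of $E_{A,B}$. For positive even $\nu$ the integrand equals $p^{\nu/2} U_\nu(a_p(E_{A,B})/(2\sqrt p))$ on good reduction, $(\pm 1)^\nu=1$ on multiplicative reduction, and $0$ on additive reduction, so
\[(1-p^{-10})\,\ell_{p,\nu} \;=\; m_{\mathrm{good}}(p)\,\tilde{\ell}_{p,\nu} \;+\; m_{\mathrm{mult}}(p),\]
where $m_{\mathrm{good}}(p)$ and $m_{\mathrm{mult}}(p)$ denote the $\mathbb Z_p^2$-measures of minimal pairs $(A,B)$ of the two reduction types. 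Solving for $\tilde\ell_{p,\nu}$ reduces the problem to the known value of $\ell_{p,\nu}$ together with these two local measures.

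For $p\ge 5$ the reduction type is detected by the discriminant mod $p$: good iff $p\nmid 4A^3+27B^2$, multiplicative iff $p\mid 4A^3+27B^2$ with $p\nmid A$ or $p\nmid B$. This gives $m_{\mathrm{good}}(p)=(p-1)/p$ and $m_{\mathrm{mult}}(p)=(p-1)/p^2$ (counting the $p-1$ nonzero solutions of $4\bar A^3+27\bar B^2\equiv 0$ in $\mathbb F_p^2$, bijectively parametrized by $s\mapsto(-3s^2,2s^3)$ from $\mathbb F_p^*$). Substituting Lemma~\ref{all-lf} and simplifying gives $\tilde\ell_{p,\nu}=-p^{-1}(1+\sum_f a_p(f))$. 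For $p=2$ the short form is never obviously good at $2$, so I would run Tate's algorithm case-by-case on residues $(A,B)$ modulo small powers of $2$ to establish $m_{\mathrm{good}}(2)=2^{-9}$ (matching $(1-2^{-10})\hat\ell_{2,0}$ from Lemma~\ref{prime-lf}) and $m_{\mathrm{mult}}(2)=2^{-10}$; the same algebra then gives the formula.

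The prime $p=3$ requires a separate approach, because in characteristic $3$ the short form $y^2=x^3+Ax+B$ always has vanishing Hasse invariant, hence defines a supersingular curve whenever it is non-singular. Thus the ``generic'' good-reduction stratum $3\nmid A$ (of $\mathbb Z_3^2$-measure $2/3$) contributes only $a_3\in\{0,\pm 3\}$. Direct point-counting on $\mathbb F_3$ (e.g.\ $(A,B)\equiv(2,2)\pmod 3$ has empty affine locus, hence $a_3=3$) shows the six mod-$3$ classes with $\bar A\neq 0$ split as four copies of $a_3=0$ and one copy each of $a_3=\pm 3$, producing (using $U_\nu(-x)=U_\nu(x)$) the main $U_\nu(\tfrac{3}{2\sqrt 3})+2U_\nu(0)$ piece. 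The remaining \emph{rare} strata consist of $(A,B)$ with $3\mid A$ that acquire good reduction only after Tate's algorithm; the resulting minimal model is no longer in short form and becomes ordinary, with $a_3\in\{\pm 1,\pm 2\}$. These strata are defined by congruence conditions on $(A,B)$ modulo higher powers of $3$, have total measure $4/3^{11}$, and contribute the $3^{-9}$ correction term. Summing and dividing by $m_{\mathrm{good}}(3)=(2/3)(1+2\cdot 3^{-10})$ yields the formula.

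The main obstacle is the Tate-algorithm bookkeeping at $p=2$ and especially $p=3$. For $p=3$ one must pin down precisely which $(A,B)$ with $3\mid A$ give good reduction, with which ordinary value $a_3\in\{\pm 1,\pm 2\}$, and with what measure modulo the appropriate power of $3$. This is a finite but delicate local computation; once assembled, the passage to $\tilde\ell_{p,\nu}$ is purely algebraic.
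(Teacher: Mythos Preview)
Your approach is correct and, for $p\neq 3$, genuinely different from the paper's. The paper computes $\tilde\ell_{p,\nu}$ directly: it invokes Lemma~\ref{lf-underlying} (the measure of the locus with reduction isomorphic to a fixed $E\in\overline{\mathcal M}_{1,1}(\mathbb F_p)$), sums over $E\in\mathcal M_{1,1}(\mathbb F_p)$ to get both numerator and denominator of the defining ratio, and then applies the Eichler--Selberg identity (Lemma~\ref{lf-modular}). You instead exploit the linear relation $(1-p^{-10})\ell_{p,\nu}=m_{\mathrm{good}}(p)\,\tilde\ell_{p,\nu}+m_{\mathrm{mult}}(p)$ and solve for $\tilde\ell_{p,\nu}$ using the already-established Lemma~\ref{all-lf}; this is slicker in that it recycles the trace-formula input rather than re-deriving it, at the cost of needing the two measures $m_{\mathrm{good}},m_{\mathrm{mult}}$ separately. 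For $p\geq 5$ your discriminant-mod-$p$ computation of these measures is clean; for $p=2$ your plan (Tate's algorithm, or equivalently reading off $(1-2^{-10})\hat\ell_{2,0}$ from Lemma~\ref{prime-lf}) recovers exactly the $2^{-8}$ factor that the paper packages into Lemma~\ref{lf-underlying}.

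For $p=3$ your route and the paper's are essentially the same computation described in different language: the paper's Lemma~\ref{lf-underlying} already encodes the supersingular/ordinary dichotomy (with the $3^{-9}$ weight on the ordinary locus) that you propose to extract by running Tate's algorithm on the strata with $3\mid A$. Your identification of the six $3\nmid A$ classes, the four ordinary twist pairs, and the total good measure $(2/3)(1+2\cdot 3^{-10})$ all match the paper's enumeration. You are also right that the linear-relation shortcut is unhelpful at $p=3$: plugging Lemma~\ref{all-lf} into it would leave a residual $\sum_f a_3(f)$ term rather than the pure Chebyshev expression, so the direct stratum-by-stratum count is unavoidable here.
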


\begin{figure}[!h]
\includegraphics[width=\textwidth]{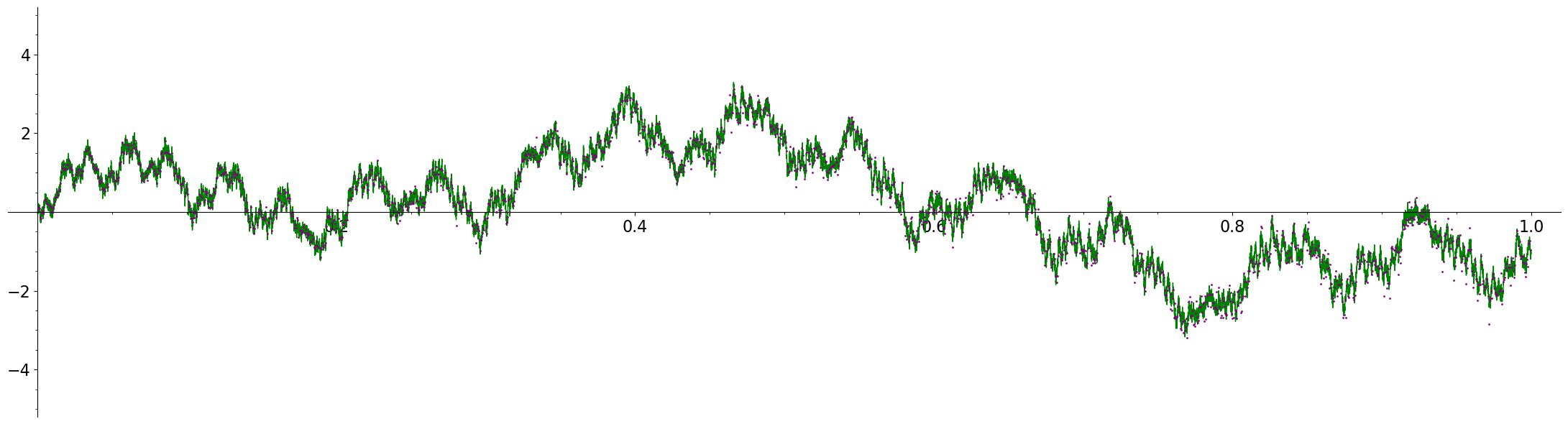}
\caption{Plot comparing truncated values of \eqref{main-limit} restricted to elliptic curves of prime conductor with height bounded by $X=2^{40}$ (purple dots) and the right-hand side of Proposition~\ref{primes} truncated at $q,m\le B=10^5$ (green curve) for normalized indicator functions $W_i(u)$ on $(\nicefrac{i}{2000},\nicefrac{(i+1)}{2000}) \subset [0,1]$ of area 1.}\label{fig:primes}
\end{figure}

\subsection{Ordering by conductor}\label{ss-conductor}

Finally, we discuss the possibility of applying a similar approach to elliptic curves ordered by conductor. The primary difficulty with this case is that the proofs of Theorem \ref{prime}, Proposition \ref{all}, and Proposition \ref{primes} all rely on computing averages over elliptic curves ordered by height. These include the average $\mathbb E_{  \{E:H(E)\le X\}} a_n(E)$ of the $n$th coefficient of the $L$-function, as well as more complicated averages including local factors at primes of bad reduction. To prove similar results about elliptic curves in the height ordering, it would be necessary to compute similar averages over elliptic curves of conductor less than $X$. However, such results are not known, and it does not seem possible to prove them at present levels of understanding.

Still, one can try to guess the behavior of averages in the conductor ordering such as $\mathbb E_{  \{E:N(E)\le X\}} a_n(E)$, prove analogues of Theorem \ref{prime}, Proposition \ref{all}, and Proposition \ref{primes} conditional on these guesses, and then use these to state an analogue of Conjecture \ref{main-conjecture}. While this approach relies on multiple heuristics, if supported by sufficient numerical evidence it could give a convincing explanation for murmurations in the conductor ordering and formula for the murmuration density. 

To show the potential viability of this approach, we give a prediction for the murmuration density of elliptic curves ordered by conductor, restricting to curves of prime conductor. This restriction simplifies things because, since all our elliptic curves have good reduction at all small primes, we do not have to guess the probability of bad reduction of a given type at a small prime. Since the reasoning is non-rigorous in multiple steps, we do not give any proofs and simply state a prediction, an informal justification, and give numerical evidence for it.

It seems likely that for real numbers $0<C_1<C_2$ we have 
\begin{equation}\label{prime-conductor-prediction} \begin{aligned} & \lim_{X \to \infty}  \mathbb  E_{ \{E:N(E)\le X, N(E) \textrm{prime}\} } \Bigl[  \frac{\log  ( N(E)  \frac{C_1+ C_2}{2}  ) }{ N(E) }\sum_{  \substack{ p \in (C_1 N(E) , C_2 N(E)] \\ p \textrm{ prime }}} \epsilon(E) a_p (E)  \Bigr] \\
 = &\int_{C_1}^{C_2}2\pi \sqrt{u}\!\!  \sum_{ \substack{ q \in \mathbb N \\ \emph{squarefree}}} \sum_{\substack{m \in \mathbb N }} \frac{ \mu(\gcd(m,q))}{q m \phi\left( \frac{q}{ \gcd(m,q) } \right) }  J_1  \left(4 \pi  \frac{\sqrt{u}m }{ q}  \right)  \prod_{p\mid m } \tilde{\ell}'_{p,{2v_p(m)}} \, du, \end{aligned}\end{equation}
where
\begin{equation}\label{lp-prime-conductor} \tilde{\ell}'_{p,\nu}  = - p^{-1} \Bigl(1 + \sum_{ \substack{ f \in S^{ \nu+2}_0 (\SL_2(\mathbb Z))\\ \emph{eigenform} \\ a_1(f)=1 }} a_p (f) \Bigr)\end{equation} for all $p$.
 
The prediction in \eqref{prime-conductor-prediction}  agrees with Conjecture \ref{main-conjecture}, except that we use the conductor ordering, restrict to curves of prime conductor, and use the local factors $\tilde{\ell}'_{p,\nu}$ instead of $\ell_{p,\nu}$ and $\hat{\ell}_{p,\nu}$. Note that the definition of $\tilde{\ell}'_{p,\nu}$ agrees with $\tilde{\ell}_{p,\nu}$ for all $p\neq 3$.
 
The motivation for this depends on the following observation used in the proofs of Proposition \ref{primes} and Lemma \ref{primes-lf}: Let $E_p$ be an elliptic curve over $\mathbb F_p$: Let $E$ be a random sample from the elliptic curves of height $<X$ whose conductors are not divisible by $p$. Then the probability that the reduction of $E$ mod $p$ is isomorphic to $E_p$ is $\frac{1}{p \abs{\operatorname{Aut}(E_p)}}$, unless $p=3$, in which case the probability is much larger for supersingular $E_p$ and much smaller for ordinary~$E_p$.
 
The reason that supersingular elliptic curves appear more frequently than ordinary elliptic curves at $3$ is because elliptic curves with good supersingular reduction modulo $3$ can be expressed by a short Weierstrass equation whose discriminant is not divisible by $3$, while elliptic curves with good ordinary reduction modulo $3$ can only be expressed by a short Weierstrass equation whose discriminant is divisible by $3^{12}$. Since the naive height bounds the discriminant of the short Weierstrass equation, elliptic curves with good supersingular reduction have larger naive heights and thus occur less frequently. This phenomenon is a consequence of the fact that naive height is defined using the short Weierstrass equation, and should not be expected to occur for intrinsic invariants like the conductor.
 
Thus, when we order by conductor, it is natural to guess that for random elliptic curves $E$ of prime conductor $<X$, the probability that the reduction of $E$ mod $p$ is isomorphic to $E_p$ is $ \frac{1}{p \abs{\operatorname{Aut}(E_p)}}$ for all $p$. This leads immediately to formula \eqref{lp-prime-conductor} for the average value of $a_{p^\nu}$ over elliptic curves of prime conductor, and then to \eqref{prime-conductor-prediction}.

Figure~\ref{fig:primescond} compares the two sides of the prediction in \eqref{prime-conductor-prediction} with $X=2^{30}$.

\begin{figure}[!h]
\includegraphics[width=\textwidth]{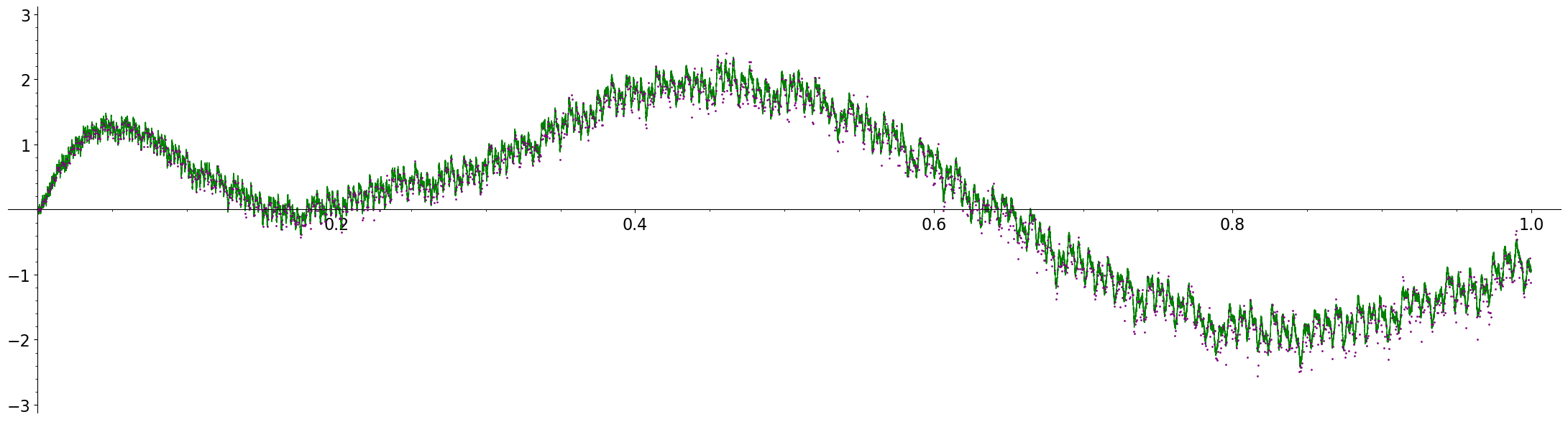}
\caption{Plot comparing truncated values of \eqref{prime-conductor-prediction} restricted to elliptic curves of prime conductor bounded by $X=2^{30}$ (purple dots), with the sums on the right-hand side restricted to $q,m\le B=10^5$ (green curve), for normalized indicator functions $W_i(u)$ on $(\nicefrac{i}{2000},\nicefrac{(i+1)}{2000}) \subset [0,1]$ of area 1.}\label{fig:primescond}
\end{figure}

\subsection{Acknowledgments} 

The authors would like to thank Peter Sarnak and Nina Zubrilina for helpful conversations. While working on this project the first author was supported by NSF grant DMS-2101491 and then by NSF grant DMS-2502029, and was a Sloan Research Fellow, and the second author was supported by Simons Foundation grant 550033.

\section{Proofs}

We will prove the main results in order of increasing difficulty, beginning with Proposition~\ref{all} followed by Proposition \ref{primes} and Theorem \ref{prime}, so that each can serve as a warmup to the next. After these are all established, we will prove Lemmas \ref{all-lf-detailed}, \ref{primes-lf-detailed}, and \ref{prime-lf-detailed}, giving formulas for the local factors.

We start with some preparatory lemmas.

\begin{lemma}\label{elliptic-Voronoi} Let $E$ be an elliptic curve over $\mathbb Q$, $N(E)$ the conductor of $E$, and $\epsilon(E)$ the epsilon factor of $E$. Let $q$ be a positive integer, $a$ an integer coprime to $q$, and $W \colon (0,\infty) \to \mathbb R$ a smooth compactly supported function. Let $\overline{a N(E)}$ denote the modular inverse of $aN(E)$ modulo~$q$. We have
\[  \frac{\epsilon(E)}{N(E)} \sum_{n=1}^\infty a_n(E) \sqrt{ \frac{n}{N(E)}} W \left( \frac{n}{N(E)}\right) e \left( \frac{an}{q} \right) \] \[ = \frac{  1}{q} \sum_{n=1}^{\infty} \frac{ a_n(E)}{\sqrt{n}} e \left( \frac{ \overline{a N(E)} n}{q}\right) \int_0^\infty 2\pi \sqrt{u} W(u)  J_1\left( 4 \pi  \frac{\sqrt{un}}{q}\right)du.\]
\end{lemma}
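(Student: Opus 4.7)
The plan is to derive this as the classical weight-$2$ Voronoi summation formula, specialized to the newform attached to $E$ by modularity, via Mellin transform plus functional equation. Let $\tilde W(s) = \int_0^\infty W(u) u^{s-1}\,du$, which is entire and of rapid decay on vertical strips, and let $L(E, s, a/q) \colonequals \sum_{n\geq 1} a_n(E) e(an/q) n^{-s}$ be the additively twisted $L$-function, convergent for $\operatorname{Re}(s) > 3/2$. Mellin inversion expresses the left-hand side as
\[
\epsilon(E) \cdot \frac{1}{2\pi i} \int_{(c)} \tilde W(s - 1/2)\, N(E)^{s-1/2}\, L(E, s, a/q)\, ds
\]
for $c > 3/2$, with no boundary contribution because $W$ is compactly supported in $(0,\infty)$.

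Next I would apply the functional equation for additive twists: for $(a, q) = 1$ and $(q, N(E)) = 1$ (implicit in the statement, since $\overline{aN(E)}$ must exist modulo $q$),
\[
\Bigl(\tfrac{q\sqrt{N(E)}}{2\pi}\Bigr)^s \Gamma(s)\, L(E, s, a/q) = \epsilon(E) \Bigl(\tfrac{q\sqrt{N(E)}}{2\pi}\Bigr)^{2-s} \Gamma(2-s)\, L(E, 2-s, \overline{aN(E)}/q).
\]
This follows from the modular transformation of the weight-$2$ newform $f$ attached to $E$ under a matrix $\gamma \in \SL_2(\mathbb{Z})$ with bottom row $(q,*)$ and suitable upper-left entry, combined with the Atkin--Lehner involution $w_{N(E)}$, under which $f$ has eigenvalue $-\epsilon(E)$. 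After substituting this identity into the contour integral and shifting the line of integration to $\operatorname{Re}(s) < 1/2$, no residues are crossed: $L(E, s, a/q)$ is entire, and the first pole of $\Gamma(2-s)$ is at $s=2$. On the new contour the series $L(E, 2-s, \overline{aN(E)}/q) = \sum_n a_n(E) e(\overline{aN(E)}n/q)n^{s-2}$ converges absolutely and can be interchanged with the integral.

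The last step is to recognize the resulting per-term Mellin--Barnes integral as a Bessel transform of $W$. After simplifying the prefactors and substituting $s \mapsto s+1/2$ so that the $\Gamma$ ratio becomes $\Gamma(3/2-s)/\Gamma(1/2+s)$, the factor multiplying $a_n(E)/\sqrt{n}\cdot e(\overline{aN(E)}n/q)$ takes the form
\[
\frac{q \sqrt{N(E)}}{2\pi n} \cdot \frac{1}{2\pi i} \int_{(\sigma)} \tilde W(s) \Bigl(\tfrac{4\pi^2 n}{q^2}\Bigr)^{\!s} \frac{\Gamma(3/2 - s)}{\Gamma(1/2 + s)}\, ds.
\]
The standard Mellin identity $\int_0^\infty J_1(4\pi \sqrt{un}/q)\, u^{s-1}\,du = (q^2 / (4\pi^2 n))^s \Gamma(s + 1/2)/\Gamma(3/2 - s)$, valid for $-1/2 < \operatorname{Re}(s) < 3/4$, then identifies this per-term factor as $\frac{\sqrt{N(E)}}{q}\int_0^\infty 2\pi W(u) J_1(4\pi\sqrt{un}/q)\, du$. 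Reassembling the sum yields exactly the claimed right-hand side.

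The main obstacle is the functional equation for additive twists with $(q, N(E))=1$, which requires careful bookkeeping of the sign of $\overline{aN(E)}$ modulo $q$ and of the epsilon factor. However, this identity is classical (see e.g.\ Iwaniec--Kowalski or Kowalski--Michel--VanderKam), and the entire lemma can be viewed as an instance of the standard weight-$2$ Voronoi summation formula for additively twisted newforms. All contour shifts are routinely justified by the rapid decay of $\tilde W$ on vertical strips, convexity bounds on $L(E, s, a/q)$ in the critical strip, and Stirling's estimate for the $\Gamma$-factors.
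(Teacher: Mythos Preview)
Your proposal is correct and amounts to a direct derivation of the weight-$2$ Voronoi summation formula via Mellin inversion, the functional equation of the additively twisted $L$-function, and recognition of the $J_1$ Bessel kernel. The paper takes the same route but at a higher level of abstraction: it simply invokes modularity and then cites the Voronoi formula as a black box from \cite[Lemma~2.21]{BFKMMS} (after noting a stray $i^{k_f}$ there), specializing $k_f=2$, $|r|=N=N(E)$, $\lambda_f(n)=a_n(E)/\sqrt{n}$. In other words, you have sketched precisely the proof of the lemma the paper cites. Your version is more self-contained and makes explicit where the $J_1$ kernel comes from (the Mellin--Barnes identity you state is correct), at the cost of the sign-bookkeeping you flag; the paper's version is shorter but delegates that bookkeeping to the reference. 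One small point worth making explicit, which you do note: the hypothesis $(q,N(E))=1$ is implicit in the statement (otherwise $\overline{aN(E)}$ is undefined), and it is exactly what is needed for the classical additive-twist functional equation you use.
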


\begin{proof}  By the modularity theorem \cite{BCDT} for elliptic curves over $\mathbb Q$,  $f= \sum_{n=1}^\infty a_n(E) q^{n}$ is a modular form of weight $2$, level $N(E)$, and root number $\epsilon(E)$.

We now apply the Voronoi summation formula to this modular form.  A form of the Voronoi summation formula convenient for us is stated in \cite[Lemma 2.21]{BFKMMS} except that the factor of $i^{k_f}$ in the third displayed equation is erroneous (it should have been absorbed into the global $\epsilon$ factor when specializing the statement of \cite[Theorem A.4]{KMVK}, since $i^{k_f}$ is the local $\epsilon$ factor at $\infty$ and \cite[Theorem A.4]{KMVK} writes the factor at $\infty$ separately from the factors at all finite places while \cite[Lemma 2.21]{BFKMMS} groups them into a global factor.) Hence we use \cite[Lemma 2.21]{BFKMMS} without the $i^{k_f}$ factor.

We now specialize \cite[Lemma 2.21]{BFKMMS}. We note that $\lambda_f(n) = \frac{a_n}{\sqrt{n}}$ we have $|r|=N(E)$, we take $N =N(E)$ also, we have $k_f=2$ since $f$ is holomorphic and has weight $2$. This gives

\[   \sum_{n=1}^\infty \frac{ a_n(E) }{ \sqrt{n}}  W \left( \frac{n}{N(E)}\right) e \left( \frac{an}{q} \right) \] \[ =  \epsilon(E) \frac{ \sqrt{N(E)}}{q} \sum_{n=1}^{\infty} \frac{ a_n(E)}{\sqrt{n}} e \left( \frac{ \overline{a N(E)} n}{q}\right) \int_0^\infty 2\pi W(u)  J_1\left( 4 \pi  \frac{\sqrt{un}}{q}\right) du.\]
We now replace $W(u)$ with $\sqrt{u} W(u)$ on both the left and right sides. On the left-hand side, this produces a factor of $\frac{ \sqrt{n}}{ \sqrt{N(E)}}$. We divide both sides by $\sqrt{N(E)}$, producing a factor of $\frac{1}{N(E)}$ on the left-hand side, and divide both sides by $\epsilon(E)$, which does not need to be inverted as $\epsilon(E)^2=1$. \end{proof}

\begin{lemma}\label{Hankel-decreasing} For any smooth compactly supported function $W\colon (0,\infty) \to \mathbb R$, natural number~$m$, and $n \in (0,\infty)$, we have $\int_0^{\infty}W(u) 2\pi  \sqrt{u} J_1( 4 \pi \sqrt{un } ) du = O_{m, W} ( n^{-\frac{m}{2}} ) $.\end{lemma}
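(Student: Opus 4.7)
The plan is to obtain the decay via $m$ successive integrations by parts, exploiting the Bessel recurrence $(x^\nu J_\nu(x))' = x^\nu J_{\nu-1}(x)$, which in scaled form with $a > 0$ reads $y^\nu J_{\nu-1}(ay) = a^{-1}\frac{d}{dy}(y^\nu J_\nu(ay))$.

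First I would substitute $u = y^2$ to rewrite the integral as $4\pi \int_0^\infty y^2 W(y^2) J_1(4\pi y\sqrt{n})\,dy$, and set $a = 4\pi\sqrt{n}$. Applying the recurrence with $\nu = 2$ replaces $y^2 J_1(ay)$ by $a^{-1}\frac{d}{dy}(y^2 J_2(ay))$. A single integration by parts---with boundary terms vanishing because $W$ is supported in a compact subset of $(0,\infty)$---yields a new integrand of the form $-2a^{-1} y^3 W'(y^2) J_2(ay)$, so one gains a factor of $n^{-1/2}$ at the cost of raising the power of $y$, the Bessel order, and the order of differentiation of $W$ each by $1$.

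Iterating this step $m$ times produces an integrand $c_m\,a^{-m} y^{m+2} W^{(m)}(y^2) J_{m+1}(ay)$ for an explicit constant $c_m$. Since $W^{(m)}$ is bounded and supported in a fixed compact subset of $(0,\infty)$ independent of $n$, and $|J_{m+1}(x)| \le 1$ for all $x \ge 0$, the remaining integral is $O_{W,m}(1)$ uniformly in $n$. Combined with the prefactor $a^{-m} = O(n^{-m/2})$, this gives the claimed bound.

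The only nontrivial bookkeeping is to verify inductively that, after $k$ steps, the integrand has the claimed shape $y^{k+2} W^{(k)}(y^2) J_{k+1}(ay)$ times a prefactor of size $a^{-k}$; this follows because each iteration differentiates $W(y^2)$ (producing a factor $2y$ that raises the power of $y$ by $1$) and advances the Bessel order by $1$ via the identity above, while boundary terms continue to vanish from the compact support of $W$. No substantive analytic obstacle arises; alternatively, one could deduce the same estimate from the general fact that the order-$1$ Hankel transform maps test functions on $(0,\infty)$ to Schwartz functions.
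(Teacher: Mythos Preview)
Your proof is correct and follows essentially the same approach as the paper: both arguments integrate by parts $m$ times using the Bessel recurrence $(x^\nu J_\nu)' = x^\nu J_{\nu-1}$ to push $m$ derivatives onto $W$, then bound the remaining integral using $|J_{m+1}|\le 1$ and the compact support of $W$. The only cosmetic difference is that the paper works directly in the variable $u$ via the iterated identity $tJ_1(t) = \bigl(\tfrac{d}{t\,dt}\bigr)^m\bigl(t^{m+1}J_{m+1}(t)\bigr)$, whereas you first substitute $u=y^2$ and carry out the induction step by step; these are equivalent packagings of the same computation.
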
 
The statement is a simpler version of \cite[Lemma 2.4]{BFKMM} and we use the same method of proof. \begin{proof} We have \cite[8.472.3]{ZMTables}  \[tJ_1(t) =  \left(  \frac{d}{t dt} \right)^m ( t^{m+1} J_{m+1}(t)),\] and setting $t= 4 \pi \sqrt{un}$ so that $\frac{d}{dt} =\frac{1}{2\pi} \sqrt{ \frac{u}{n}}  \frac{d}{du}$  we obtain  \[2 \pi \sqrt{u} J_1(4 \pi \sqrt{un} ) = \frac{1}{2\sqrt{n}}   \left(  \frac{1}{8\pi^2 n}  \frac{d}{du} \right)^m (4 \pi \sqrt{un})^{m+1} J_{m+1} \left( 4 \pi \sqrt{un} \right). \] Substituting in and integrating by parts $m$ times yields
\[  \int_0^{\infty}W(u)  2\pi \sqrt{u} J_1( 4 \pi \sqrt{un } ) du\] \[= \frac{1}{ 2\sqrt{n}}  \int_0^{\infty} W(u)   \left(  \frac{1}{ 8 \pi^2 n} \frac{d}{du} \right)^m \left( (4 \pi \sqrt{un})^{m+1} J_{m+1} ( 4 \pi \sqrt{un})  \right) du\] \[= \frac{ (-1)^m}{ 2 \sqrt{n}} \int_0^{\infty} \frac{d^m W(u)}{ du^m}   \frac{  (4 \pi \sqrt{un})^{m+1} }{ (8 \pi^2 n)^m }  J_{m+1} ( 4 \pi \sqrt{un} ) du  \] which since $J_{m+1}( t) = O_m(1)$ is 
\[ \ll_m  \frac{1}{\sqrt{n}}  \int_0^{\infty}  \abs{ \frac{d^m W(u)}{ du^m} }  \frac{  u^{ \frac{m+1}{2}} n^{\frac{m+1}{2}}}{ n^m}  du \ll n^{- \frac{m}{2}}, \] 
since $W$ is compactly supported so the integral over $u$ is bounded.\end{proof}

\begin{lemma}\label{congruence-counting} For a natural number $q$ and residue classes $a \bmod q^4, b \bmod q^6$, we have
\[\lim_{X\to\infty}  \mathbb E_{ \substack{ A, B \in \mathbb Z \\ \max ( 4 \abs{A}^3, 27 \abs{B}^2) \le X \\ p^4 \nmid A \textrm{ or } p^6\nmid B \textrm{ for all } p \\ 4 A^3+27 B^2 \neq 0 }}  [ 1 _{\substack{A \equiv a \bmod q^4\\ B \equiv b \bmod q^6}} ] =  \begin{cases} 0 & \textrm{if }p^4\mid a \textrm{ and }p^6 \mid b \textrm{ for some } p \mid q \\  \frac{1}{ q^{10} \prod_{p\mid q} (1- p^{-10} )} &\textrm{otherwise} \end{cases}.\]\end{lemma}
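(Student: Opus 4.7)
This is a lattice-point counting problem with a sieve for minimality. Both the numerator and the denominator of the expectation count pairs $(A,B) \in \mathbb{Z}^2$ in the box $|A| \le (X/4)^{1/3}$, $|B| \le (X/27)^{1/2}$, subject to minimality (``no prime $p$ satisfies both $p^4 \mid A$ and $p^6 \mid B$'') and to $4A^3 + 27B^2 \ne 0$; the numerator carries the additional congruences $A \equiv a \pmod{q^4}$, $B \equiv b \pmod{q^6}$. The singularity condition excludes only $O(X^{1/3})$ pairs, which is negligible against the leading order $X^{5/6}$, so I ignore it from the outset.

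I would first split into two cases. If some prime $p \mid q$ satisfies $p^4 \mid a$ and $p^6 \mid b$, then every $(A,B)$ in the congruence class is non-minimal at $p$; the only potentially minimal contributions come from $A = 0$ or $B = 0$ (total $O(X^{1/2})$), so the limit is $0$, as claimed. Otherwise, for every $p \mid q$ the congruence alone forces $p^4 \nmid A$ or $p^6 \nmid B$, so minimality at primes dividing $q$ is automatic and it remains only to sieve at primes coprime to $q$. By M\"obius inversion,
\[ \mathbf{1}_{\text{minimal}} = \sum_{\substack{d \ge 1,\ \mathrm{squarefree} \\ \gcd(d,q) = 1}} \mu(d)\, \mathbf{1}_{d^4 \mid A,\ d^6 \mid B}. \]
Since $\gcd(d,q) = 1$, the Chinese remainder theorem merges $A \equiv a \pmod{q^4}$ with $d^4 \mid A$ into a single congruence modulo $(qd)^4$, and similarly for $B$. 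An elementary lattice-point count gives $\frac{4 X^{5/6}}{4^{1/3}\sqrt{27}\,(qd)^{10}}$ plus an error $O\!\left(X^{1/3}/(qd)^4 + X^{1/2}/(qd)^6 + 1\right)$ per $d$. Summing against $\mu(d)$ gives a numerator main term proportional to $q^{-10}\prod_{p\nmid q}(1-p^{-10})$; the denominator (the case $q = 1$, no congruence) is proportional to $\prod_p (1-p^{-10}) = 1/\zeta(10)$, and the ratio collapses to the claimed $1/\bigl(q^{10}\prod_{p\mid q}(1-p^{-10})\bigr)$.

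The only subtlety is uniformity in $X$ of the M\"obius tail. For $d$ beyond a constant times $X^{1/12}$, the divisibility conditions force $A = 0$ or $B = 0$ and the total contribution is $O(X^{1/2})$; for smaller $d$ the per-summand error is absolutely summable in $d$. Thus the total error is $o(X^{5/6}/q^{10})$ as $X \to \infty$ for fixed $q$, and I do not anticipate any genuine obstacle beyond this routine bookkeeping.
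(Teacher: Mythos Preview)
Your approach is essentially the same as the paper's: M\"obius inclusion-exclusion over squarefree $d$ coprime to $q$, Chinese remainder to merge the congruence with $d^4\mid A$, $d^6\mid B$, then a lattice-point count in the box with the error summed over $d$. The paper truncates the sieve at $m<X^{1/6}$ and gives a slightly tighter bound $O(X^{1/6})$ for the singular locus (parametrizing $4A^3+27B^2=0$ by $(A,B)=(-3t^2,\pm 2t^3)$), but your $O(X^{1/3})$ is already negligible.

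One small confusion: in the trivial case where $p^4\mid a$ and $p^6\mid b$ for some $p\mid q$, there is no exception for $A=0$ or $B=0$; divisibility by $p^4$ and $p^6$ holds for $0$ as well, so every pair in the congruence class fails minimality and the numerator is exactly $0$, not merely $O(X^{1/2})$. This slip does not affect your conclusion.
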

\begin{proof}
It suffices to show that 
\begin{equation}\label{cc-sum} \sum_{ \substack{ A, B \in \mathbb Z \\ \max ( 4 \abs{A}^3, 27 \abs{B}^2) \le X \\ p^4 \nmid A \textrm{ or } p^6\nmid B \textrm{ for all } p \\ A \equiv a \bmod q^4 \\ B\equiv b \bmod q^6 \\ 4 A^3 + 27B^2 \neq 0  }}\!\!\!\!\!\!\!\!\!\! 1  =  \begin{cases} 0 & \textrm{if }p^4\mid a \textrm{ and }p^6 \mid b \textrm{ for some } p \mid q \\ (1+o(1))   \frac{ 2 (X/4)^{1/3} \cdot 2 (X/27)^{1/2}  \prod_{p\nmid q} (1-p^{-10}) }{ q^{10} } &\textrm{otherwise}\end{cases}, \end{equation}
since dividing \eqref{cc-sum} by the special case $q=1$ of \eqref{cc-sum} gives the desired statement. Furthermore, we may ignore the condition $4A^3 + 27B^2 \neq 0$ because the number of solutions to this equation is $O(X^{1/6})$ and hence can be absorbed into the error term. 

Note that \eqref{cc-sum} is trivial if $p^4 \mid a$ and $p^6 \mid b$ for some $p\mid q$ as the sum on the left-hand side of \eqref{cc-sum} is empty. Otherwise, inclusion-exclusion and the fact that we need only check the $p^4 \nmid A, p^6 \nmid B$ condition on $p$ relatively prime to $q$ give 
\[ \sum_{ \substack{ A, B \in \mathbb Z \\ \max ( 4 \abs{A}^3, 27 \abs{B}^2) \le X \\ p^4 \nmid A \textrm{ or } p^6\nmid B \textrm{ for all } p \\ A \equiv a \bmod q^4 \\ B\equiv b \bmod q^6  }}  1 = \sum_{ \substack{ m < X^{1/6} \\ \gcd(m,q)=1}  }\mu(m) \sum_{ \substack{ A, B \in \mathbb Z \\ \max ( 4 \abs{A}^3, 27 \abs{B}^2) \le X \\ m^4 \mid A \\ m^6 \mid B \\ A \equiv a \bmod q^4 \\ B\equiv b \bmod q^6  } } 1\]
and we have 
\[  \sum_{ \substack{ A, B \in \mathbb Z \\ \max ( 4 \abs{A}^3, 27 \abs{B}^2) \le X \\ m^4 \mid A \\ m^6 \mid B \\ A \equiv a \bmod q^4 \\ B\equiv b \bmod q^6  } } 1 = \frac{ 2 (X/4)^{1/3} \cdot 2 (X/27)^{1/2}}{ m^{10 }q^{10}}  +  O \left( \frac{ X^{1/2}}{m^6 q^6} \right), \] since it is the product of the number of values of $A$ in an arithmetic progression of index $(mq)^4$ inside an interval of length $2 (X/4)^{1/3}$ with the number of values of $B$ in an arithmetic progression of index $(mq)^6$ inside an interval of length $2(X/27)^{1/2}$. By summing over $m$ we obtain \eqref{cc-sum}. \end{proof}

\begin{proof}[Proof of Proposition \ref{all} ] We begin by applying the Voronoi summation formula (Lemma \ref{elliptic-Voronoi}), taking $q=1$ and $a$ an arbitrary integer, which causes the $e\left(\frac{an}{q}\right)$ and $e\left(\frac{ \overline{a N(E)}n}{q}\right)$ terms to drop out. This yields
\[ \frac{1 }{  N(E)  }\sum_{n=1}^{\infty}W \left( \frac{n}{ N(E)} \right) \epsilon(E) a_n (E)  \]
\begin{equation}\label{key-n-sum} = \sum_{n=1}^{\infty}  \frac{a_n (E) }{\sqrt{n}}   \int_0^{\infty}W(u)  2\pi \sqrt{u} J_1( 4 \pi \sqrt{un } ) du .\end{equation} 
 Taking $m=3$ in Lemma \ref{Hankel-decreasing} and using $\frac{a_n (E) }{\sqrt{n}} \ll  n^{\epsilon}$ uniformly in $E$, we see that the sum \eqref{key-n-sum} is absolutely convergent in $n$ uniformly in $E$. This implies
\[ \lim_{X \to\infty}  \mathbb E_{ \{E:H(E)\le X\} }  \Bigl[  \frac{1 }{  N(E)  }\sum_{n=1}^{\infty}W \left( \frac{n}{ N(E)} \right) \epsilon(E) a_n (E)  \Bigr]\]
\[= \lim_{X \to\infty}  \mathbb E_{ \{E:H(E)\le X\} }  \Bigl[\sum_{n=1}^{\infty}  \frac{a_n(E)}{\sqrt{n}}   \int_0^{\infty}W(u)  2\pi \sqrt{u} J_1( 4 \pi \sqrt{un } ) du\Bigr] \]
\[ =  \lim_{X \to\infty} \sum_{n=1}^{\infty} \frac{1}{\sqrt{n}}  \int_0^{\infty}W(u)  2\pi \sqrt{u} J_1( 4 \pi \sqrt{un } ) du \mathbb E_{ \{E:H(E)\le X\} }  \left[ a_n (E)  \right] \]
\[ =   \sum_{n=1}^{\infty}  \frac{1}{\sqrt{n}} \int_0^{\infty}W(u)  2\pi \sqrt{u} J_1( 4 \pi \sqrt{un } ) du \lim_{X \to\infty}  \mathbb E_{ \{E:H(E)\le X\} }  \left[ a _n (E) \right], \]
since the absolute convergence is preserved under averaging and then lets us commute the sum with the limit. We now focus on computing the inner expectation \[ \lim_{X \to\infty}  \mathbb E_{ \{E:H(E)\le X\} }  \left[ a _n (E) \right]= \lim_{X \to\infty}  \mathbb E_{ \substack{A, B\in \mathbb Z \\ \max (4 \abs{A}^3, 27 \abs{B}^2) \le X\\ p^4 \nmid A \textrm{ or } p^6 \nmid B \textrm{ for all } p  \\ 4 A^3+27 B^2 \neq 0 } } \left[ a _n (E_{A,B}) \right] ,\]
 since each elliptic curve $E$ may be expressed uniquely as $E_{A,B}$ with $p^4 \nmid A$ or $p^6\nmid B$ for all~$p$, and by definition $H(E) =  \max (4 \abs{A}^3, 27 \abs{B}^2)$. 

We have $a_n(E) = \prod_{p \mid n } a_{p^{ v_p(n)}} (E_{A,B})$ and each $ a_{p^{ v_p(n)}} (E_{A,B})$ is a $p$-adically continuous function of $A,B$ that takes finitely many values and thus can be written as a linear combination of indicator functions of congruence classes modulo powers of $p$, so $a_n$ may be written as a linear combination of indicator functions of congruence classes modulo natural numbers.   We apply Lemma \ref{congruence-counting} and observe that summing such a function over congruence classes $a$ mod $q^4$ and $b$ mod $q^6$ with $p^4 \nmid a$ or $p^6\nmid b$  and then dividing by $q^{10} \prod_{p\mid q} (1-p^{-10})$ is equivalent to integrating the function on $\prod_{p} \mathbb Z_p^2 \setminus (p^4 \mathbb Z_p \times  p^6 \mathbb Z_p) $ against the product over $p$ of the uniform measure on $ \mathbb Z_p^2 \setminus (p^4 \mathbb Z_p \times  p^6 \mathbb Z_p) $ with total mass $1$, so that
\[\lim_{X\to\infty}  \mathbb E_{ \substack{ A, B \in \mathbb Z \\ \max ( 4 \abs{A}^3, 27 \abs{B}^2) \le X \\ p^4 \nmid A \textrm{ or } p^6\nmid B \textrm{ for all } p}}  \left[ a _n (E_{A,B} )   \right] = \int_{ \prod_p \mathbb Z_p^2 \setminus (p^4 \mathbb Z_p \times  p^6 \mathbb Z_p)  }  \prod_{p \mid n } a_{p^{ v_p(n)}} (E_{A,B}) = \prod_{p \mid n } \ell_{p,{v_p(n)}}, \] since the uniform measure on $\mathbb Z_p^2 \setminus (p^4 \mathbb Z_p \times  p^6 \mathbb Z_p) $ with total mass $1$ is $(1-p^{-10})$ times the restriction of the uniform measure on $\mathbb Z_p^2$ with total mass $1$. This implies the limit \eqref{main-limit} exists and is equal to 
\begin{equation}\label{main-alternate}   \sum_{n=1}^{\infty}  \frac{ \prod_{p \mid n } \ell_{p,{v_p(n)}} }{\sqrt{n}} \int_0^{\infty}W(u)  2\pi \sqrt{u} J_1( 4 \pi \sqrt{un } ) du.  \end{equation}

We next demonstrate that the sum  $\sum_{n=1}^{\infty}  \frac{ \prod_{p \mid n } \ell_{p,{ v_p(n)}} }{\sqrt{n}}   \sqrt{u} J_1( 4 \pi \sqrt{un } ) $ is absolutely convergent uniformly for $u$ in a compact interval in $(0,\infty)$. We first observe that $a_{p^\nu} \ll (p^\nu)^{\frac{1}{2}+\epsilon}$ so that $\ell_{p,\nu} \ll (p^\nu)^{\frac{1}{2}+\epsilon}$ and thus $ \frac{ \prod_{p \mid n } \ell_{p,{ v_p(n)}} }{\sqrt{n}}  \ll n^\epsilon$.  We furthermore have $J_1( 4 \pi \sqrt{un } )  \ll (4 \pi \sqrt{un})^{-1/2} \ll n^{-1/4}$. Finally,  since the lowest-weight cusp form of level $1$ has weight $12$, Lemma \ref{all-lf} implies that a necessary condition for $\prod_{p \mid n } \ell_{p,{ v_p(n)}} \neq 0$ is that every prime, with the possible exception of $3$, dividing $n$ must divide $n$ with multiplicity at least $10$. Such $10$-powerful away from $3$ numbers have density $\ll n^{-9/10}$. Thus, we are summing a term of size $n^{ -\frac{1}{4}+\epsilon}$ over a set of density $\ll n^{-9/10}$, which is absolutely convergent since $\frac{9}{10} + \frac{1}{4} >1$. 

This absolute convergence allows us to exchange the sum and the integral, showing that~\eqref{main-alternate} is equal to 
\[    \int_0^{\infty}W(u) \sum_{n=1}^{\infty}  \frac{ \prod_{p \mid n } \ell_{p,{v_p(n)}} }{\sqrt{n}} 2\pi \sqrt{u} J_1( 4 \pi \sqrt{un } ) du.\]  Lemma \ref{all-lf} implies that $\ell_{p,{v_p(n)}}$ vanishes when $v_p(n)$ is odd, so the product vanishes unless $n$ is a perfect square. We thus introduce the change of variables $n=m^2$, producing \eqref{main-evaluation}. \end{proof}

\begin{proof}[Proof of Proposition \ref{primes}] We proceed similarly to the proof of Proposition \ref{all}. We start the analysis with the contribution of a single elliptic curve. We begin by introducing normalized Fourier coefficients, then using additive characters $e(x) = e^{ 2\pi i x}$ to detect the conditions $p \nmid n$, before applying the Voronoi summation formula (Lemma \ref{elliptic-Voronoi}) and rearranging.  We have
\[\frac{\prod_{p \leq P} (1-1/p)^{-1}  }{  N(E)  }\sum_{\substack{ n \in \mathbb N \\ p \nmid n \textrm{ for } p \leq P}} W \left( \frac{n}{ N(E)} \right) \epsilon(E) a_n (E)  \]
\[ = \frac{1}{ N(E) } \sum_{n=1}^{\infty}  \sum_{ \substack{ q \in \mathbb N \\  P\textrm{-smooth}}} \sum_{ a \in (\mathbb Z/q\mathbb Z)^\times}  \frac{\mu(q)}{\phi(q)} e \left( \frac{a n}{q} \right)W \left( \frac{n}{ N(E)} \right) a_n (E)  \]
\[ = \!\sum_{ \substack{ q \in \mathbb N \\P\textrm{-smooth}}}  \frac{\mu(q)}{\phi(q)}\sum_{ a \in (\mathbb Z/q\mathbb Z)^\times}  \frac{1}{  N(E) } \sum_{n=1}^{\infty}  e \left( \frac{a n}{q} \right)W \left( \frac{n}{ N(E)} \right) \epsilon(E) a_n(E)\]
\[ = \sum_{ \substack{ q \in \mathbb N \\P\textrm{-smooth}}}  \frac{\mu(q)}{\phi(q)}\sum_{ a \in (\mathbb Z/q\mathbb Z)^\times} \frac{1}{q} \sum_{n=1}^{\infty} \frac{ a_n (E)}{\sqrt{n}} e \left( \frac{ \overline{aN(E)} n}{q} \right) \int_0^\infty  W(u) \sqrt{u}  2 \pi J_1\left( 4 \pi \frac{\sqrt{u n}}{q} \right) du \]
\[ =  \sum_{ \substack{ q \in \mathbb N \\P\textrm{-smooth}}}  \frac{\mu(q)}{q\phi(q)} \sum_{ a \in (\mathbb Z/q\mathbb Z)^\times}  \sum_{n=1}^{\infty} \frac{ a_n (E)}{\sqrt{n}} e \left( \frac{ a n}{q} \right) \int_0^\infty  2\pi W(u)  \sqrt{u} J_1\left( 4 \pi \frac{\sqrt{u n}}{q} \right)  du, \]
where $\overline{aN(E)}$ denotes the modular inverse of $aN(E)$ mod $q$, and in the last line we use the fact that $ a\mapsto \overline{aN(E)}$ is a bijection. For fixed $P$, only finitely many $q$ contribute to the sum over $q$ since they must be squarefree and $P$-smooth. The sum over $a$ is finite for each $q$. Thus absolute convergence of this sum follows from absolute convergence of the inner sum over $n$, which follows from Lemma \ref{Hankel-decreasing} with $m=3$. This lets us insert the evaluation of Ramanujan's sum in the squarefree case
\[ \sum_{ a \in (\mathbb Z/q\mathbb Z)^\times}  e \left( \frac{ a n}{q} \right) = \mu \left( \frac{q}{\gcd(n,q)}\right)  \phi( \gcd(n,q) ) \] to obtain
\[\frac{\prod_{p \leq P} (1-1/p)^{-1}  }{  N(E)  }\sum_{\substack{ n \in \mathbb N \\ p \nmid n \textrm{ for } p \leq P}} W \left( \frac{n}{ N(E)} \right) \epsilon(E) a_n (E)  \]
\[ = \sum_{ \substack{ q \in \mathbb N \\P\textrm{-smooth}\\ \textrm{squarefree}}}   \sum_{n=1}^{\infty} \frac{ \mu ( \gcd(n,q))}{q \phi \left( \frac{q}{\gcd(n,q)}\right)} \frac{ a_n (E)}{\sqrt{n}} \int_0^\infty  2\pi W(u) \sqrt{u}  J_1\left( 4 \pi \frac{\sqrt{u n}}{q} \right)  du,\]
which preserves the absolute convergence so we may write
\[ \lim_{X\to\infty}  \mathbb E_{ \substack{ \{E:H(E)\le X\} \\ p\nmid N(E)\textrm{ for } p \leq P}}  \Bigl[  \frac{\prod_{p \leq P} (1-1/p)^{-1}  }{  N(E)  }\sum_{\substack{ n \in \mathbb N \\ p \nmid n \textrm{ for } p \leq P}} W \left( \frac{n}{ N(E)} \right) \epsilon(E) a_n (E)  \Bigr]\]
\[ = \lim_{X\to\infty}  \mathbb E_{ \substack{ \{E:H(E)\le X\} \\ p\nmid N(E)\textrm{ for } p \leq P}}  \Bigl[\!\sum_{ \substack{ q \in \mathbb N \\P\textrm{-smooth}\\ \textrm{squarefree} } }\!\!\sum_{n=1}^{\infty}   \frac{ \mu ( \gcd(n,q))}{q \phi \left( \frac{q}{\gcd(n,q)}\right)}\frac{ a_n (E)}{\sqrt{n}} \int_0^\infty  2\pi W(u)  \sqrt{u} J_1\!\left( 4 \pi \frac{\sqrt{u n}}{q} \right)  du \Bigr]\]
\[= \lim_{X\to\infty}\!\!\!\! \sum_{ \substack{ q \in \mathbb N \\P\textrm{-smooth}\\ \textrm{squarefree}}}\!\!\!  \sum_{n=1}^{\infty}   \frac{ \mu ( \gcd(n,q))}{q \phi \left( \frac{q}{\gcd(n,q)}\right)}\frac{1}{\sqrt{n}} \int_0^\infty\!\!  2\pi W(u) \sqrt{u}  J_1\!\left(\! 4 \pi \frac{\sqrt{u n}}{q} \right)\!du\ \mathbb E_{ \substack{ \{E:H(E)\le X\} \\ p\nmid N(E)\textrm{ for } p \leq P}} \!\! \left[ a_n (E)\right]\]
\[ = \sum_{ \substack{ q \in \mathbb N \\P\textrm{-smooth}\\ \textrm{squarefree}}} \sum_{n=1}^{\infty} \frac{ \mu ( \gcd(n,q))}{q \phi \left( \frac{q}{\gcd(n,q)}\right)\sqrt{n}} \int_0^\infty 2\pi W(u)  \sqrt{u} J_1\!\left(\! 4 \pi \frac{\sqrt{u n}}{q} \right) du  \lim_{X\to\infty} \mathbb E_{ \substack{ \{E:H(E)\le X\} \\ p\nmid N(E)\textrm{ for } p \leq P}} \left[ a_n (E)\right].\]
 We now evaluate  \[ \lim_{X\to\infty} \mathbb E_{ \substack{ \{E:H(E)\le X\} \\ p\nmid N(E)\textrm{ for } p \leq P}}  \left[ a_n (E)\right] = \lim_{X \to\infty}  \mathbb E_{ \substack{A, B\in \mathbb Z \\ \max (4 \abs{A}^3, 27 \abs{B}^2) \le X\\ p^4 \nmid A \textrm{ or } p^6 \nmid B \textrm{ for all } p\\ 4 A^3+27 B^2 \neq 0 \\  p \nmid N(E_{A,B} )\textrm{ for }p\leq P  } } \left[ a _n (E_{A,B}) \right] ,\] This differs from the limit considered in Proposition \ref{all} only in the condition that $p\nmid N(E)$ for $p \leq P$, which amounts to a congruence condition modulo a product of primes $\leq P$. This has the effect of restricting the sum over congruence classes to the congruence classes with good reduction, and since the same restriction appears on the denominator, we obtain
\[ \lim_{X\to\infty} \mathbb E_{ \substack{ \{E:H(E)\le X\} \\ p\nmid N(E)\textrm{ for } p \leq P}}  \left[a_n (E)\right] = \prod_{\substack{ p \mid n \\ p \leq P}} \tilde{\ell}_{p,{v_p(n)}} \prod_{\substack{p \mid n\\ p > P}} \ell_{p,{v_p(n)}},\]
which implies
\begin{equation}\label{primes-P}\begin{aligned}
&\lim_{X\to\infty}  \mathbb E_{ \substack{ \{E:H(E)\le X\} \\ p\nmid N(E)\textrm{ for } p \leq P}}  \Bigl[  \frac{\prod_{p \leq P} (1-1/p)^{-1}  }{  N(E)  }\sum_{\substack{ n \in \mathbb N \\ p \nmid n \textrm{ for } p \leq P}} W \left( \frac{n}{ N(E)} \right) \epsilon(E) a_n (E)  \Bigr]\\
=&\sum_{ \substack{ q \in \mathbb N \\P\textrm{-smooth}\\ \textrm{squarefree}}}   \sum_{n=1}^{\infty} \frac{ \mu ( \gcd(n,q))}{q \phi \left( \frac{q}{\gcd(n,q)}\right)\sqrt{n}} \int_0^\infty  2\pi W(u)  \sqrt{u} J_1\left( 4 \pi \frac{\sqrt{u n}}{q} \right)  du  \prod_{\substack{ p \mid n \\ p \leq P}} \tilde{\ell}_{p,{v_p(n)}} \prod_{\substack{p \mid n\\ p > P}} \ell_{p,{v_p(n)}}.\end{aligned}\end{equation}
We next check that this sum and integral are together absolutely convergent uniformly in~$P$. We need to show that
\[\sum_{ \substack{q \in \mathbb N \\ \textrm{squarefree}}}\!\!\!\!  \sum_{n=1}^{\infty} \frac{1}{q \phi \left( \frac{q}{\gcd(n,q)}\right)\!\sqrt{n}}  \int_0^\infty \!\! 2\pi\abs{ W(u)} \sqrt{u}  \abs{J_1\!\left(\! 4 \pi \frac{\sqrt{u n}}{q} \right) } du\!   \prod_{p\mid n}\! \max\bigl( \abs{\ell_{p,{v_p(n)}}}, \abs{\tilde{\ell}_{p,{v_p(n)}}} \bigr)< \infty.\]
Inserting the bound $J_1\left( 4 \pi \frac{\sqrt{u n}}{q} \right) \ll \left( 4 \pi \frac{\sqrt{u n}}{q} \right)^{-1/2} $ and using the compact support of $W$, it suffices to prove that
\[\sum_{ \substack{q \in \mathbb N \\ \textrm{squarefree}}}  \sum_{n=1}^{\infty} \frac{1}{q \phi \left( \frac{q}{\gcd(n,q)}\right)\sqrt{n}}   \frac{ \sqrt{q}}{n^{\frac{1}{4}} } \prod_{p\mid n} \max\bigl( \abs{\ell_{p,{v_p(n)}}}, \abs{\tilde{\ell}_{p,{v_p(n)}}} \bigr)< \infty.\]
Every term is multiplicative in $q$ and $n$, so the sum splits as an Euler product, writing $q =\prod_p p^{d_p}$ and $n =\prod_p p^{\nu_p} $,
\[\sum_{ \substack{q \in \mathbb N \\ \textrm{squarefree}}}  \sum_{n=1}^{\infty} \frac{1}{q \phi \left( \frac{q}{\gcd(n,q)}\right)\sqrt{n}}   \frac{ \sqrt{q}}{n^{\frac{1}{4}}}  \prod_{p\mid n} \max\bigl( \abs{\ell_{p,{v_p(n)}}}, \abs{\tilde{\ell}_{p,{v_p(n)}}} \bigr)\]
\[ =\prod_{p \textrm{ prime} } \left( \sum_{d=0}^1 \sum_{\nu=0}^{\infty} \frac{1}{ p^{d}\phi \left( \frac{p^d}{\gcd(p^\nu, p^d)}\right) p^{\frac{\nu}{2}}  }\frac{ p^{\frac{d}{2}}}{ p^{\frac{\nu}{4}}} \max \bigl( \abs{\ell_{p,\nu}}, \abs{\tilde{\ell}_{p,\nu}} \bigr)\right)\]
\[ =\prod_{p \textrm{ prime} } \left( \sum_{d=0}^1 \sum_{\nu=0}^{\infty} \frac{1}{ p^{\frac{d}{2}} p^{\frac{\nu}{4}}  } \frac{ \max \bigl( \abs{\ell_{p,\nu}}, \abs{\tilde{\ell}_{p,\nu}} \bigr)}{ p^{\frac{\nu}{2}}}\begin{cases} \frac{1}{p-1} & \textrm{if } d=1 \textrm{ and } \nu=0 \\ 1 & \textrm{otherwise} \end{cases} \right).\]
To check that this product is finite, it suffices to check that each term in the product is finite and the terms are $1 + O ( p^{-3/2})$ for $p$ sufficiently large. Both $\ell_{p,\nu}$ and $\tilde{\ell}_{p,\nu}$ are bounded by  $(\nu+1) p^{ \frac{\nu}{2}}$ since they are averages of $a_{p^\nu}$ that satisfy that bound, and so the $p^{\frac{\nu}{4}}  $ in the denominator ensures the sum is finite.

The term $d=0,\nu=0$ contributes $1$, so it suffices to show the remaining terms are $O(p^{-3/2})$. The term $d=1,\nu=0$ contributes $\frac{1}{ \sqrt{p} (p-1)}= O( p^{-3/2})$. Beyond this, any term with $d=1$ is bounded by the corresponding term with $d=0$ so we may assume $d=0$. Then all terms with $\nu>6$ are handled by the $(\nu+1) p^{ \frac{\nu}{2}}$ bound for the local factors, so we may assume $\nu \leq 6$. In this range, since $p>3$ and there are no cusp forms of level $1$ and weight $\leq 8$, Lemma \ref{all-lf} gives $\ell_{p,\nu}=0$ and Lemma \ref{primes-lf} gives $\tilde{\ell}_{p,\nu} = \frac{ 1+ (-1)^\nu}{2p}$. In this case, \[\frac{1}{ p^{\frac{d}{2}} p^{\frac{\nu}{4}}  } \frac{ \max ( \abs{\ell_{p,\nu}}, \abs{\tilde{\ell}_{p,\nu}} )}{ p^{\frac{\nu}{2}}}= \frac{ 1+ (-1)^\nu} { 2 p^{ 1+ \frac{3\nu}{4}}} =O(p ^{- \frac{7}{4}}), \]  since $\nu \geq 1$, completing the proof of absolute convergence.

Using this absolute convergence, we may exchange the sum with the limit in $P$ and then rearrange, obtaining
\[ \lim_{P\to\infty} \lim_{X\to\infty}  \mathbb E_{ \substack{ \{E:H(E)\le X\} \\ p\nmid N(E)\textrm{ for } p \leq P}}  \Bigl[  \frac{\prod_{p \leq P} (1-1/p)^{-1}  }{  N(E)  }\sum_{\substack{ n \in \mathbb N \\ p \nmid n \textrm{ for } p \leq P}} W \left( \frac{n}{ N(E)} \right) \epsilon(E) a_n (E)  \Bigr]\]
\[=  \lim_{P \to \infty} \sum_{ \substack{ q \in \mathbb N \\P\textrm{-smooth}\\ \textrm{squarefree}}}   \sum_{n=1}^{\infty} \frac{ \mu ( \gcd(n,q))}{q \phi \left( \frac{q}{\gcd(n,q)}\right)\sqrt{n}} \int_0^\infty  2\pi W(u) \sqrt{u}  J_1\left( 4 \pi \frac{\sqrt{u n}}{q} \right)  du  \prod_{\substack{ p \mid n \\ p \leq P}} \tilde{\ell}_{p,{v_p(n)}} \prod_{\substack{p \mid n\\ p > P}} \ell_{p,{v_p(n)}}\]
\[= \sum_{ \substack{ q \in \mathbb N \\ \textrm{squarefree}}}   \sum_{n=1}^{\infty} \frac{ \mu ( \gcd(n,q))}{q \phi \left( \frac{q}{\gcd(n,q)}\right)\sqrt{n}} \int_0^\infty  2\pi W(u)\sqrt{u}  J_1\left( 4 \pi \frac{\sqrt{u n}}{q} \right)  du   \lim_{P \to \infty} \prod_{\substack{ p \mid n \\ p \leq P}} \tilde{\ell}_{p,{v_p(n)}} \prod_{\substack{p \mid n\\ p > P}} \ell_{p,{v_p(n)}}\]
\[=\sum_{ \substack{ q \in \mathbb N \\ \textrm{squarefree}}}   \sum_{n=1}^{\infty} \frac{ \mu ( \gcd(n,q))}{q \phi \left( \frac{q}{\gcd(n,q)}\right)\sqrt{n}} \int_0^\infty  2\pi W(u)  \sqrt{u} J_1\left( 4 \pi \frac{\sqrt{u n}}{q} \right)  du    \prod_{p\mid n } \tilde{\ell}_{p,{v_p(n)}} \]
\[ = \int_0^\infty W(u)  \sqrt{u}\Biggl( 2\pi \sum_{ \substack{ q \in \mathbb N \\ \textrm{squarefree}}}   \sum_{n=1}^{\infty}  \frac{ \mu ( \gcd(n,q))}{q \phi \left( \frac{q}{\gcd(n,q)}\right)} \frac{\prod_{p\mid n } \tilde{\ell}_{p,{v_p(n)}}}{\sqrt{n}} J_1\left( 4 \pi \frac{\sqrt{u n}}{q} \right) \Biggr) du . \]
Lemma \ref{all-lf} implies that $\tilde{\ell}_{p,{v_p(n)}}$ vanishes when $v_p(n)$ is odd, so the product vanishes unless~$n$ is a perfect square. We thus introduce the change of variables $n=m^2$, giving the statement of Proposition~\ref{primes}.
\end{proof}

\begin{remark} An argument almost identical to the first part of the above proof shows that for any finite set $S$ of primes we have
\[ \lim_{X\to\infty}  \mathbb E_{ \substack{ \{E:H(E)\le X\} \\ p\nmid N(E)\textrm{ for } p \in S}}  \Bigl[  \frac{\prod_{p \in S} (1-1/p)^{-1}  }{  N(E)  }\sum_{\substack{ n \in \mathbb N \\ p \nmid n \textrm{ for } p\in S}} W \left( \frac{n}{ N(E)} \right) \epsilon(E) a_n (E) \Bigr]\]
\[ =\sum_{ \substack{ q \in \mathbb N \\ p \nmid q \textrm{ for } p \notin S\\ \textrm{squarefree}}}   \sum_{n=1}^{\infty} \frac{ \mu ( \gcd(n,q))}{q \phi \left( \frac{q}{\gcd(n,q)}\right)\sqrt{n}} \int_0^\infty  2\pi W(u)  \sqrt{u} J_1\left( 4 \pi \frac{\sqrt{u n}}{q} \right)  du  \prod_{\substack{ p \mid n \\ p\in S}} \tilde{\ell}_{p,{v_p(n)}} \prod_{\substack{p \mid n\\ p \notin S}} \ell_{p,{v_p(n)}}.\] \end{remark}

Before proving Theorem \ref{prime}, we check that the sum appearing in it is absolutely convergent. 

\begin{lemma}\label{prime-absolute-convergence} The sum \[ \sum_{\substack{ q\in\mathbb N \\ \emph{squarefree} }} \sum_{\substack{m \in \mathbb N }} \frac{ \mu(\gcd(m,q))}{q m \phi\left( \frac{q}{ \gcd(m,q) } \right) }  J_1  \left(4 \pi  \frac{\sqrt{u}m }{ q}  \right)   \prod_{p\mid q} \hat{\ell}_{p,2 v_p(m)} \prod_{p\mid m, p\nmid q} \ell_{p,{2v_p(m)}} \ \] is absolutely convergent, with the convergence uniform in $u$ as long as $u$ is bounded away from $0$. \end{lemma}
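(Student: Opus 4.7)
The plan is to adapt the absolute convergence arguments inside the proofs of Propositions~\ref{all} and~\ref{primes}: bound the Bessel factor uniformly so that the absolute value of the summand becomes multiplicative in the prime factorizations of $q$ and $m$, and then analyze the resulting Euler product over primes.

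First I would apply the uniform estimate $|J_1(x)| \ll x^{-1/2}$, which holds for all $x>0$ (combining the standard asymptotic for large $x$ with the global bound $|J_1|\le 1$ and the fact that $x^{-1/2}\ge 1$ when $x\le 1$). Applied to $x = 4\pi\sqrt{u}m/q$ this yields $|J_1(4\pi\sqrt{u}m/q)|\ll_{u_0}(q/m)^{1/2}$ uniformly for $u\ge u_0>0$, so the absolute value of the series is at most a constant times
\[ \sum_{\substack{q\in\mathbb N\\ \emph{squarefree}}}\sum_{m=1}^{\infty}\frac{1}{q^{1/2}m^{3/2}\phi(q/\gcd(m,q))}\prod_{p\mid q}|\hat{\ell}_{p,2v_p(m)}|\prod_{p\mid m,\ p\nmid q}|\ell_{p,2v_p(m)}|. \]

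Next I would observe that this bound is multiplicative in the joint prime factorizations of $q$ and $m$, so the sum factors as an Euler product $\prod_p L_p$, where the four cases $(v_p(q),v_p(m))=(0,0),(0,\nu\geq 1),(1,0),(1,\nu\geq 1)$ give
\[ L_p = 1 + \sum_{\nu\ge 1}\frac{|\ell_{p,2\nu}|}{p^{3\nu/2}} + \frac{|\hat{\ell}_{p,0}|}{p^{1/2}(p-1)} + \sum_{\nu\ge 1}\frac{|\hat{\ell}_{p,2\nu}|}{p^{(3\nu+1)/2}}. \]
The main step is to verify that each $L_p$ is finite and that $L_p = 1 + O(p^{-3/2})$ for large $p$, so the Euler product converges. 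For $p>3$, Lemma~\ref{all-lf} gives $\ell_{p,2\nu}=0$ for $1\le\nu\le 4$ (because $S_0^{2\nu+2}(\SL_2(\mathbb Z))=0$ in those weights), while the Deligne bound $|a_p(f)|\le 2p^{(k-1)/2}$ combined with $\dim S_0^{k}(\SL_2(\mathbb Z))\ll k$ yields $|\ell_{p,2\nu}|\ll \nu p^{\nu-1/2}$ for $\nu\ge 5$; hence the first sum is $O(p^{-3})$. The analogous analysis applied to Lemma~\ref{prime-lf} gives $|\hat{\ell}_{p,2\nu}|\ll 1+\nu p^{\nu-1/2}$, making the third sum $O(p^{-2})$ and the middle term $O(p^{-3/2})$. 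For $p\in\{2,3\}$ the Chebyshev contributions satisfy $|\ell_{p,2\nu}|,|\hat{\ell}_{p,2\nu}|\ll \nu p^{\nu}$ via $|U_\nu(t)|\le\nu+1$ for $|t|\le 1$, which is dominated by the $p^{-3\nu/2}$ denominator, so $L_2$ and $L_3$ are finite.

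I expect the principal obstacle to be cleanly bookkeeping the several local cases. In particular, the vanishing $\ell_{p,2\nu}=0$ at $p>3$ for $\nu\le 4$ is essential: without it, the trivial bound $|\ell_{p,2\nu}|\ll \nu p^{\nu-1/2}$ would only give $\sum_{\nu\ge 1}|\ell_{p,2\nu}|/p^{3\nu/2}=O(p^{-1/2})$, and the product $\prod_p L_p$ would diverge.
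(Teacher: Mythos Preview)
Your proposal is correct and follows essentially the same approach as the paper's own proof: bound $|J_1(x)|\ll x^{-1/2}$, factor the resulting bound as an Euler product $\prod_p L_p$ with exactly the local factor you wrote down, and then verify $L_p<\infty$ for all $p$ and $L_p=1+O(p^{-3/2})$ for $p>3$ using the vanishing $\ell_{p,2\nu}=0$ for $1\le\nu\le 4$ together with the Deligne bound on Hecke eigenvalues. One small point: your blanket estimate $|\hat\ell_{p,2\nu}|\ll 1+\nu p^{\nu-1/2}$ applied at $\nu=1$ only yields $O(p^{-3/2})$ for the third sum, not the $O(p^{-2})$ you claimed---the paper handles $\nu=1$ separately via the explicit formula $\hat\ell_{p,2}=O(1)$ to get $O(p^{-2})$---but $O(p^{-3/2})$ already suffices, so this does not affect the argument.
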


\begin{proof} Since $u$ is bounded away from $0$, we have \[ J_1\left( 4 \pi \frac{\sqrt{u} m}{q} \right) \ll \left( 4 \pi \frac{\sqrt{u} m}{q} \right)^{-1/2} \ll  \left( \frac{q}{m} \right)^{1/2}.\] Hence it suffices to establish that the sum
\[  \sum_{\substack{ q\in\mathbb N \\ \textrm{squarefree} }} \sum_{\substack{m \in \mathbb N }} \frac{ 1}{q m \phi\left( \frac{q}{ \gcd(m,q) } \right) } \left( \frac{q}{m} \right)^{1/2}  \prod_{p\mid q} \abs{ \hat{\ell}_{p,2 v_p(m)}} \prod_{p\mid m, p\nmid q} \abs{\ell_{p,{2v_p(m)}} }  \]
is finite. 
This splits as an Euler product, with Euler factor at $p$ given by
\[  \sum_{d=0}^1 \sum_{v=0}^\infty \frac{1}{ p^d p^{v} \phi \left( \frac{ p^d}{ \gcd(p^d,p^v)}\right)} \frac{ p^{d/2}}{p^{v/2}} \begin{cases} \abs{\hat{\ell}_{p, 2\nu}} & \textrm{if } d=1 \\ \abs{\ell_{p,2\nu}} & \textrm{if }d=0 \end{cases}\]
\[ = \sum_{v=0}^\infty \frac{1}{ p^{ \frac{3v}{2}}}  \abs{\ell_{p,2v}} +  \frac{1}{p^{\frac{1}{2}} (p-1)} \abs{\hat{\ell}_{p,0}} + \sum_{v=1}^\infty \frac{1}{p^{\frac{1}{2} + \frac{3v}{2}}} \abs{\hat{\ell}_{p, 2v}}.\]
It suffices to check that this Euler factor is finite for all $p$ and $1 + O(p^{-\frac{3}{2}})$ for large $p$, as then the Euler product is finite.

The definitions of $\ell_{p,2v}$ and $\hat{\ell}_{p,2v}$ as integrals of $a_{p^{2v}}$ ensure that each is $O( (v+1) p^{v} ) $ for fixed $p$ and the $p^{\frac{3v}{4}}$ in the denominator ensures the sum for each $p$ is finite. 

So we may assume $p>3$. We have $\ell_{p,0}=1$ so this term gives $1$, and we must check the remaining terms are $O(p^{-\frac{3}{2}})$. We have $\hat{\ell}_{p,0}=O(1)$, so $\frac{1}{p^{\frac{1}{2}} (p-1)} \abs{\hat{\ell}_{p,0}}= O (p^{-3/2})$. 

The formula for $\ell_{p,2v}$ in terms of modular forms gives $\ell_{p,2v} = O(  v p^{ v- \frac{1}{2} })$ for $v>0$ since the number of modular forms is $O(v)$, each contributes at most $p^{ v + \frac{1}{2} } $ to the sum, and the sum is then multiplied by a rational function of size $O(p^{-1})$. This shows that $\frac{1}{ p^{ \frac{3v}{2}}}  \abs{\ell_{p,2v}} = O (v p^{ - \frac{v+1}{2}  })$. Summing this over $v\geq 1$ and noting that $\ell_{p,2v}=0$ for $v<5$, we find that the terms in $\sum_{v=0}^\infty \frac{1}{ p^{ \frac{3v}{2}}}  \abs{\ell_{p,2v}}$  with $v\neq 0$ sum to $O (p^{-3})$.

We have $\abs{\hat{\ell}_{p,2}}= O(1)$, so the $v=1$ term of $ \sum_{v=1}^\infty \frac{1}{p^{\frac{1}{2} + \frac{3v}{2}}} \abs{\hat{\ell}_{p, 2v}}$ is $O(p^{-2})$. For $v>1$, we have $\hat{\ell}_{p,2v} =\ell_{p,2v} + O(1)$ and we saw that $\ell_{p,2v} = O ( v p^{ v- \frac{1}{2}} ) $, which since $1 \leq v  p^{v - \frac{1}{2}}$ gives $\hat{\ell}_{p,2v} =O ( v p^{v- \frac{1}{2}} )$. Summing $ \frac{1}{p^{\frac{1}{2} + \frac{3v}{2}}} O( v p^{v - \frac{1}{2}})$ over $v \geq 2$,  we obtain $O ( p^{-2})$, which is at most $O(p^{-\frac{3}{2}})$ as desired.\end{proof}

\begin{proof}[Proof of Theorem \ref{prime}]The argument is similar to Proposition \ref{primes}, but more complicated. We start the analysis with the contribution of a single elliptic curve $E$ of conductor $N$. We begin by introducing normalized Fourier coefficients. We use different strategies to detect the conditions $p \nmid n$ for primes of good reduction (i.e. $p \nmid N$), multiplicative reduction (i.e. $p\mid N$ but $p^2\nmid N$), and additive reduction (i.e. $p^2\mid N$). For primes of additive reduction, we observe that $a_n=0$ for $p\mid n$ already and nothing needs to be done. For primes of multiplicative reduction, we use a simple inclusion-exclusion, subtracting off multiples of $p$, and taking advantage of the relation $a_{np}=a_p a_n$ that holds for these primes. For primes of good reduction, we again use additive characters. (It would be possible to use additive characters to detect all these primes and then use a more intricate form of the Voronoi summation formula, but the approach taken here seems simpler.)  We have
\[\frac{\prod_{p \leq P} (1-1/p)^{-1}  }{  N  }\sum_{\substack{ n \in \mathbb N \\ p \nmid n \textrm{ for } p \leq P}} W \left( \frac{n}{ N} \right) \epsilon(E) a_n (E )  \]
\[ = \frac{\prod_{p \leq P} (1-1/p)^{-1}  }{  N }\sum_{\substack{ n \in \mathbb N \\ p \nmid n \textrm{ for } p \leq P \textrm{ such that } p^2\nmid N}} W \left( \frac{n}{ N} \right) \epsilon(E) a_n (E )  \]
\[ = \frac{\prod_{p \leq P} (1-1/p)^{-1}  }{ N } \sum_{ \substack{q_m \in \mathbb N \\ P\textrm{-smooth} \\ \textrm{squarefree} \\ \gcd( q_m^2, N)= q_m}}  \mu(q_m) \sum_{\substack{ n \in \mathbb N \\ p \nmid n \textrm{ for } p \leq P \textrm{ such that } p\nmid N \\ q_m \mid n }} W \left( \frac{n}{ N} \right)  \epsilon(E)a_n (E )   \]
\[ = \frac{\prod_{p \leq P} (1-1/p)^{-1}  }{ N}  \sum_{ \substack{q_m \in \mathbb N \\ P\textrm{-smooth} \\ \textrm{squarefree} \\ \gcd( q_m^2, N)= q_m}}  \mu(q_m)  \sum_{\substack{ n \in \mathbb N \\ p \nmid n \textrm{ for } p \leq P \textrm{ such that } p\nmid N  }} W \left( \frac{q_m n }{ N} \right)  \epsilon(E)  a_{n q_m}  (E )  \]
\[ = \frac{\prod_{p \leq P} (1-1/p)^{-1}  }{ N}  \sum_{ \substack{q_m \in \mathbb N \\ P\textrm{-smooth} \\ \textrm{squarefree} \\ \gcd( q_m^2, N)= q_m}}  \mu(q_m)  a_{q_m}(E) \sum_{\substack{ n \in \mathbb N \\ p \nmid n \textrm{ for } p \leq P \textrm{ such that } p\nmid N  }} W \left( \frac{q_m n }{ N} \right) \epsilon(E) a_{n}  (E )   \]
\[ = \frac{\prod_{\substack{ p \leq P \\ p \mid N } }(1-\nicefrac{1}{p})^{-1}  }{  N }\!\!\!\!\!\!\!\!  \sum_{\substack{ q_g\in\mathbb N \\ P\textrm{-smooth} \\ \textrm{squarefree} \\ \gcd( q_g, N)=1}}  \sum_{ \substack{q_m \in \mathbb N \\ P\textrm{-smooth} \\ \textrm{squarefree} \\ \gcd( q_m^2, N)= q_m}}\!\!\!\!\!\!\!\!\!\!   \frac{ \mu(q_g q_m)   a_{q_m}(E) }{\phi(q_g)} \!\!\!\!\!\! \sum_{ a \in (\nicefrac{\mathbb Z}{q_g\mathbb Z})^\times}  \sum_{\substack{ n \in \mathbb N   }}\! e\! \left( \frac{ a n}{ q_g} \right)\!\! W\!\! \left( \frac{q_m n }{ N} \right)\!\! \epsilon(E) a_n(E).   \]

Applying the Voronoi summation formula (Lemma~\ref{elliptic-Voronoi}) followed by a change of variables replacing $u$ with $q_m^{-1}  u$ yields
\begin{align*} \frac{1}{N}
\sum_{\substack{ n \in \mathbb N }} \ & e \left( \frac{ a n}{ q_g} \right) W \left( \frac{q_m n }{ N} \right)\epsilon(E)   a_{n}  (E )\\
&= \frac{1}{q_g}  \sum_{n=1}^{\infty} \frac{  a_{n}  (E )}{\sqrt{n}}  e\left( \frac{\overline{a N} n}{q_g} \right) \int_0^\infty 2 \pi  W( q_m u ) \sqrt{u} J_1 \left( \frac{ 4 \pi \sqrt{ u n }} { q_g} \right) du\\
&=    q_g^{-1} q_m^{-3/2} \sum_{n=1}^{\infty} \frac{  a_{n}  (E )}{\sqrt{n}}  e\left( \frac{\overline{a N} n}{q_g} \right) \int_0^\infty 2 \pi  W( u ) \sqrt{u} J_1 \left( \frac{ 4 \pi \sqrt{ u n }} {\sqrt{q_m}  q_g} \right) du.
\end{align*}
Writing $\hat{W} (y) = \int_0^\infty 2 \pi  W( u ) \sqrt{u} J_1 \left( 4 \pi \sqrt{ u y} \right) du $ and plugging this in, followed by a Ramanujan sum evaluation, a change of variables substituting $n$ with $q_m^{-1} n $, and the combination $ q = q_g q_m$, we obtain
\[\frac{\prod_{p \leq P} (1-1/p)^{-1}  }{  N  }\sum_{\substack{ n \in \mathbb N \\ p \nmid n \textrm{ for } p \leq P}} W \left( \frac{n}{ N} \right) \epsilon(E) a_n (E )  \]
\[=  \Bigl(\prod_{\substack{ p \leq P\\ p \mid N } } (1-1/p)^{-1}  \Bigr)\!\!\!\!\sum_{\substack{ q_g\in\mathbb N \\ P\textrm{-smooth} \\ \textrm{squarefree} \\ \gcd( q_g, N)=1}}  \sum_{ \substack{q_m \in \mathbb N \\ P\textrm{-smooth} \\ \textrm{squarefree} \\ \gcd( q_m^2, N)= q_m}}   \!\!\!\!\!\!\!\frac{ \mu(q_g q_m)   a_{q_m}(E) }{q_g q_m^{3/2} \phi(q_g)}\!\!  \sum_{ a \in (\mathbb Z/q_g\mathbb Z)^\times}  \sum_{\substack{ n \in \mathbb N   }}\frac{  a_{n}  (E )}{\sqrt{n}}  e\!\left( \frac{\overline{a N} n}{q_g} \right)\! \hat{W}\! \left( \frac{n}{ q_m q_g^2} \right) \]
\[=\Bigl(\prod_{\substack{ p \leq P\\ p \mid N } } (1-1/p)^{-1}  \Bigr)  \sum_{\substack{ q_g\in\mathbb N \\ P\textrm{-smooth} \\ \textrm{squarefree} \\ \gcd( q_g, N)=1}}  \sum_{ \substack{q_m \in \mathbb N \\ P\textrm{-smooth} \\ \textrm{squarefree} \\ \gcd( q_m^2, N)= q_m}}   \sum_{\substack{ n \in \mathbb N   }}  \frac{ \mu(\gcd(n,q_g) q_m)    }{q_g q_m^{3/2} \phi\left( \frac{q_g}{ \gcd(n,q_g) } \right) }  \frac{  a_{n q_m }  (E )}{\sqrt{n}}  \hat{W} \left( \frac{n}{ q_m q_g^2} \right) \]
\[= \Bigl(\prod_{\substack{ p \leq P\\ p \mid N } } (1-1/p)^{-1}  \Bigr) \sum_{\substack{ q_g\in\mathbb N \\ P\textrm{-smooth} \\ \textrm{squarefree} \\ \gcd( q_g, N)=1}}  \sum_{ \substack{q_m \in \mathbb N \\ P\textrm{-smooth} \\ \textrm{squarefree} \\ \gcd( q_m^2, N)= q_m}}   \sum_{\substack{ n \in \mathbb N \\ q_m \mid n   }}  \frac{ \mu(\gcd(n,q_g) q_m)    }{q_g q_m \phi\left( \frac{q_g}{ \gcd(n,q_g) } \right) }  \frac{  a_{n}  (E )}{\sqrt{n}}   \hat{W} \left( \frac{n}{ q_m^2 q_g^2} \right). \]
\[ =\Bigl(\prod_{\substack{ p \leq P\\ p \mid N } } (1-1/p)^{-1}  \Bigr)  \sum_{\substack{ q\in\mathbb N \\ P\textrm{-smooth} \\ \textrm{squarefree} \\ \gcd( q^2, N) \mid q }}   \sum_{\substack{ n \in \mathbb N \\ \gcd(q,N) \mid n   }}  \frac{ \mu(\gcd(n,q)  )  }{ q \phi\left( \frac{q}{ \gcd(n,q) } \right) }  \frac{  a_{n}  (E )}{\sqrt{n}}   \hat{W} \left( \frac{n}{ q^2} \right)\]
\[ = \sum_{\substack{ q\in\mathbb N \\ P\textrm{-smooth} \\ \textrm{squarefree} }} \sum_{\substack{n \in \mathbb N }} \frac{ \mu(\gcd(n,q))}{\sqrt{n}q  \phi\left( \frac{q}{ \gcd(n,q) } \right) }  \operatorname{ LT}_{n,q} (E)  \hat{W} \left( \frac{n}{ q^2} \right),\] where
\[ \operatorname{LT}_{n,q}(E) =  \begin{cases} \Bigl(\prod_{\substack{ p \leq P\\ p \mid N } } (1-1/p)^{-1}  \Bigr)  a_n(E) & \textrm{if }\gcd(q^2,N)\mid q \textrm{ and } \gcd(q,N) \mid n  \\ 0 & \textrm{otherwise} \end{cases}.\]
The point of this final expression is that $\operatorname{LT}_{n,q}(E)$ is the only part that depends on the elliptic curve~$E$.

We still have absolute convergence of the sum, so we may write
\[ \lim_{X\to\infty}  \mathbb E_{ \{E:H(E)\le X\} }  \Bigl[  \frac{\prod_{p \leq P} (1-1/p)^{-1}  }{  N(E)  }\sum_{\substack{ n \in \mathbb N \\ p \nmid n \textrm{ for } p \leq P}} W \left( \frac{n}{ N(E)} \right) \epsilon(E) a_n (E)  \Bigr]\]
\[ = \lim_{X\to\infty}  \mathbb E_{ \{E:H(E)\le X\} }  \Bigl[  \sum_{\substack{ q\in\mathbb N \\ P\textrm{-smooth} \\ \textrm{squarefree} }} \sum_{\substack{n \in \mathbb N }} \frac{ \mu(\gcd(n,q))}{q \sqrt{n} \phi\left( \frac{q}{ \gcd(n,q) } \right) }  \operatorname{ LT}_{n,q} (E)  \hat{W} \left( \frac{n}{ q^2} \right) \Bigr] \]
\[ =  \sum_{\substack{ q\in\mathbb N \\ P\textrm{-smooth} \\ \textrm{squarefree} }} \sum_{\substack{n \in \mathbb N }} \frac{ \mu(\gcd(n,q))}{ q\sqrt{n} \phi\left( \frac{q}{ \gcd(n,q) } \right) }   \hat{W} \left( \frac{n}{ q^2}  \right) \lim_{X\to\infty}  \mathbb E_{ \{E:H(E)\le X\} }  \Bigl[  \operatorname{ LT}_{n,q} (E) \Bigr] .\]
We also have
\[   \lim_{X\to\infty}  \mathbb E_{ \{E:H(E)\le X\} }  \Bigl[  \operatorname{ LT}_{n,q} (E) \Bigr]  = \lim_{X \to\infty}  \mathbb E_{ \substack{A, B\in \mathbb Z \\ \max (4 \abs{A}^3, 27 \abs{B}^2) \le X\\ p^4 \nmid A \textrm{ or } p^6 \nmid B \textrm{ for all } p } } \Bigl[ \operatorname{ LT}_{n,q} (E_{A,B})  \Bigr].\]

Now $\operatorname{LT}_{n,q}(E_{A,B}) $ is a product over the set of primes $p$ such that $p \leq P$ or $p \mid n$ of a  $p$-adically continuous function of $A,B$. This is because $a_n(E_{A,B})$ is such a $p$-adically continous function, as is the function that is $(1-1/p)^{-1}$ if $p\mid N$ and $0$ otherwise. Furthermore the function which is $1$ if $\gcd(q^2, N) \mid q$ and $\gcd(q,N)\mid n$ and $0$ otherwise can be written as a product of $p$-adically continuous characteristic functions, with the function for $p$ checking that $v_p( \gcd(q^2,N)) \leq v_p(q)$ and $v_p( \gcd(q,N))\leq v_p(n)$.

This leads to
\[  \lim_{X\to\infty}  \mathbb E_{ \{E:H(E)\le X\} }  \Bigl[  \operatorname{ LT}_{n,q} (E) \Bigr] = \prod_{p \leq P} \ell^*_{p,v_p(n), v_p(q)} \prod_{p > P, p\mid n} \ell_{p,{v_p(n)}}, \]
where $\ell^*_{p, \nu, \gamma}$ is defined as $(1- p^{-10})^{-1}$ times the integral of the relevant $p$-adically continuous function, that is
\[ \ell^*_{p, \nu,0} \colonequals  (1-p^{-10} )^{-1} \int_{ \substack{A, B\in \mathbb Z_p \\ p^4\nmid A \textrm{ or } p^6 \nmid B}}  a_{p^\nu} (E_{A,B})  \cdot \begin{cases} (1-1/p )^{-1} & \textrm{if } p \mid N(E_{A,B}) \\ 1 & \textrm{if } p\nmid N(E_{A,B}) \end{cases} \]
\[ \ell^*_{p, 0,1} = (1-p^{-10} )^{-1}   \int_{ \substack{A, B\in \mathbb Z_p \\ p^4\nmid A \textrm{ or } p^6 \nmid B \\ p \nmid N(E_{A,B}) }} 1 \]
and for $\nu>0$
\[ \ell^*_{p,\nu,1} \colonequals  (1-p^{-10} )^{-1}  \int_{ \substack{A, B\in \mathbb Z_p \\ p^4\nmid A \textrm{ or } p^6 \nmid B \\ p^2 \nmid N(E_{A,B}) }} a_{p^\nu} (E_{A,B})\cdot \begin{cases} (1-1/p )^{-1} & \textrm{if } p \mid N(E_{A,B}) \\ 1 & \textrm{if } p\nmid N(E_{A,B}) \end{cases}. \]

Thus we have
\[ \lim_{X\to\infty}  \mathbb E_{ \{E:H(E)\le X\}}  \Bigl[  \frac{\prod_{p \leq P} (1-1/p)^{-1}  }{  N(E)  }\sum_{\substack{ n \in \mathbb N \\ p \nmid n \textrm{ for } p \leq P}} W \left( \frac{n}{ N(E)} \right) \epsilon(E) a_n (E)  \Bigr]\]
\[ =  \sum_{\substack{ q\in\mathbb N \\ P\textrm{-smooth} \\ \textrm{squarefree} }} \sum_{\substack{n \in \mathbb N }} \frac{ \mu(\gcd(n,q))}{q \sqrt{n}  \phi\left( \frac{q}{ \gcd(n,q) } \right) }  \hat{W} \left( \frac{n}{ q^2}  \right)   \prod_{p \leq P} \ell^*_{p,v_p(n), v_p(q)} \prod_{p > P, p\mid n} \ell_{p,{v_p(n)}}. \]
We now make an adjustment to simplify the expression of the local factors. We observe that adding an arbitrary real number $\Delta$ to $\ell^*_{p, \nu,0}$ and adding  $p^2\Delta$ to $\ell^*_{p, \nu+2,1}$ does not affect the sum. Indeed, $\ell^*_{p,\nu,0}$ contributes to the summand associated to pairs $n,q$ with $p\nmid q$, and increasing $\ell^*_{p,\nu,0}$ by $\Delta$ increases that term by
\[ \frac{ \mu(\gcd(n,q))}{ q\sqrt{n} \phi\left( \frac{q}{ \gcd(n,q) } \right) }   \hat{W} \left( \frac{n}{ q^2}  \right) \Delta  \prod_{p'<P,  p'\neq p} \ell^*_{p',v_{p'} (n), v_{p'} (q)} \prod_{p' \geq P, p'\mid n} \ell_{p',{v_{p'}(n)}}, \]
while increasing $\ell^*_{p,\nu+2,1}$ by $p^2\Delta$ increases the term $(np^2, qp)$ by 
\[ \frac{ \mu(\gcd(np^2,qp))}{  pq \sqrt{np^2} \phi\left( \frac{qp }{ \gcd(np^2,qp) } \right) }  \hat{W} \left( \frac{np^2 }{ (qp)^2}  \right)  p^2\Delta  \prod_{p'<P,  p'\neq p} \ell^*_{p',v_{p'} (np^2), v_{p'} (qp)} \prod_{p' \geq P, p'\mid n} \ell_{p',{v_{p'}(np^2)}}, \]
and these two contributions cancel since $v_{p'} (np^2)= v_{p'}(n), v_{p'} (qp) = v_{p'}(n) $, $ \frac{np^2 }{ (qp)^2 } =  \frac{n}{q^2}$.  Since $q$ is prime to $p$ we have $\gcd(np^2,qp)= p \gcd(n,q)$, thus $\frac{qp }{ \gcd(np^2,qp) } = \frac{q}{ \gcd(n,q)}$ and $\mu(\gcd(np^2, qp))= - \mu(\gcd(n,q))$.

We now apply this to \[ \Delta = (1-p^{-10} )^{-1} \int_{ \substack{A, B\in \mathbb Z_p \\ p^4\nmid A \textrm{ or } p^6 \nmid B}}a_{p^\nu} (E_{A,B}) \cdot \begin{cases} 1- (1-1/p )^{-1} & \textrm{if } p \mid N(E_{A,B}) \\ 0 & \textrm{if } p\nmid N(E_{A,B}) \end{cases}.\]

This allows us to replace $\ell^*_{p, \nu,0}$ with $\ell_{p,\nu}$ and $\ell^*_{p,\nu,1}$ with $\hat{\ell}_{p, \nu}$, defined as 
\[ \hat{\ell}_{p, 0 } \colonequals (1-p^{-10} )^{-1}   \int_{ \substack{A, B\in \mathbb Z_p \\ p^4\nmid A \textrm{ or } p^6 \nmid B \\ p \nmid N(E_{A,B}) }} 1 \]
\[ \hat{\ell}_{p,1} \colonequals (1-p^{-10} )^{-1}  \int_{ \substack{A, B\in \mathbb Z_p \\ p^4\nmid A \textrm{ or } p^6 \nmid B \\ p^2 \nmid N(E_{A,B}) }} a_p(E_{A,B}) \cdot \begin{cases}  (1-1/p )^{-1} & \textrm{if } p \mid N(E_{A,B}) \\ 1 & \textrm{if } p\nmid N(E_{A,B}) \end{cases}\]
and for $\nu \geq 2$ 
\[ \hat{\ell}_{p,\nu } \colonequals (1-p^{-10} )^{-1}\!\! \int_{ \substack{A, B\in \mathbb Z_p \\ p^4\nmid A \textrm{ or } p^6 \nmid B  }}\!\!\! \cdot \begin{cases}   \frac{ a_{p^\nu} (E_{A,B}) }{1-\nicefrac{1}{p}}  + p^2  a_{p^{\nu-2} } (E_{A,B})  ( 1\!-\!(1\!-\!\nicefrac{1}{p})^{-1} ) & \textrm{if } p\! \mid\! N(E_{A,B}) \\ a_{p^\nu} (E_{A,B}) & \textrm{if } p\!\nmid\! N(E_{A,B})\end{cases}.\]
If $E_{A,B}$ has multiplicative reduction at $p$, then $a_{p^{\nu-2}} = a_{p^\nu}$ and we have
\begin{align*}
 \frac{a_{p^\nu} (E_{A,B})  }{1-\nicefrac{1}{p} } \!+  a_{p^{\nu-2} } (E_{A,B}) p^2 ( 1 - (1\!-\nicefrac{1}{p})^{-1} ) &= a_{p^\nu} \left( (1-\nicefrac{1}{p})^{-1}\!+ p^2 ( 1- (1\!-\nicefrac{1}{p})^{-1} ) \right)\\
&= - p a_{p^\nu},
\end{align*}
while if $E_{A,B}$ has additive reduction at $p$ then we have $a_{p^{\nu-2}} =0$ for $\nu>2$. This gives the simpler formula
\[ \hat{\ell}_{p,2} =  (1-p^{-10} )^{-1}  \int_{ \substack{A, B\in \mathbb Z_p \\ p^4\nmid A \textrm{ or } p^6 \nmid B  }} \cdot \begin{cases} a_{p^2} & \textrm{if }E_{A,B}\textrm{ has good reduction} \\ - p a_{p^2}  & \textrm{if }E_{A,B}\textrm{ has multiplicative reduction} \\  - \frac{p^2}{p-1} & \textrm{if }E_{A,B} \textrm{ has additive reduction} \end{cases} \]
and for $\nu>2$
\[\hat{\ell}_{p,\nu} = (1-p^{-10} )^{-1}  \int_{ \substack{A, B\in \mathbb Z_p \\ p^4\nmid A \textrm{ or } p^6 \nmid B \\ p^2 \nmid N(E_{A,B}) }}  a_{p^\nu} (E_{A,B}) \cdot \begin{cases} 1 & \text{if } p \nmid N(E_{A,B}) \\ -p & \textrm{if } p \mid N(E_{A,B}) \end{cases} .\]
This agrees with Definition \ref{lphat}. We thus obtain
\begin{equation} \begin{aligned}& \lim_{X\to\infty}  \mathbb E_{ \{E:H(E)\le X\}}  \Bigl[  \frac{\prod_{p \leq P} (1-1/p)^{-1}  }{  N(E)  }\sum_{\substack{ n \in \mathbb N \\ p \nmid n \textrm{ for } p \leq P}} W \left( \frac{n}{ N(E)} \right) \epsilon(E) a_n (E)  \Bigr]\\
=&  \sum_{\substack{ q\in\mathbb N \\ P\textrm{-smooth} \\ \textrm{squarefree} }} \sum_{\substack{n \in \mathbb N }} \frac{ \mu(\gcd(n,q))}{q \sqrt{n} \phi\left( \frac{q}{ \gcd(n,q) } \right) }  \hat{W} \left( \frac{n}{ q^2}  \right)   \prod_{p\mid q} \hat{\ell}_{p, v_p(n)} \prod_{p\mid n, p\nmid q} \ell_{p,{v_p(n)}}.  \end{aligned}\end{equation}
Lemmas \ref{all-lf} and \ref{prime-lf} imply that $\ell_{p,{v_p(n)}}$ and $\hat{\ell}_{p,{v_p(n)}}$ vanish when $v_p(n)$ is odd, so the product vanishes unless $n$ is a perfect square. We thus introduce the change of variables $n=m^2$, obtaining
\begin{equation}\label{prime-P} \begin{aligned}& \lim_{X\to\infty}  \mathbb E_{ \{E:H(E)\le X\}}  \Bigl[  \frac{\prod_{p \leq P} (1-1/p)^{-1}  }{  N(E)  }\sum_{\substack{ n \in \mathbb N \\ p \nmid n \textrm{ for } p \leq P}} W \left( \frac{n}{ N(E)} \right) \epsilon(E) a_n (E)  \Bigr]\\
=&  \sum_{\substack{ q\in\mathbb N \\ P\textrm{-smooth} \\ \textrm{squarefree} }} \sum_{\substack{m \in \mathbb N }} \frac{ \mu(\gcd(m,q))}{q m \phi\left( \frac{q}{ \gcd(m,q) } \right) }  \hat{W} \left( \frac{m^2}{ q^2}  \right)   \prod_{p\mid q} \hat{\ell}_{p, 2v_p(m)} \prod_{p\mid m, p\nmid q} \ell_{p,{2v_p(m)}}.  \end{aligned}\end{equation}
 Since \[\hat{W} \left(\frac{m^2}{q^2} \right) = \int_0^\infty 2 \pi  W( u ) \sqrt{u} J_1 \left( 4 \pi  \frac{\sqrt{ u }m}{q} \right) du\] is an integral of $J_1\! \left(\! 4 \pi  \frac{\sqrt{ u }m}{q} \right) $ against a bounded function on a compact set, Lemma \ref{prime-absolute-convergence} implies that the sum above is absolutely convergent. Hence when we take $\lim_{P \to \infty}$ of each side, on the right-hand side we simply remove the $P$-smooth condition. Then the same absolute convergence allows us to exchange the sum with the integral in the definition of $\hat{W}$, yielding the statement of Theorem~\ref{prime}. \end{proof}

We now turn to the calculation of the local factors $\ell_{p,\nu},$  $\tilde{\ell}_{p,\nu}$, and $\hat{\ell}_{p,\nu}$. We begin with some relatively standard but lengthy preparatory lemmas, the first expressing the measure of the set of $p$-adic elliptic curves whose reduction mod $p$ is a given mod $p$ elliptic curve and the second evaluating a sum over mod $p$ elliptic curves in terms of modular forms. From these, our formulas for $\ell_{p,\nu}$, $\tilde{\ell}_{p,\nu}$, $\hat{\ell}_{p,\nu}$ are obtained by summing over elliptic curves.

We first introduce some convenient notation for mod $p$ elliptic curves. Let $\mathcal M_{1,1}(\mathbb F_p)$ be the set of isomorphism classes of elliptic curves over $\mathbb F_p$, and let $\overline{\mathcal M}_{1,1}(\mathbb F_p)$ be the set of isomorphism classes of curves over $\mathbb F_p$ that are either an elliptic curve or a nodal cubic. (The notation arises from the Deligne-Mumford stacks $\mathcal M_{1,1}$ and $\overline{\mathcal M}_{1,1}$, respectively the moduli spaces of smooth curves of genus one with a marked point and stable curves of genus one with a marked point, but we do not need their stack structure, only their sets of $\mathbb F_p$-points, which can be described in an elementary way.)

We define $a_{p^\nu}(E)$ for $E\in \overline{\mathcal M}_{1,1}(\mathbb F_p)$, as $p^{ \frac{\nu}{2}} U_\nu \left( \frac{p+1 - \abs{E (\mathbb F_p)}}{2\sqrt{p}}\right) $ if $E$ is smooth, $1^\nu$ if $E$ is nodal with smooth locus a split form of $\mathbb G_m$, and $(-1)^\nu$ if $E$ is nodal with smooth locus a non-split form of $\mathbb G_m$.  It is clear from the definitions that $a_{p^\nu}(E)= a_{p^\nu}(E')$ where $E'$ is any elliptic curve over $\mathbb Q_p$ whose reduction mod $p$ is given by $E$.

\begin{lemma}\label{lf-underlying} Fix a prime $p$ and $E \in \overline{\mathcal M}_{1,1}(\mathbb F_p)$.  The measure of the set of pairs $A,B \in \mathbb Z_p^2$ such that $E_{A,B}$ has reduction mod $p$ isomorphic to $E$ and either $p^4\nmid A$ or $p^6\nmid B$, according to the uniform measure on $\mathbb Z_p^2$ with total mass $1$, is
\[ \frac{ p^{-1} -p^{-2}}{ \abs{ \Aut(E)}} \cdot \begin{cases} 1 & \textrm{if } p >3 \\ 3 & \textrm{if } p=3,  E \textrm{ supersingular} \\ 3^{-9} & \textrm{if } p=3, E \textrm{ ordinary} \\ 2^{-8} & \textrm{if } p=2\end{cases} .\]
\end{lemma}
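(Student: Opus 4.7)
The plan is to analyze four cases---$p>3$, $p=3$ with $E$ supersingular, $p=3$ with $E$ ordinary, and $p=2$---by parameterizing the $(A,B)\in\mathbb Z_p^2$ such that $E_{A,B}$ reduces mod $p$ to a curve isomorphic to $E$ using an appropriate Weierstrass normal form, and then invoking orbit--stabilizer for the action of the Weierstrass change-of-variables group, whose stabilizer at $E$ is $\Aut(E)(\mathbb F_p)$.

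For $p>3$, the short Weierstrass form $y^2=x^3+\bar A x+\bar B$ is directly available over $\mathbb F_p$, and two such forms are $\mathbb F_p$-isomorphic iff related by the $\mathbb F_p^\times$-scaling $(\bar A,\bar B)\mapsto(u^{-4}\bar A,u^{-6}\bar B)$. Orbit--stabilizer then gives $(p-1)/|\Aut(E)|$ representatives in $\mathbb F_p^2\setminus\{(0,0)\}$, each with a fiber of $\mathbb Z_p^2$-lifts of measure $p^{-2}$; the minimality condition $p^4\nmid A$ or $p^6\nmid B$ is automatic since $(\bar A,\bar B)\ne(0,0)$. Multiplying yields $(p^{-1}-p^{-2})/|\Aut(E)|$.

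For $p=3$ with $E$ supersingular, the short form is still available (every smooth $y^2=x^3+\bar A x+\bar B$ with $\bar A\ne0$ has $j=0$), but the isomorphism group of short forms over $\mathbb F_3$ enlarges from $\mathbb F_3^\times$ to $\mathbb F_3^\times\ltimes\mathbb F_3$, because the translation $x\to x+r$ does not introduce an $x^2$-term in characteristic $3$. The enlarged group has order $p(p-1)=6$, so running orbit--stabilizer with this larger group produces the additional factor $p=3$.

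For $p=3$ with $E$ ordinary and for $p=2$, no short form for $E$ exists over $\mathbb F_p$, so I parameterize via the minimal long Weierstrass model $y^2+a_1 xy+a_3 y=x^3+a_2 x^2+a_4 x+a_6$ (taking $a_1=a_3=0$ when $p=3$) that reduces directly to $E$. The measure of $\mathbb Z_p$-lifts of long forms representing $E$ follows from orbit--stabilizer for the long-form change-of-variables group of $(u,r,s,t)$. I then convert to short form by completing the square in $y$ (for $p=2$, introducing factors of $1/2$) and the cube in $x$ (for $p=3$, introducing $1/3$'s), and finally rescale by $u\in\mathbb Q_p^\times$ so that $(A,B)\in\mathbb Z_p^2$ satisfies the minimality condition. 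Tracking $p$-adic valuations through these steps shows that the composite $(a_i)\mapsto(A,B)$, restricted to a cross-section of the long-form iso action, has a Jacobian contributing the factors $3^{-9}$ (when $p=3$, $E$ ordinary) and $2^{-8}$ (when $p=2$). The main obstacle is this Jacobian/valuation bookkeeping, especially for $p=2$, where one must complete both the square and the cube and simultaneously track the $2$-adic valuations of $A$ and $B$ against the minimality condition; a parallel subtlety is correctly identifying the stabilizer of the long-form action (which can be larger than the $\mathbb F_p^\times$-stabilizer of the short form) with $|\Aut(E)(\mathbb F_p)|$.
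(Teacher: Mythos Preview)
Your plan is correct and follows essentially the same route as the paper: orbit--stabilizer for the Weierstrass change-of-variables group over $\mathbb F_p$, passage through the long form $y^2=x^3+a_2x^2+a_4x+a_6$ (resp.\ the full $a_1,\dots,a_6$ form) for $p=3$ (resp.\ $p=2$), a measure-preserving change of coordinates to $(A',B')$, and then determination of the unique scaling $\lambda\in p^{\mathbb Z}$ that lands $(A,B)=(\lambda^{-4}A',\lambda^{-6}B')$ in the minimal box. One caution on your shortcut for $p=3$ supersingular: counting short forms over $\mathbb F_3$ under the enlarged group $\mathbb F_3^\times\ltimes\mathbb F_3$ presupposes that whenever $E_{A,B}$ has supersingular reduction the given short model is already minimal (i.e.\ $3\nmid A$), which the paper establishes by first going through the long form and using that supersingularity forces $a_2\equiv 0\pmod 3$, hence $r=a_2/3\in\mathbb Z_3$ and $\lambda=1$; you should make this step explicit rather than assume it.
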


\begin{proof} Note that a curve over $\mathbb Q_p$ has reduction mod $p$ isomorphic to $E$ if and only if it is $\mathbb Q_p$-isomorphic to a curve over $\mathbb Z_p$ whose mod $p$ fiber is isomorphic to $E$.

We first handle the case $p>3$.  The key point in this case is that any elliptic curve over $\mathbb Z_p$ may be put in the form $y^2 = x^3 + Ax + B$ over $\mathbb Z_p$, and any curve over $\mathbb Q_p$ may be put in the form $y^2 =x^3+Ax +B$ in a unique (with $p^4\nmid A$ or $p^6 \nmid B$ way) so the curve $E_{A,B}$ has reduction modulo $p$ isomorphic to $E$ if and only if the equation $y^2=x^3+Ax + B$, reduced modulo $p$, defines a curve isomorphic to $E$. Furthermore, any isomorphism between two curves over $\mathbb F_p$ with equations of the form $y^2 =x^3 +A x+ B$ has the form $ x \mapsto \lambda^2 x, y \mapsto \lambda^3 y$ for some $\lambda \in \mathbb F_p^\times$. It follows that the number of pairs $A, B \in \mathbb F_p$ defining a curve isomorphic to $E \in \overline{\mathcal M}_{1,1}(\mathbb F_p)$ is $(p-1) / \abs{\Aut(E)}$ so the $p$-adic measure of curves with reduction mod $p$ isomorphic to $E$ is $\frac{ (p-1) / \abs{\Aut(E)}}{ p^2} =  (p^{-1} - p^{-2} ) \frac{1}{ \abs{\Aut(E)}}$.

We next handle the case $p=3$ and finally $p=2$, which we do by a series of lemmas. We always work with the uniform $p$-adic measure on $\mathbb Q_p$ that assigns $\mathbb Z_p$ mass one.

\begin{lemma}\label{3-local-criterion} For $E \in \overline{\mathcal M}_{1,1}(\mathbb F_3)$, the curve $y^2 =x^3+Ax+B$ has reduction mod $3$ isomorphic to $E$ if and only if there exists $\lambda \in 3^{\mathbb Z}$ and $r \in \mathbb Q_3$ such that $y^2 = (x+r)^3 + \lambda^4 A (x+r) + \lambda^6 B$ is a polynomial with coefficients in $\mathbb Z_3$, and reduced mod $3$ defines $E$. \end{lemma}

We  will write $A' $ for $ \lambda^4 A$ and $B'$ for $ \lambda^6 B$.

\begin{proof} Any elliptic curve over $\mathbb Z_3$ may be put in the form $y^2=x^3 + a_2 x^2 + a_4 x + a_6$ with $a_2,a_4,a_6 \in \mathbb Z_3$, so a curve has reduction isomorphic to $E$ if and only if it can be put in that form with $y^2=x^3 + a_2 x^2 + a_4 x + a_6$ defining $E$ modulo $3$. Any isomorphism between the curves $y^2 =x^3 +A x+B$ and $y^2=x^3 + a_2 x^2 + a_4 x + a_6$ is necessarily a linear change of variables of the form $x \mapsto \lambda^{-2}( x + r) $ for $r \in \mathbb Q_3$ and $\lambda \in \mathbb Q_3^\times$, and because multiplying $x$ by a unit preserves the property that the reduction is isomorphic to $E$, we may assume $\lambda \in 3^{\mathbb Z}$. \end{proof}

\begin{lemma}\label{3-local-balanced} Fix  $E \in \overline{\mathcal M}_{1,1}(\mathbb F_3)$. The measure of the set of triples $a_2,a_4,a_6 \in \mathbb Z_3$ such that mod $3$ the equation $y^2 = x^3 + a_2 x^2 + a_4 x+ a_6$ defines a curve isomorphic to $E$ is $(3^{-1} - 3^{-2}) \frac{1}{\abs{\Aut(E)}}$. \end{lemma}

\begin{proof} Any isomorphism between two curves over $\mathbb F_3$ with equations of the form $y^2 = x^3 + a_2 x^2 + a_4 x+ a_6$ has the form $x\mapsto \lambda^2 x+s , y\mapsto \lambda^3 y$ for some $\lambda \in \mathbb F_3^\times, s\in \mathbb F_3$. It follows that the number of triples $a_2,a_4,a_6$ defining a curve isomorphic to $E \in \overline{\mathcal M}_{1,1}(\mathbb F_3)$ is $3(3-1) / \abs{\Aut(E)}$ so the $3$-adic measure of the set of triples $a_2,a_4,a_6\in\mathbb Z_3$  with $y^2=x^3+a_2x^2+a_4 x+a_6$ isomorphic to $E$ is $ \frac{ 3(3-1)/\abs{\Aut(E)}}{3^3}= (3^{-1} - 3^{-2}) \frac{1}{\abs{\Aut(E)}}$. \end{proof}

\begin{lemma}\label{3-bijection} The map $\mathbb Q_3^3 \to \mathbb Q_3^3$ that sends $(a_2,A',B') $ to $(a_2,a_4,a_6)$ where  $x^3 + a_2 x^2 + a_4 x + a_6 = (x+ a_2/3)^3 + A' (x+a_2/3) + B'$ is a measure-preserving bijection.\end{lemma}

 \begin{proof} It is easy to check that there is a unique $a_4,a_6$ for each $A,B$ and vice versa so this is indeed a bijection. Moreover, $A'$ is equal to $a_4$ plus a polynomial in $a_2$ and $B'$ is equal to $a_6$ plus a polynomial in $a_2, a_4$ so this bijection preserves the $3$-adic measure (for example, since its Jacobian is upper-triangular unipotent and hence has determinant $1$).\end{proof}
 
\begin{lemma}\label{3-scaled} Fix  $E \in \overline{\mathcal M}_{1,1}(\mathbb F_3)$. The measure of the set of $A',B' \in \mathbb Q_3^2$ such that there exists $r\in \mathbb Q_3$ with $(x+r)^3 + A' (x+r) + B'$ a polynomial with coefficients in $\mathbb Z_3$ whose reduction mod $3$ gives an elliptic curve isomorphic to $E$, is $(1-3^{-1})  \frac{1}{\abs{\Aut(E)}}$. \end{lemma}
 
 \begin{proof}  Lemma \ref{3-local-balanced} together with the measure-preserving bijection of Lemma \ref{3-bijection} together imply that the measure of the set of $(a_2,A',B')$ such that $y^2 = (x+ a_2/3)^3 + A' (x+a_2/3) + B'$ is a polynomial with coefficients in $\mathbb Z_3$ whose reduction mod $3$ gives an elliptic curve isomorphic to $E$ is $(3^{-1} - 3^{-2}) \frac{1}{\abs{\Aut(E)}}$.  For each $A',B'$, if any $a_2$ satisfies this condition, then the measure of the set of $a_2$ that do is $3^{-1}$: Given an $a_2$ satisfying the condition, any other solution must differ by a multiple of $3$, and everything that differs by a multiple of $3$ gives a solution.
 
 So the measure of the set of triples $(a_2,A', B')$ satisfying this condition is $3^{-1}$ times the measure of the set of pairs $A',B'$ where at least one $a_2$ exists, giving the statement.\end{proof}
 
 We now conclude the $p=3$ case. We see from Lemma \ref{3-local-criterion} that the set of $A,B$ with $y^2=x^3+Ax +B$ having reduction isomorphic to $E$ is the set of $A,B$ such that there exists $\lambda\in 3^{\mathbb Z}$ with $\lambda^4 A, \lambda^6 B$ in the set whose measure is computed in Lemma \ref{3-scaled}. We now split into cases depending on whether or not $E$ is supersingular. Note that an elliptic curve $y^2=f(x)$ in characteristic $p$ is supersingular if and only if the coefficient of $x^{p-1}$ in $f(x)^{\frac{p-1}{2}}$ is zero, so in characteristic $3$ if and only if the coefficient of $x^2$ in $f(x)$ is zero.

If $E$ is supersingular then every equation $y^2= x^3+a_2 x^2+a_4x+a_6$ defining $E$ has $a_2\equiv 0\bmod 3$ so that $r\in \mathbb Z_3$, thus $A' \in \mathbb Z_3$ and $B'\in \mathbb Z_3$, but $A'$ and $B'$ are not both divisible by $3$ as then $(x+r)^3 + A' (x+r) + B'$ would have a single root mod $3$ and $y^2= (x+r)^3 + A' (x+r) + B'$ would not be nodal. Thus the only way to have $A' = \lambda^4 A$ and $B'=\lambda^6 B$ with $A,B\in \mathbb Z_3$ and $3^4\nmid A$ or $3^6\nmid B$ is if $\lambda=1$ and $A'=A, B'=B$. Hence the measure of the set of $A,B$ is equal to the measure of the set of $A',B'$ and thus is $(1-3^{-1})  \frac{1}{\abs{\Aut(E)}}$.

On the other hand, if $E$ is not supersingular, then every equation $y^2= x^3+a_2 x^2+a_4x+a_6$ defining $E$ has $a_2\not\equiv  0\bmod 3$ so $r\in \frac{1}{3} \mathbb Z_3 \setminus \mathbb Z_3$ and hence $A' \notin \mathbb Z_3$ but $3^2 A' \in \mathbb Z_3$ and $3^3 B' \in \mathbb Z_3$. Thus, the only way to have  $A' = \lambda^4 A$ and $B'=\lambda^6 B$ with $A,B\in \mathbb Z_3$ and $3^4\nmid A$ or $3^6\nmid B$ is if $\lambda=3^{-1}$ and $A'=3^{-4}A, B'=3^{-6} B$. Thus the measure of the set of $A, B$ is $3^{-10}$ times the measure of the set of $A', B'$ and thus is $3^{-10} (1-3^{-1})  \frac{1}{\abs{\Aut(E)}}$.

We finally handle the case $p=2$, which is similar, but notationally more complicated. (We could have saved space by handling these cases together, but it seems clearer to have the easier $p=3$ case first.)

\begin{lemma}\label{2-local-criterion} For  $E \in \overline{\mathcal M}_{1,1}(\mathbb F_2)$, the curve $y^2 =x^3+Ax+B$ has reduction mod $2$ isomorphic to $E$ if and only if there exists $\lambda \in 2^{\mathbb Z}$ and $r,s,t \in \mathbb Q_2$ such that $(y+sx+t) ^2 = (x+r)^3 + \lambda^4 A (x+r) + \lambda^6 B$ is a polynomial with coefficients in $\mathbb Z_2$, and, reduced mod $2$ defines $E$. \end{lemma}

We  will write $A' $ for $ \lambda^4 A$ and $B'$ for $\lambda^6 B$.

\begin{proof} Any elliptic curve over $\mathbb Z_2$ may be put in the form $y^2 + a_1 xy+ a_3 y =x^3 + a_2 x^2 + a_4 x + a_6$ with $a_1, a_2,a_3, a_4,a_6 \in \mathbb Z_2$, so a curve has reduction isomorphic to $E$ and only if it can be put in that form with $y^2 + a_1 xy+ a_3 y =x^3 + a_2 x^2 + a_4 x + a_6$  defining $E$ modulo $2$. Any isomorphism between the curves $y^2 =x^3 +A x+B$ and $y^2 + a_1 xy+ a_3 y =x^3 + a_2 x^2 + a_4 x + a_6$  is necessarily a linear change of variables of the form $x \mapsto \lambda^{-2} ( x + r) $ and $y \mapsto \lambda^{-3} (y + sx+t)$ for $r,s,t  \in \mathbb Q_2$ and $\lambda \in \mathbb Q_2^\times$, and because multiplying $\lambda$ by a unit preserves the property that the reduction of $(y+sx+t) ^2 = (x+r)^3 + \lambda^4 A (x+r) + \lambda^6 B$ is isomorphic to $E$, we may assume $\lambda \in 2^{\mathbb Z}$. \end{proof} 

\begin{lemma}\label{2-local-balanced} Fix  $E \in \overline{\mathcal M}_{1,1}(\mathbb F_2)$. The measure of the set of tuples $a_1, a_2,a_3, a_4,a_6 \in \mathbb Z_2$ such that mod $2$ the equation $y^2+ a_1 xy+ a_3 y = x^3 + a_2 x^2 + a_4 x+ a_6$ defines a curve isomorphic to $E$ is $ \frac{ 1 }{2^2\abs{\Aut(E)}}$. \end{lemma}

\begin{proof} Any isomorphism between two curves over $\mathbb F_2 $ with equations of the form $y^2 + a_1 xy + a_3 y = x^3 + a_2 x^2 + a_4 x+ a_6$ has the form $x\mapsto x+r , y\mapsto y+ sx+t$ for some $ r,s,t \in \mathbb F_2$. It follows that the number of tuples $a_1,a_2,a_3, a_4,a_6$ defining a curve isomorphic to $E \in \overline{\mathcal M}_{1,1}(\mathbb F_2)$ is $2^3 / \abs{\Aut(E)}$ so the $2$-adic measure of the set of tuples $a_1, a_2,a_3, a_4,a_6\in\mathbb Z_2$  with $y^2+ a_1 xy+ a_3 y=x^3+a_2x^2+a_4 x+a_6$ isomorphic to $E$ is $ \frac{ 2^3/\abs{\Aut(E)}}{2^5}= \frac{1}{2^2\abs{\Aut(E)}}$. \end{proof}

\begin{lemma}\label{2-bijection} The map $\mathbb Q_2^5 \to \mathbb Q_2^5$ that sends $(a_1, a_2,a_3, A',B') $ to $(a_1, a_2,a_3, a_4,a_6)$ where  
\[ x^3 + a_2 x^2 + a_4 x + a_6-  y^2 - a_1xy - a_3 y  = (x+ a_2/3  - a_1^2/12 )^3 + A' (x+a_2/3) + B' - (y + a_1 x/2 + a_3/2)^2\] is a measure-preserving bijection.\end{lemma}

 \begin{proof} It is easy to check that there is a unique $a_4,a_6$ for each $A,B$ and vice versa so this is indeed a bijection. Moreover, $A'$ is equal to $a_4$ plus a polynomial in $a_1,a_2,a_3$ and $B'$ is equal to $a_6$ plus a polynomial in $a_1,a_2, a_3,a_4$ so this bijection preserves the $2$-adic measure.\end{proof}

\begin{lemma}\label{2-scaled} Fix  $E \in \overline{\mathcal M}_{1,1}(\mathbb F_2)$. The measure of the set of $A',B' \in \mathbb Q_2^2$ such that there exists $r,s,t\in \mathbb Q_2$ with $(x+r)^3 + A' (x+r) + B' - (y+sx + t)^2 $ a polynomial with coefficients in $\mathbb Z_2$ whose reduction mod $2$ gives an elliptic curve isomorphic to $E$, is $  \frac{1}{\abs{\Aut(E)}}$. \end{lemma}
 
\begin{proof}  Lemma \ref{2-local-balanced} and the measure-preserving bijection of Lemma \ref{2-bijection} together imply that the measure of the set of $(a_1,a_2,a_3, A',B')$ such that $(x+ a_2/3  - a_1^2/12 )^3 + A' (x+a_2/3) + B' - (y + a_1 x/2 + a_3/2)^2$ is a polynomial with coefficients in $\mathbb Z_2$ whose reduction mod $2$ gives an elliptic curve isomorphic to $E$ is $ \frac{1}{\abs{\Aut(E)}}$.  For each $A',B'$, if any triple $a_1,a_2,a_3$ satisfies this condition, then the measure of the set of triples $a_1,a_2,a_3$ that do is $2^{-2}$: Given an $a_1,a_2,a_3$ satisfying this condition, adding multiples of $2$ to $a_1$ and $a_3$ and an element of $\mathbb Z_2$ to $a_2$ produces another example, and any solution must arise this way.
  
So the measure of the set of tuples $(a_1,a_2,a_3,A', B')$ satisfying this condition is $2^{-2}$ times the measure of the set of pairs $A',B'$ where at least one triple $a_1,a_2,a_3$ exists, giving the statement.\end{proof}
 
We now conclude the $p=2$ case. We see from Lemma \ref{2-local-criterion} that the set of $A,B$ with $y^2=x^3+Ax +B$ having reduction isomorphic to $E$ is the set of $A,B$ such that there exists $\lambda\in 2^{\mathbb Z}$ with $\lambda^4 A, \lambda^6 B$ in the set whose measure is computed in Lemma \ref{2-scaled}.  Note that any equation $y^2 + a_1 xy + a_3 y = x^3+a_2 x^2 + a_4 x +a_ 6$ with $a_1,a_3\equiv 0 \bmod 2$ has a cusp singularity over $\mathbb F_2$ at the point $( a_4, a_2 a_4+a_6)$ and thus cannot define the curve $E$.  This means we must have $s$ or $t$ in $\frac{1}{2} \mathbb Z_2 \setminus \mathbb Z_2$ so that $(x+r)^3  + A' (x+r) + B'$ has all coefficients in $\frac{1}{4} \mathbb Z_2$ and either the coefficient of $x^2$ or the coefficient of $1$ in $\frac{1}{4} \mathbb Z_2 \setminus \mathbb Z_2$. It is then impossible to have $A',B'\in \mathbb Z_2$ as this would force the Newton polygon of $(x+r)^3  + A' (x+r) + B'$ to be a straight line with slope the $2$-adic valuation of $r$ and in particular with integer slope. However, we must have $r \in \frac{1}{4} \mathbb Z_2$ so that $A' \in \frac{1}{16} \mathbb Z_2, B' \in \frac{1}{64} \mathbb Z_2$. Thus the only way to have $A' = \lambda^4 A$ and $B'= \lambda^6 B$ with  $A\in \mathbb Z_2$ and $B\in \mathbb Z_2$ but either $2^4\nmid A$ or $2^6 \nmid B$ is if $ \lambda= 2^{-1}$. Hence the measure of the set of $A, B$ is $2^{-10}$ times the measure of the set of $A',B'$ and thus is $2^{-10} \frac{1}{ \abs{\Aut(E)}} = 2^{-8} (2^{-1}- 2^{-2} ) \frac{1}{ \abs{\Aut(E)}}$.\end{proof}

\begin{lemma}\label{lf-modular} For any prime $p$ and positive integer $\nu$, we have 
\[ \sum_{E \in \overline{\mathcal M}_{1,1}(\mathbb F_p)} \frac{ a_{p^\nu}(E)}{ \abs{\Aut(E)}} = - \sum_{ \substack{ f \in S^{ \nu+2}_0 (\SL_2(\mathbb Z))\\ \emph{eigenform} \\ a_1(f)=1 }} a_p (f) .\] \end{lemma}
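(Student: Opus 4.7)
The plan is to combine two classical ingredients: Deuring's mass formula for elliptic curves over $\mathbb F_p$ classified by trace of Frobenius, and the Eichler--Selberg trace formula for $T_p$ acting on $S_{\nu+2}(\SL_2(\mathbb Z))$. Set $P_k(t, n) \colonequals n^{(k-2)/2} U_{k-2}(t/(2\sqrt{n}))$, so that $a_{p^\nu}(E) = P_{\nu+2}(t, p)$ whenever $E/\mathbb F_p$ is smooth with $a_p(E) = t$, and write $H(m)$ for the Hurwitz class number (not to be confused with the naive height used elsewhere in the paper). The identity asserts equality of the ``geometric'' stacky sum on the left with (minus) the ``spectral'' trace on the right.

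I would first split
\[ \sum_{E \in \overline{\mathcal M}_{1,1}(\mathbb F_p)} \frac{a_{p^\nu}(E)}{\abs{\Aut(E)}} = \sum_{E \in \mathcal M_{1,1}(\mathbb F_p)} \frac{a_{p^\nu}(E)}{\abs{\Aut_{\mathbb F_p}(E)}} + \frac{1^\nu + (-1)^\nu}{2}, \]
since the split and non-split nodal cubics each have $\abs{\Aut} = 2$ and contribute $1^\nu$ and $(-1)^\nu$ respectively. Grouping the smooth terms by $t = a_p(E)$ and applying the stacky Deuring mass formula $\sum_{[E]/\mathbb F_p,\, a_p(E)=t} 1/\abs{\Aut_{\mathbb F_p}(E)} = H(4p - t^2)/2$ (the $1/2$ encoding the universal $[-1]$-automorphism), the smooth part becomes $\tfrac{1}{2}\sum_{\abs{t} < 2\sqrt{p}} P_{\nu+2}(t, p)\, H(4p - t^2)$. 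Then I would invoke Eichler--Selberg for $T_p$ (with $p$ prime) on $S_{\nu+2}(\SL_2(\mathbb Z))$:
\[ \sum_{f} a_p(f) = -\frac{1}{2}\sum_{\abs{t} < 2\sqrt{p}} P_{\nu+2}(t, p)\, H(4p - t^2) - 1, \]
the trailing $-1$ being the hyperbolic/divisor-sum contribution $-\tfrac{1}{2}\sum_{dd'=p}\min(d,d')^{\nu+1} = -1$. Substituting shows the smooth piece equals $-\sum_f a_p(f) - 1$, which when added to the nodal contribution $(1+(-1)^\nu)/2$ gives exactly $-\sum_f a_p(f)$ when $\nu$ is even; when $\nu$ is odd, both sides vanish by the $t \leftrightarrow -t$ symmetry of $U_\nu$ and the triviality of $S_{\nu+2}(\SL_2(\mathbb Z))$.

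The main technical obstacle is calibrating the $1/2$ normalization in Deuring's formula so that it applies uniformly across the special loci---the supersingular curves (when $t = 0$) and the curves with $j \in \{0, 1728\}$ whose automorphism group strictly contains $\{\pm 1\}$, where the Hurwitz weighting compensates for the enlarged $\abs{\Aut_{\mathbb F_p}(E)}$. A more conceptual alternative is to apply the Grothendieck--Lefschetz trace formula directly on the Deligne--Mumford stack $\overline{\mathcal M}_{1,1}/\mathbb F_p$ with coefficients in $\mathcal F_\nu \colonequals \operatorname{Sym}^\nu R^1 \bar\pi_* \mathbb Q_\ell$, check that $H^0 = H^2 = 0$ for $\nu \geq 1$ (the former because $\operatorname{Sym}^\nu$ of the standard $\SL_2$-representation has no invariants, the latter by Poincar\'e duality), and apply the $\ell$-adic Eichler--Shimura isomorphism plus smooth proper base change to identify $H^1(\overline{\mathcal M}_{1,1,\overline{\mathbb F_p}}, \mathcal F_\nu) \cong \bigoplus_f \rho_f$ with $\operatorname{tr}(\operatorname{Frob}_p \mid \rho_f) = a_p(f)$; this bypasses all boundary bookkeeping and yields the identity in a single step.
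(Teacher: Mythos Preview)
Your proposal is correct and matches the paper's proof: the paper applies Hijikata's explicit trace formula and matches its elliptic terms to smooth curves (via the same Deuring-type correspondence between class numbers of orders and stacky counts of elliptic curves that you invoke through Hurwitz class numbers) and its hyperbolic terms at $s=\pm(p+1)$ directly to the two nodal curves, which is your Eichler--Selberg argument with the bookkeeping reorganized. The paper also sketches exactly your cohomological alternative via the Lefschetz trace formula on $\overline{\mathcal M}_{1,1}$ with coefficients in $\operatorname{Sym}^\nu$ of the relative $H^1$ and Deligne's $\ell$-adic Eichler--Shimura.
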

\begin{proof} We prove this via the trace formula. The sum $\sum_{ \substack{ f \in S^{ \nu+2}_0 (\SL_2(\mathbb Z))\\ \textrm{eigenform} \\ a_1(f)=1 }} a_p (f) $ is $p^{\frac{\nu}{2}}$ times the trace of the Hecke operator $T(p)$ on $S^{ \nu+2}_0( SL_2(\mathbb Z))$, which is calculated by the trace formula \cite[Theorem 0.1]{Hijikata} as 
\[- \sum_{ \substack{ s\in \mathbb Z \\ s^2-4p  \leq 0 \textrm{ or } s^2-4p =t^2 \textrm{ for }t\in \mathbb Z}} a(s) \sum_{\substack{ f\in \mathbb N \\ f^2 \mid s^2-4p}} b(s,f) \]
where $\Phi_s(X) = X^2 -s X + p$ has roots $x,y$, we have $a(s)= \frac{1}{2} \frac{x^{\nu+1} - y^{\nu+1}}{x-y} p^{- \frac{\nu}{2}} $ if $s^2-4p <0$ or $a(s)= \frac{ \min ( \abs{x}, \abs{y}) ^{\nu-1}}{ \abs{x-y}}  (\operatorname{sgn}(x))^\nu n^{- \frac{\nu}{2}}$ if $s^2-4p >0$, and $b(s,f)$ is the class number of the order with discriminant $(s^2-4p)/f^2$ divided by half the order of the unit group if $s^2-4p<0$ or $b(s,f) =  \phi(\abs{t}/f)/2$ if $s^2-4p>0$.  (The other terms in \cite[Theorem 0.1]{Hijikata} can be ignored since, by definition, $\delta(\chi)=\delta(\sqrt{p})=0$ and $c(s,f)=1$ in our case.)

We can only have $s^2-4p=t^2$ if $s^2-t^2 =4p$ so $(s+t)$ and $(s-t)$ are both even and thus one is $\pm 2p$ and the other is $\pm 2$ so $s=\pm (p+1)$ and $t=\pm (p-1)$. In this case, $\sum_{f^2 \mid t^2} \phi( \abs{t}/f)/2= \abs{t}/2= (p-1)/2$. Furthermore  $x$ and $y$ are either $p$ and $1$ or $-p$ and $-1$ so that $\frac{ \min ( \abs{x}, \abs{y}) ^{\nu-1}}{ \abs{x-y}} = \frac{1}{p-1}$. Thus the contribution to the sum over $s$ from $s$ with $s^2-4p>0$ is $ \frac{1}{2} (\operatorname{sgn}(s))^\nu n^{- \frac{\nu}{2}}$. After multiplying by $n^{\frac{\nu}{2}}$, this matches the contribution of the nodal elliptic curves, of which there are two isomorphism classes, each with automorphisms of order $2$, one with $a_{p^\nu}(E)=1^\nu$ matching $s=p+1$ and one with $a_{p^{\nu}}(E) = -1^{\nu}$ matching $s=-(p+1)$.

The terms with $s^2-4p <0$ match the contributions of smooth elliptic curves, in particular the term $s$ matches the contribution of elliptic curves satisfying $a_p = s$ since $xy=p$ implies that  $\frac{x^{\nu+1} - y^{\nu+1}}{x-y} = p^{\frac{\nu}{2}} U_\nu \left( \frac{x+y}{2 \sqrt{p}}\right) = p^{ \frac{\nu}{2}} U_\nu \left( \frac{s}{2\sqrt{p}} \right)$ which is the value of $a_{p^\nu}$ for these elliptic curves.  The $ p^{- \frac{\nu}{2}}$ in the formula for $a(s)$ cancels the $p^{\frac{\nu}{2}}$ we multiply the trace of the Hecke operator by and the $\frac{1}{2}$ cancels the $2$ that was divided from the order of the unit group. Any such elliptic curve has endomorphism group the order with discriminant $(s^2-4p)/f^2$ for some $f$, the number of elliptic curves with endomorphism group a given order is the class number, and the order of the automorphism group is the order of the unit group of the order, so every term matches.

We also sketch an alternate geometric proof. One can consider $f$ the morphism from the universal elliptic curve to $\overline{\mathcal M}_{1,1}$ and $f_* \mathbb Q_\ell$ the pushforward of the constant sheaf in the sense of \'{e}tale cohomology. The trace of Frobenius on the stalk of this sheaf at a point $E $ is given by $a_p(E)$. The stalk is rank $2$ if $E$ is a smooth elliptic curve, with the determinant of Frobenius equal to $p$, and of rank $1$ if $E$ is nodal. It follows that the trace of Frobenius on $\operatorname{Sym}^{\nu} (f_* \mathbb Q_\ell)$ is $a_{p^\nu}(E)$ in either case. The Lefschetz fixed point formula for stacks then implies that $\sum_{E \in \overline{\mathcal M}_{1,1}(\mathbb F_p)} \frac{ a_{p^\nu}(E)}{ \abs{\Aut(E)}} $ is $ \sum_{i=0}^2(-1)^i  \operatorname{tr} ( \operatorname{Frob}_p, H^i( \overline{\mathcal M}_{1,1,\overline{\mathbb F}_p}, \operatorname{Sym}^{\nu} (f_* \mathbb Q_\ell))$. The cohomology groups in degree $0$ and $2$ vanish since $\nu>0$ while the work of Deligne~\cite{Deligne} can be used to deduce that $H^1(\overline{\mathcal M}_{1,1,\overline{\mathbb F}_p}, \operatorname{Sym}^{\nu} (f_* \mathbb Q_\ell))$ is the sum of two-dimensional Galois representations associated to cuspidal eigenforms of weight $\nu+2$, the trace of Frobenius on each one being the coefficient of $p$ in the corresponding modular form. This gives the same formula. \end{proof}

\begin{lemma}\label{all-lf-detailed} Fix a prime $p$ and positive integer $\nu$. Let $\ell_{p,\nu}$ be the quantity defined in Definition \ref{lp}. If $p>3$ we have
\[ \ell_{p,\nu}= \frac{p^{-1}- p^{-2}}{1-p^{-10}}  \sum_{E \in \overline{\mathcal M}_{1,1}(\mathbb F_p)} \frac{ a_{p^\nu}(E)}{ \abs{\Aut(E)}} = - \frac{p^{-1}- p^{-2}}{1-p^{-10}}  \sum_{ \substack{ f \in S^{ \nu+2}_0 (\SL_2(\mathbb Z))\\ \emph{eigenform} \\ a_1(f)=1 }} a_p (f) .\] 
If $p=3$ we have
\[ \ell_{3,\nu}= \frac{ 1- 3^{-1} }{ 1- 3^{-10}}  \Bigl( \sum_{\substack{ E \in \overline{\mathcal M}_{1,1}(\mathbb F_3)\\ E \emph{ supersingular}}} \frac{ a_{3^\nu}(E)}{ \abs{\Aut(E)}} + 3^{-10}  \sum_{\substack{ E \in \overline{\mathcal M}_{1,1}(\mathbb F_3)\\ E \emph{ ordinary}}} \frac{ a_{3^\nu}(E)}{ \abs{\Aut(E)}}  \Bigr)\] \[ =  3^{\frac{\nu}{2} -2} \Bigl(   U_\nu \left( \frac{3}{ 2 \sqrt{3} }\right) +4 U_\nu \left( 0\right) +  U_\nu \left( \frac{-3}{ 2 \sqrt{3}} \right) \Bigr)  -   \frac{3^{-10}- 3^{-11}}{1-3^{-10}}  \sum_{ \substack{ f \in S^{ \nu+2}_0 (\SL_2(\mathbb Z))\\ \emph{eigenform} \\ a_1(f)=1 }} a_3 (f) . \]
If $p=2$ we have
\[ \ell_{2,\nu}= \frac{2^{-10}}{1-2^{-10}}  \sum_{E \in \overline{\mathcal M}_{1,1}(\mathbb F_2)} \frac{ a_{2^\nu}(E)}{ \abs{\Aut(E)}} =-  \frac{2^{-10}}{1-2^{-10}}   \sum_{ \substack{ f \in S^{ \nu+2}_0 (\SL_2(\mathbb Z)) \\ \emph{eigenform} \\ a_1(f)=1 }} a_2 (f) .\] 
\end{lemma}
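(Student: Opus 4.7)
The plan is to rewrite the integral in Definition \ref{lp} as a sum over $E \in \overline{\mathcal M}_{1,1}(\mathbb F_p)$ weighted by the measure of each reduction stratum. The key observation is that $a_{p^\nu}(E_{A,B})$ depends only on the reduction of $E_{A,B}$ modulo $p$, and vanishes for $\nu \geq 1$ whenever $E_{A,B}$ has additive (cuspidal) reduction, since then $a_p = 0$. Thus only the smooth and nodal reduction classes contribute, Lemma~\ref{lf-underlying} provides the measure of each stratum, and Lemma~\ref{lf-modular} converts the resulting sums over $E$ into traces of Hecke operators and hence into sums of Fourier coefficients of weight-$(\nu+2)$ cusp forms.

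For $p > 3$ the measure factor from Lemma~\ref{lf-underlying} is uniformly $(p^{-1}-p^{-2})/|\Aut(E)|$, so I would pull this constant out and recognize the remaining sum $\sum_E a_{p^\nu}(E)/|\Aut(E)|$ as $-\sum_f a_p(f)$ via Lemma~\ref{lf-modular}, yielding the stated formula after division by $1-p^{-10}$. The case $p=2$ is identical, with the measure factor simplifying as $(2^{-1}-2^{-2})\cdot 2^{-8} = 2^{-10}$. For $p=3$, the measure factor in Lemma~\ref{lf-underlying} differs between supersingular and ordinary strata (nodal reductions fall in the ordinary case, since every nodal Weierstrass equation over $\mathbb F_3$ has $a_2 \neq 0$), so I would split $\sum_E = \sum_{E\text{ ss}} + \sum_{E\text{ ord}}$, rewrite $\sum_{E\text{ ord}}$ as a difference, and apply Lemma~\ref{lf-modular} to the total sum to produce the $\sum_f a_3(f)$ term with coefficient $-\frac{3^{-10}-3^{-11}}{1-3^{-10}}$. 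A supersingular residue remains to be evaluated directly.

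The supersingular computation for $p=3$ is the only nonformal step, and I expect it to be the main obstacle. Any supersingular smooth curve over $\mathbb F_3$ admits a short Weierstrass form $y^2 = x^3 + Ax + B$ with $A \in \{1,2\}$ and $B \in \{0,1,2\}$, and the isomorphism $(x,y) \mapsto (u^2 x + r,\, u^3 y)$ changes $(A,B)$ to $(u^{-4}A,\, u^{-6}(B+(A+1)r))$ (using $r^3 = r$ in $\mathbb F_3$). From this one reads off four $\mathbb F_3$-isomorphism classes: the single class of $y^2 = x^3 + x$ (with $|\Aut_{\mathbb F_3}| = 2$ and $a_3 = 0$) together with the three twists $y^2 = x^3 - x + b$ for $b \in \mathbb F_3$ (each with $|\Aut_{\mathbb F_3}| = 6$ because $A=-1$ makes the translation action on $B$ trivial, freeing $r$ to vary) whose $a_3$-values are $0, 3, -3$. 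Substituting $a_{3^\nu}(E) = 3^{\nu/2} U_\nu(a_3(E)/(2\sqrt 3))$ and weighting by $1/|\Aut_{\mathbb F_3}|$ produces the closed form $3^{\nu/2 - 2}\bigl(U_\nu(3/(2\sqrt 3)) + 4 U_\nu(0) + U_\nu(-3/(2\sqrt 3))\bigr)$ appearing in the statement. The subtle point here is to count $\mathbb F_3$-automorphisms rather than $\overline{\mathbb F}_3$-automorphisms: the boost from $|\Aut| = 2$ to $|\Aut| = 6$ for the $A = -1$ twists comes from translations specific to characteristic $3$, not from the quartic scalar symmetry that only exists over larger fields.
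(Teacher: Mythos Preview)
Your proposal is correct and follows essentially the same route as the paper: decompose the $p$-adic integral into reduction strata via Lemma~\ref{lf-underlying}, apply Lemma~\ref{lf-modular} to convert the sum over $\overline{\mathcal M}_{1,1}(\mathbb F_p)$ into a sum over eigenforms, and for $p=3$ split off the supersingular contribution and evaluate it by listing the four supersingular classes with their $\mathbb F_3$-automorphism counts. Your explicit derivation of $|\Aut_{\mathbb F_3}|$ via the translation action (and your remark that nodal curves land in the ``ordinary'' stratum because $a_2\neq 0$) makes precise what the paper simply asserts, but the argument is otherwise the same.
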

\begin{proof} For the case $p>3$, we sum Lemma \ref{lf-underlying} over all possible $E$ and then apply Lemma \ref{lf-modular}.

For the case $p=3$, summing Lemma \ref{lf-underlying} over all possible $E$ gives 
\[ \ell_{3,\nu}(E)= \frac{ 1- 3^{-1} }{ 1- 3^{-10}}  \Bigl( \sum_{\substack{ E \in \overline{\mathcal M}_{1,1}(\mathbb F_3)\\ E \textrm{ supersingular}}} \frac{ a_{3^\nu}(E)}{ \abs{\Aut(E)}} + 3^{-10}  \sum_{\substack{ E \in \overline{\mathcal M}_{1,1}(\mathbb F_3)\\ E \textrm{ ordinary}}} \frac{ a_{3^\nu}(E)}{ \abs{\Aut(E)}}  \Bigr)\]
\[= ( 1- 3^{-1})\sum_{\substack{ E \in \overline{\mathcal M}_{1,1}(\mathbb F_3)\\ E \textrm{ supersingular}}} \frac{ a_{3^\nu}(E)}{ \abs{\Aut(E)}} + \frac{ 3^{-10} - 3^{-11} }{ 1- 3^{-10}}   \sum_{\substack{ E \in \overline{\mathcal M}_{1,1}(\mathbb F_3)}} \frac{ a_{3^\nu}(E)}{ \abs{\Aut(E)}}.\]
We can evaluate the second term using Lemma \ref{lf-modular} and evaluate the first term by explicitly listing all the isomorphism classes of supersingular elliptic curves, which are $y^2=x^3-x$, $y^2=x^3-x-1$, $y^2=x^3-x-2$, and $y^2=x^3+x$ with automorphism groups of orders $6,6,6,2$ respectively and with $a_3 = 0 , 3, -3, 0 $ respectively. This gives
\[ ( 1- 3^{-1})\sum_{\substack{ E \in \overline{\mathcal M}_{1,1}(\mathbb F_3)\\ E \textrm{ supersingular}}} \frac{ a_{3^\nu}(E)}{ \abs{\Aut(E)}}\]
\[ = \frac{2}{3}\Bigl (  \frac{ 3^{\frac{\nu}{2}} U_\nu\left(0\right) }{6}  + \frac{ 3^{\frac{\nu}{2}} U_\nu\left(\frac{3}{ 2\sqrt{3}} \right) }{6} + \frac{ 3^{\frac{\nu}{2}} U_\nu\left(\frac{-3}{ 2\sqrt{3}} \right) }{6}+ \frac{ 3^{\frac{\nu}{2}} U_\nu\left(0\right) }{2}  \Bigr)\]
\[ =3^{\frac{\nu}{2} -2} \Bigl(   U_\nu \left( \frac{3}{ 2 \sqrt{3} }\right) +4 U_\nu \left( 0\right) +  U_\nu \left( \frac{-3}{ 2 \sqrt{3}} \right) \Bigr) .\]
Combining these gives Lemma \ref{all-lf} in the case $p=3$.

For the case $p=2$, we sum Lemma \ref{lf-underlying} over all possible $E$ and apply Lemma \ref{lf-modular}. \end{proof}

\begin{lemma}\label{primes-lf-detailed} Fix a prime $p$ and positive integer $\nu$. Let $\tilde{\ell}_{p,\nu}$ be the quantity defined in Definition~\ref{lptilde}. If $p\neq3$ we have
\[ \tilde{\ell}_{p,\nu}= p^{-1}  \sum_{E \in \mathcal M_{1,1}(\mathbb F_p)} \frac{ a_{p^\nu}(E)}{ \abs{\Aut(E)}} =  - p^{-1} \Bigl( \frac{ 1 + (-1)^\nu}{2}  + \sum_{ \substack{ f \in S^{ \nu+2}_0 (\SL_2(\mathbb Z))\\ \emph{eigenform} \\ a_1(f)=1 }} a_p (f) \Bigr).\] 
If $p=3$ we have
\[ \tilde{\ell}_{3,\nu}= \frac{ 1}{ 1 + 2 \cdot 3^{-10}}  \Bigl( \sum_{\substack{ E \in \mathcal M_{1,1}(\mathbb F_3)\\ E \emph{ supersingular}}} \frac{ a_{3^\nu}(E)}{ \abs{\Aut(E)}} + 3^{-10}  \sum_{\substack{ E \in \mathcal M_{1,1}(\mathbb F_3)\\ E \emph{ ordinary}}} \frac{ a_{3^\nu}(E)}{ \abs{\Aut(E)}}  \Bigr)\] 
\[ =  \begin{cases}  \frac{ 3^{\frac{\nu}{2}}}{ 3(1 + 2 \cdot 3^{-10})} \Bigl(  U_\nu \left( \frac{3}{ 2 \sqrt{3} }\right) +2 U_\nu \left( 0\right)  +3^{-9} \Bigl(  U_\nu \left( \frac{2}{ 2 \sqrt{3} }\right)+U_\nu \left( \frac{1}{ 2 \sqrt{3} }\right)  \Bigr)\Bigr)  & \textrm{if } \nu \textrm{ even} \\ 0 & \textrm{if }\nu \textrm{ odd} \end{cases}. \]
\end{lemma}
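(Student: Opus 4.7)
The approach is to follow the template of the proof of Lemma \ref{all-lf-detailed} given in the excerpt, but summing only over smooth reductions (i.e., over $\mathcal{M}_{1,1}(\mathbb{F}_p)$ rather than the compactification $\overline{\mathcal{M}}_{1,1}(\mathbb{F}_p)$). First I would apply Lemma \ref{lf-underlying} to re-express both the numerator and denominator in the definition of $\tilde{\ell}_{p,\nu}$ as sums over $E \in \mathcal{M}_{1,1}(\mathbb{F}_p)$ of the measure of the set of pairs $(A,B)\in\mathbb{Z}_p^2$ whose reduction is $E$, weighted by $a_{p^\nu}(E)$ in the numerator.

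For $p>3$ the fiber measure from Lemma \ref{lf-underlying} is $(p^{-1}-p^{-2})/\abs{\Aut(E)}$, a common factor that cancels in the ratio. This leaves $\tilde{\ell}_{p,\nu} = (\sum_E a_{p^\nu}(E)/\abs{\Aut(E)})/\sum_E 1/\abs{\Aut(E)}$, and the Eichler--Deuring mass formula $\sum_{E \in \mathcal{M}_{1,1}(\mathbb{F}_p)} 1/\abs{\Aut(E)} = p$ supplies the prefactor $p^{-1}$. Lemma \ref{lf-modular} then expresses $\sum_{\overline{\mathcal{M}}_{1,1}(\mathbb{F}_p)} a_{p^\nu}(E)/\abs{\Aut(E)}$ as minus the sum of Hecke eigenvalues; subtracting the two nodal contributions (each of automorphism order $2$, giving $a_{p^\nu}$ values $1^\nu$ and $(-1)^\nu$, summing to $(1+(-1)^\nu)/2$) isolates the smooth sum and yields the stated formula. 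The identical argument works for $p=2$, where the additional common factor $2^{-8}$ from Lemma \ref{lf-underlying} simply drops out of the ratio.

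For $p=3$ the fiber measure differs between supersingular (factor $3$) and ordinary (factor $3^{-9}$) reductions, so I split the sum. The explicit list of supersingular curves given in the proof of Lemma \ref{all-lf-detailed} has automorphism orders $6,6,6,2$, summing to mass $1$; by subtraction from the total smooth mass $p=3$ the ordinary mass is $2$. Combining, the denominator simplifies to $(2/3)(1+2\cdot 3^{-10})$ and the numerator to $(2/3)\bigl(\sum_{ss} + 3^{-10}\sum_{ord}\bigr)$, giving the first displayed expression. The supersingular sum is already evaluated in the proof of Lemma \ref{all-lf-detailed}. For the ordinary sum, I would enumerate the four ordinary isomorphism classes over $\mathbb{F}_3$ (corresponding to $j$-invariants $j=1,2$ together with their quadratic twists, each with automorphism group of order $2$) and compute their Frobenius traces by direct point-counting on representatives such as $y^2 = x^3 + x^2 + 1$ and $y^2 = x^3 + x^2 + 2$, obtaining the values $a_3 \in \{1,-1,2,-2\}$. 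For even $\nu$, the identity $U_\nu(-x)=U_\nu(x)$ then assembles the Chebyshev expression in the lemma; for odd $\nu$, the corresponding cancellations and the vanishing $U_\nu(0)=0$ force both $\sum_{ss}$ and $\sum_{ord}$ to zero, giving $\tilde{\ell}_{3,\nu}=0$.

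The main obstacle is the $p=3$ case: unlike the other primes where Lemma \ref{lf-modular} cleanly converts everything to a sum over modular forms at once, here the distinct measure factors for supersingular and ordinary reductions force a separate treatment of the two strata. This in turn requires an explicit point-counting enumeration of the ordinary isomorphism classes, since the bijection with cusp forms supplied by Lemma \ref{lf-modular} sees only the total smooth-plus-nodal sum and cannot be restricted to ordinary curves alone.
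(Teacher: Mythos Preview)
Your proposal is correct and follows essentially the same approach as the paper: apply Lemma~\ref{lf-underlying} to both numerator and denominator, cancel the common factor (including the extra $2^{-8}$ when $p=2$), use the mass identity $\sum_{E\in\mathcal M_{1,1}(\mathbb F_p)}1/\abs{\Aut(E)}=p$, invoke Lemma~\ref{lf-modular} after subtracting the two nodal contributions, and for $p=3$ enumerate the supersingular and ordinary isomorphism classes explicitly (the paper uses exactly the representatives and quadratic-twist pairing you describe). One terminological quibble: the identity $\sum_{E}1/\abs{\Aut(E)}=p$ is not the Eichler--Deuring mass formula (which concerns supersingular curves over $\overline{\mathbb F}_p$ and gives $(p-1)/24$) but the more elementary fact that each $j$-invariant in $\mathbb F_p$ contributes total mass~$1$.
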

\begin{proof} For the case $p>3$, we sum Lemma \ref{lf-underlying} over all possible $E$ to obtain
\[\int_{ \{ A, B\in \mathbb Z_p \mid E_{A,B} \textrm{ has good reduction and }p^4 \nmid A \textrm{ or } p^6 \nmid B\} } a_{p^\nu}(E_{A,B}) = (p^{-1} - p^{-2}) \sum_{E \in \mathcal M_{1,1}(\mathbb F_p)} \frac{ a_{p^\nu}(E)}{ \abs{\Aut(E)}} \]
and the measure of the set $\{ A, B\in \mathbb Z_p \mid E_{A,B} \textrm{ has good reduction and }p^4 \nmid A \textrm{ or } p^6 \nmid B\}$ is $(p^{-1} -p^{-2}) \sum_{E \in \mathcal M_{1,1}(\mathbb F_p)} \frac{1}{ \abs{\Aut(E)}} = (p^{-1} - p^{-2} ) p$, since each $j$ invariant in $\mathbb F_p$ contributes $1$. Dividing, we obtain
\[ \tilde{\ell}_{p,\nu} = \frac{  (p^{-1} - p^{-2}) \sum_{E \in \mathcal M_{1,1}(\mathbb F_p)} \frac{ a_{p^\nu}(E)}{ \abs{\Aut(E)}} }{ (p^{-1} - p^{-2} ) p }  = p^{-1} \sum_{E \in \mathcal M_{1,1}(\mathbb F_p)} \frac{ a_{p^\nu}(E)}{ \abs{\Aut(E)}} \]
and furthermore Lemma \ref{lf-modular} gives
\[ \sum_{E \in \mathcal M_{1,1}(\mathbb F_p)} \frac{ a_{p^\nu}(E)}{ \abs{\Aut(E)}}=  \sum_{E \in \overline{\mathcal{M}}_{1,1}(\mathbb F_p)} \frac{ a_{p^\nu}(E)}{ \abs{\Aut(E)} }- \frac{ 1 + (-1)^\nu}{2} = - \frac{1 + (-1)^\nu}{2} -  \sum_{ \substack{ f \in S^{ \nu+2}_0 (\SL_2(\mathbb Z))\\ \textrm{eigenform} \\ a_1(f)=1 }} a_p (f)\]
since there are two isomorphism classes of nodal curves of genus $1$ over $\mathbb F_p$, the split and non-split, which have $a_{p^\nu}$ equal to $1$ and $(-1)^\nu$ respectively and automorphism group of order $2$. 

For the case $p=3$, the sum of Lemma \ref{lf-underlying} over all possible $E$ gives 
\[\int_{ \{ A, B\in \mathbb Z_3 \mid E_{A,B} \textrm{ has good reduction and }3^4 \nmid A \textrm{ or } 3^6 \nmid B\} } a_{3^\nu}(E_{A,B})\] \[ = (3^{-1} - 3^{-2}) \Bigl(  \sum_{\substack{ E \in \mathcal M_{1,1}(\mathbb F_3)\\ E \textrm{ supersingular}}} \frac{ a_{3^\nu}(E)}{ \abs{\Aut(E)}} + 3^{-10}  \sum_{\substack{ E \in \mathcal M_{1,1}(\mathbb F_3)\\ E \textrm{ ordinary}}} \frac{ a_{3^\nu}(E)}{ \abs{\Aut(E)}}\Bigr)\]
and the measure of $ \{ A, B\in \mathbb Z_3 \mid E_{A,B} \textrm{ has good reduction and }3^4 \nmid A \textrm{ or } 3^6 \nmid B\} $ is
\[ (3^{-1} - 3^{-2}) \Bigl( \sum_{\substack{ E \in \mathcal M_{1,1}(\mathbb F_3)\\ E \textrm{ supersingular}}} \frac{1}{ \abs{\Aut(E)}} + 3^{-10}  \sum_{\substack{ E \in \mathcal M_{1,1}(\mathbb F_3)\\ E \textrm{ ordinary}}} \frac{ 1}{ \abs{\Aut(E)}}\Bigr) = (3^{-1} - 3^{-2}) ( 1 + 2 \cdot 3^{-10} ) \] because the one supersingular $j$ invariant, $j=0$, contributes $1$ and the two ordinary $j$ invariants, $j=1$ and $j=2$, contribute $3^{-10}$ each.  Dividing, we obtain
\[ \tilde{ \ell}_{3,\nu}(E)= \frac{ 1}{ 1 + 2 \cdot 3^{-10}}  \Bigl( \sum_{\substack{ E \in \mathcal M_{1,1}(\mathbb F_3)\\ E \textrm{ supersingular}}} \frac{ a_{3^\nu}(E)}{ \abs{\Aut(E)}} + 3^{-10}  \sum_{\substack{ E \in \mathcal M_{1,1}(\mathbb F_3)\\ E \textrm{ ordinary}}} \frac{ a_{3^\nu}(E)}{ \abs{\Aut(E)}}  \Bigr).\] 

If $\nu$ is odd, then $a_{3^{\nu}}$ takes opposite values on each elliptic curve and its quadratic twist, cancelling those terms and giving the sum a value of $0$. If $\nu$ is even, then $a_{3^\nu}$ takes equal values on the curve and its quadratic twist, so we may merge those terms and (as long as the elliptic curve is not equal to its quadratic twist) cancel the $2$ in the order of the automorphism group. (The same facts are true in every characteristic, but only here do they significantly simplify the formula).

Having done this, the supersingular quadratic twist pair $y^2=x^3-x-1$ and $y^2=x^3-x-2$ contributes $\frac{1}{3}  3^{\frac{\nu}{2}} U_\nu \left( \frac{3}{ 2 \sqrt{3} }\right) $ to the sum over supersingular $E$,  the supersingular curve $y^2=x^3-x$ isomorphic to its own quadratic twist contributes $\frac{1}{6}3^{\frac{\nu}{2}} U_\nu(0)$, and the supersingular curve $y^2=x^3+x$ isomorphic to its own quadratic twist contributes $\frac{1}{2} 3^{\frac{\nu}{2}} U_\nu(0)$. The ordinary quadratic twist pair $y^2 =x^3+x^2 +1$ and $y^2 = x^3 -x^2-1$ contribute $3^{-10} 3^{\frac{\nu}{2}} U_\nu \left( \frac{2}{ 2 \sqrt{3} }\right)  $ and the ordinary quadratic twist pair $y^2 = x^3 + x^2-1$ and $y^2=x^3 - x^2 +1$ contribute $3^{-10}3^{\frac{\nu}{2}} U_\nu \left( \frac{1}{ 2 \sqrt{3} }\right)  $. Pulling out the factor of $3$ proves the case $p=3$.

The case $p=2$ is identical to the case $p>3$ except the additional factor $2^{-8}$ appears on both the numerator and denominator and is canceled. \end{proof}

\begin{lemma}\label{prime-lf-detailed} Fix a prime $p$ and a nonnegative integer $\nu$. Let $\hat{\ell}_{p,\nu}$ be the quantity defined in Definition \ref{lphat}. For $p>3$ we have
\[ \hat{\ell}_{p, 0 } =   \frac{1- p^{-1}}{ 1- p^{-10}}  \]
\[ \hat{\ell}_{p,1} =  0  \]
\[ \hat{\ell}_{p,2} =  - \frac{p-p^{-1} +p^{-2}- p^{-8} }{ (1-p^{-10}) (p-1) }   \]
and for $\nu>2$
\begin{align*}
\hat{\ell}_{p,\nu} &= \frac{p^{-1}- p^{-2}}{1-p^{-10}} \Bigl(  \sum_{E \in {\mathcal M}_{1,1}(\mathbb F_p)} \frac{ a_{p^\nu}(E)}{ \abs{\Aut(E)}} - p  \frac{ 1 + (-1)^\nu}{2}  \Bigr)\\
&= -  \frac{p^{-1}- p^{-2}}{1-p^{-10}} \Bigl(  \frac{p+1}{2} ( 1 + (-1)^\nu )  + \!\!\!\!\! \sum_{ \substack{ f \in S^{ \nu+2}_0 (\SL_2(\mathbb Z))\\ \emph{eigenform} \\ a_1(f)=1 }}\!\!\!\! a_p (f) \Bigr).
\end{align*}
If $p=3$ we have
\[ \hat{\ell}_{3, 0 } =   (1-3^{-10})^{-1}  \left( \frac{2}{3} + \frac{4}{3^{11}} \right) \]
\[ \hat{\ell}_{3,1} =  0\]
\[ \hat{\ell}_{3,2} = -\frac{ 3 - 3^{-7}+16 \cdot 3^{-11}   } {2  (1-3^{-10} )} \]
and for $\nu>2$
\[\hat{\ell}_{3,\nu} = (1-3^{-10} )^{-1} \frac{2}{3} \Bigl( \sum_{\substack{ E \in \mathcal M_{1,1}(\mathbb F_3)\\ E \emph{ supersingular}}} \frac{ a_{3^\nu}(E)}{ \abs{\Aut(E)}} + 3^{-10}  \sum_{\substack{ E \in \mathcal M_{1,1}(\mathbb F_3)\\ E \emph{ ordinary}}} \frac{ a_{3^\nu}(E)}{ \abs{\Aut(E)}}  -  3^{-9} \frac{ 1 + (-1)^\nu}{2}  \Bigr)\] 
\[ =  \begin{cases} \frac{2}{9} (1-3^{-10} )^{-1}  \Bigl( 3^{ \frac{\nu}{2}} \Bigl(  U_\nu\! \left( \frac{3}{ 2 \sqrt{3} }\right)\! +2 U_\nu\! \left( 0\right)  +3^{-9} \Bigl(  U_\nu\! \left( \frac{2}{ 2 \sqrt{3} }\right)\!+U_\nu\! \left( \frac{1}{ 2 \sqrt{3} }\right) \Bigr) \Bigr)  -3^{-8}  \Bigr)  & \textrm{if } \nu \textrm{ even} \\ 0 & \textrm{if }\nu \textrm{ odd} \end{cases}. \]
If $p=2$ we have
\[ \hat{\ell}_{2, 0 } =  \frac{2^{-9}}{ 1- 2^{-10}} \]
\[ \hat{\ell}_{2,1} =  0\]
\[ \hat{\ell}_{2,2} =  - \frac{ 4 - 2^{-6}  + 3  \cdot 2^{-10} }{1- 2^{-10}} \]
and for $\nu>2$
\[\hat{\ell}_{2,\nu} = \frac{1}{ 2^{10}-1}  \Bigl(  \sum_{E \in {\mathcal M}_{1,1}(\mathbb F_2)} \frac{ a_{2^\nu}(E)}{ \abs{\Aut(E)}} - 1 - (-1)^\nu   \Bigr)   = -  \frac{1}{ 2^{10}-1} \Bigl(  \frac{3}{2} ( 1 + (-1)^\nu)  +\!\!\!\!\!\!\!\! \sum_{ \substack{ f \in S^{ \nu+2}_0 (\SL_2(\mathbb Z))\\ \emph{eigenform} \\ a_1(f)=1 }}\!\!\!\!\!\! a_2 (f) \Bigr).\]
\end{lemma}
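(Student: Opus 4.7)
The proof of Lemma \ref{prime-lf-detailed} would follow the same template as Lemma \ref{primes-lf-detailed}: I would express each integral defining $\hat{\ell}_{p,\nu}$ in Definition \ref{lphat} as a weighted sum over the three reduction strata (good, multiplicative, additive) at $p$, then evaluate using Lemma \ref{lf-underlying} for the $p$-adic measure of each stratum and Lemma \ref{lf-modular} for the trace formula over $\overline{\mathcal M}_{1,1}(\mathbb F_p)$.

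The first step would be to tabulate the measure of each reduction stratum inside $\{(A,B) \in \mathbb Z_p^2 : p^4 \nmid A \text{ or } p^6 \nmid B\}$. For $p>3$, summing Lemma \ref{lf-underlying} over smooth $E \in \mathcal M_{1,1}(\mathbb F_p)$ using the identity $\sum_E 1/|\Aut(E)|=p$ gives good-reduction measure $1-p^{-1}$; the two nodal classes contribute multiplicative-reduction measure $p^{-1}-p^{-2}$; and the additive stratum receives the complementary measure $p^{-2}-p^{-10}$. At $p=3$ the smooth stratum further subdivides into supersingular (weight $1-3^{-1}$) and ordinary (weight $3^{-10}(1-3^{-1})$) pieces, with the nodal classes falling under the ordinary weight; at $p=2$ an extra factor $2^{-10}$ is inserted throughout. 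These computations immediately yield the $\nu=0$ formula, since $\hat{\ell}_{p,0}$ is $(1-p^{-10})^{-1}$ times the good-reduction measure.

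For $\nu=1$ I would show vanishing as follows: the smooth contribution $(p^{-1}-p^{-2})\sum_{E \in \mathcal M_{1,1}(\mathbb F_p)} a_p(E)/|\Aut(E)|$ vanishes because $a_p$ reverses sign under quadratic twist and any curve isomorphic to its own twist has $a_p=0$, while the multiplicative contribution vanishes because the split and non-split nodal curves contribute $+1$ and $-1$ which cancel. For $\nu>2$, Definition \ref{lphat} restricts integration to good or multiplicative reduction with weights $1$ and $-p$, and Lemma \ref{lf-underlying} yields
\[ \hat{\ell}_{p,\nu} = \frac{p^{-1}-p^{-2}}{1-p^{-10}}\Bigl(\sum_{E \in \mathcal M_{1,1}(\mathbb F_p)} \frac{a_{p^\nu}(E)}{|\Aut(E)|} - p \cdot \frac{1+(-1)^\nu}{2}\Bigr) \]
for $p>3$. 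Using Lemma \ref{lf-modular} together with the nodal values $(1^\nu+(-1)^\nu)/2$ to rewrite the smooth sum as $-\sum_f a_p(f) - (1+(-1)^\nu)/2$, the identity $1-p^{-2}=(p+1)(p^{-1}-p^{-2})$ produces the factor $(p+1)/2$ in the claimed formula; analogous computations at $p=2,3$ use the adjusted measures and, at $p=3$, the explicit list of supersingular curves recorded in the proof of Lemma \ref{all-lf-detailed}.

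The $\nu=2$ case is the most intricate because all three strata contribute and the additive stratum is weighted by the constant $-p^2/(p-1)$. The crucial simplification is that $S_0^4(\SL_2(\mathbb Z))=0$, which via Lemma \ref{lf-modular} forces $\sum_{E \in \overline{\mathcal M}_{1,1}(\mathbb F_p)} a_{p^2}(E)/|\Aut(E)|=0$, so the smooth sum equals $-1$. Combining the good contribution, the multiplicative contribution weighted by $-p$, and $-p^2/(p-1)$ times the additive measure, then clearing denominators, would produce the formulas at $p>3$, $p=3$, and $p=2$. The main obstacle is not conceptual but bookkeeping: tracking three differently-weighted contributions (with an additional supersingular/ordinary subdivision at $p=3$) through the algebra to the compact claimed forms.
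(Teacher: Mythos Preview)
Your proposal is correct and follows essentially the same approach as the paper: stratify by reduction type, compute the measure of each stratum via Lemma~\ref{lf-underlying} (with the supersingular/ordinary subdivision at $p=3$ and the $2^{-8}$ factor at $p=2$), use quadratic-twist cancellation for $\nu=1$, and invoke Lemma~\ref{lf-modular} together with the vanishing of $S_0^4(\SL_2(\mathbb Z))$ for the $\nu=2$ algebra. The paper carries out the $p=2,3$ and $\nu=2$ bookkeeping explicitly where you only sketch it, but there is no conceptual difference.
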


\begin{proof}
For the case $p>3$, we sum Lemma \ref{lf-underlying} over all possible $E$. We split further into cases depending on the value of $\nu$.

For $\nu=0$ we obtain

\[ \hat{\ell}_{p,0} = (1-p^{-10})^{-1} \sum_{E \in \mathcal M_{1,1}(\mathbb F_p)} \frac{p^{-1} - p^{-2}}{ \abs{\Aut(E)}}  = \frac{1- p^{-1}}{1-p^{-10}} \] since each $j$ invariant contributes $1$ to $\sum_{E \in \mathcal M_{1,1}(\mathbb F_p)} \frac{1}{ \abs{\Aut(E)}}$.

For $\nu=1$, the contribution of each elliptic curve cancels with its quadratic twist and we get zero.

For $\nu=2$, there is a subtlety as elliptic curves with additive reduction contribute. To calculate the measure of the set of $A,B$ with $p^4\nmid A$ or $p^6\nmid B$ such that $E_{A,B}$ has additive reduction at $p$, we take the total measure $1- p^{-10}$ and subtract the measure $(p^{-1} -p^{-2} ) (p+1) = 1 - p^{-2}$ of elliptic curves with good or multiplicative reduction, obtained via Lemma~\ref{lf-underlying}, to see that curves with additive reduction have measure $p^{-2} - p^{-10}$. This implies that
\[ \begin{aligned}  \hat{\ell}_{p,2} &= (1-p^{-10} )^{-1}  \int_{ \substack{A, B\in \mathbb Z_p \\ p^4\nmid A \textrm{ or } p^6 \nmid B \\ p^2 \nmid N(E_{A,B}) }} \cdot \begin{cases}  a_{p^2} & \textrm{if }E_{A,B}\textrm{ has good reduction} \\ -p a_{p^2} & \textrm{if }E_{A,B}\textrm{ has multiplicative reduction} \\  - \frac{p^2}{p-1} & \textrm{if }E_{A,B} \textrm{ has additive reduction} \end{cases} \\  &= (1-p^{-10})^{-1} \Bigl( \sum_{E \in \mathcal M_{1,1}(\mathbb F_p)}\!\!\!\!(p^{-1} - p^{-2}) \frac{ a_{p^2}}{\abs{\Aut(E)}}  - p (p^{-1} -p^{-2} )  - \frac{p^2}{p-1} (p^{-2} - p^{-10} ) \Bigr)\\ &= (1-p^{-10})^{-1} \Bigl( \sum_{E \in \overline{ \mathcal M}_{1,1}(\mathbb F_p) }\!\!\!\!(p^{-1} -p^{-2} ) \frac{ a_{p^2}}{\abs{\Aut(E)}}  - (p+1) (p^{-1} -p^{-2} )  - \frac{p^2}{p-1} (p^{-2} - p^{-10} ) \Bigr)\\
&= (1-p^{-10})^{-1} \Bigl( - (p+1) (p^{-1} - p^{-2} ) - \frac{p^2}{p-1} (p^{-2} - p^{-10} ) \Bigr) \\  &= \frac{  - (1 -p^{-1} - p^{-2} + p^{-3} ) - (p^{-1} - p^{-9} )  }{( 1- p^{-10}) (1-p^{-1})} = - \frac{1 -p^{-2} + p^{-3} - p^{-9} } {( 1- p^{-10}) (1-p^{-1})},\end{aligned}\]
where we use that $a_{p^2}=1$ for curves with multiplicative reduction (twice) and then apply Lemma \ref{lf-modular}.

For $\nu>2$, the strategy is similar except that we do not need to consider curves with additive reduction, obtaining
\[\begin{aligned} \hat{\ell}_{p,\nu} &= (1-p^{-10} )^{-1}  \int_{ \substack{A, B\in \mathbb Z_p \\ p^4\nmid A \textrm{ or } p^6 \nmid B \\ p^2 \nmid N(E_{A,B}) }}   a_{p^\nu} (E_{A,B})  \cdot \begin{cases} 1 & \text{if } p \nmid N(E_{A,B}) \\ -p& \textrm{if } p \mid N(E_{A,B}) \end{cases}\\ &=  (1-p^{-10})^{-1} \Bigl( \sum_{E \in \mathcal M_{1,1}(\mathbb F_p)}(p^{-1} - p^{-2}) \frac{ a_{p^\nu}}{\abs{\Aut(E)}}  - p(p^{-1} -p^{-2} ) \frac{ 1 + (-1)^\nu}{2} \Bigr)\\ &=\frac{p^{-1} - p^{-2}}{1-p^{-10}}  \Bigl( \sum_{E \in \overline{ \mathcal M}_{1,1}(\mathbb F_p) }\frac{ a_{p^\nu}}{\abs{\Aut(E)}}  - (p+1)  \frac{ 1 + (-1)^\nu}{2} \Bigr)\\ &= - \frac{p^{-1}- p^{-2}}{1-p^{-10}} \Bigl(  (1 + (-1)^\nu) \frac{p+1}{2}  + \sum_{ \substack{ f \in S^{ \nu+2}_0 (\SL_2(\mathbb Z))\\ \textrm{eigenform} \\ a_1(f)=1 }} a_p (f) \Bigr).\end{aligned}\]

We now consider the case $p=2$. For every value of $\nu$ except $2$, every term appearing is simply multiplied by the additional $2^{-8}$ factor that appears in Lemma \ref{lf-underlying} if $p=2$, so we can take the formulas for $p>3$, specialize $p$ to $2$, and then multiply by $2^{-8}$. For $\nu=2$, we must calculate the measure of the set of $A,B$ such that $E_{A,B}$ has additive reduction at $2$. This can be obtained by subtracting from $1-2^{-10}$ the measure $(2^{-1} - 2^{-2}) 2^{-8} (2+1) = 3 \cdot 2^{-10}$ of elliptic curves with good or multiplicative reduction. This implies that curves with additive reduction have measure $1 - 2^{-8}$. Hence
\[ \begin{aligned}  \hat{\ell}_{2,2} &=  (1-2^{-10} )^{-1}  \int_{ \substack{A, B\in \mathbb Z_2 \\ 2^4\nmid A \textrm{ or } 2^6 \nmid B \\ 2^2 \nmid N(E_{A,B}) }} \cdot \begin{cases}  a_{2^2} & \textrm{if }E_{A,B}\textrm{ has good reduction} \\ -2 a_{2^2} & \textrm{if }E_{A,B}\textrm{ has multiplicative reduction} \\  - \frac{2^2}{2-1} & \textrm{if }E_{A,B} \textrm{ has additive reduction} \end{cases} \\  &= (1-2^{-10})^{-1} \Bigl( \sum_{E \in \mathcal M_{1,1}(\mathbb F_2)} 2^{-10} \frac{ a_{2^2}}{\abs{\Aut(E)}}  - 2 \cdot 2^{-10}   - \frac{2^2}{2-1} (1 - 2^{-8}  ) \Bigr)\\ &= (1-2^{-10})^{-1} \Bigl( \sum_{E \in \overline{ \mathcal M}_{1,1}(\mathbb F_p) }2^{-10} \frac{ a_{p^2}}{\abs{\Aut(E)}}  - (2+1) \cdot 2^{-10}  - \frac{2^2}{2-1} (1 - 2^{-8}) \Bigr)\\ 
&= (1-2^{-10})^{-1} \Bigl( -3 \cdot 2^{-10}  -4 \cdot (1 - 2^{-8}  ) \Bigr) \\  &= -\frac{  4 - 2^{-6} + 3\cdot 2^{-10}}{ 1- 2^{-10}} .\end{aligned}\]

We finally consider the case $p=3$. First we take $\nu=0$. Lemma \ref{lf-underlying} gives   \[\begin{aligned} \hat{\ell}_{3,0}(E) &= \frac{ 1- 3^{-1} }{ 1- 3^{-10}}  \Bigl( \sum_{\substack{ E \in \overline{\mathcal M}_{1,1}(\mathbb F_3)\\ E \textrm{ supersingular}}}\frac{1}{ \abs{\Aut(E)}} + 3^{-10}  \sum_{\substack{ E \in \overline{\mathcal M}_{1,1}(\mathbb F_3)\\ E \textrm{ ordinary}}} \frac{ 1}{ \abs{\Aut(E)}}  \Bigr)\\
&=  \frac{ 1- 3^{-1} }{ 1- 3^{-10}}  \Bigl( 1 + 2 \cdot 3^{-10} \Bigr),\end{aligned}\]
since there is one supersingular $j$-invariant over $\mathbb F_3$ and two ordinary $j$-invariants over $\mathbb F_3$, with each $j$-invariant contributing $1$ to the corresponding sum.

For $\nu=1$, the contribution of each elliptic curve cancels with its quadratic twist and we get zero.

For $\nu=2$, we again calculate the density of additive reduction by subtraction. In this case, the total measure of curves with supersingular reduction is $1-3^{-1}$, the two ordinary $j$-invariants contribute $2\cdot (1-3^{-1}) \cdot 3^{-10}$, and the curves with multiplicative reduction contribute $ (1-3^{-1}) \cdot 3^{-10}$, for a total of $(1-3^{-1}) ( 1 + 3^{-9}) = 2 \cdot 3^{-1} + 2 \cdot 3^{-10} $. Subtracting this from the total measure $1-3^{-10}$, we see that the curves with additive reduction have measure $1-  2 \cdot 3^{-1} - 3^{-10} - 2 \cdot 3^{-10} = 3^{-1} - 3^{-9}$. This gives
\[(1-3^{-10} )\hat{\ell}_{3,2} = (1- 3^{-1} ) \Bigl(\!\!\!\!\sum_{\substack{ E \in \mathcal M_{1,1}(\mathbb F_3)\\ E \textrm{ supersingular}}}\!\!\!\! \frac{ a_{3^2}(E)}{ \abs{\Aut(E)}} + 3^{-10} \!\!\!\!\!\!\!\! \sum_{\substack{ E \in \mathcal M_{1,1}(\mathbb F_3)\\ E \textrm{ ordinary}}}\! \frac{ a_{3^2}(E)}{ \abs{\Aut(E)}}  -  3 \cdot 3^{-10} \Bigr)  - \frac{3^2}{3\!-\!1} ( 3^{-1} - 3^{-9}).\]
We now evaluate the sums by enumerating isomorphism classes of elliptic curves. We again use the observation that $a_{3^\nu}$ takes equal values on the curve and its quadratic twist, so we may merge those terms and (as long as the elliptic curve is not equal to its quadratic twist) cancel the $2$ in the order of the automorphism group. We also note the formula $a_{3^2}= a_3^2 - 3$.

Having done this, the supersingular quadratic twist pair $y^2=x^3-x-1$ and $y^2=x^3-x-2$ has $a_3 = \pm 3$, so  $a_{3^2} =6$, and thus contributes  $\frac{1}{3}  6=2 $ to the sum over supersingular $E$,  the supersingular curve $y^2=x^3-x$ isomorphic to its own quadratic twist has $a_3=0$, so $a_{3^2}=-3$, and thus contributes $-\frac{1}{6}3 =- \frac{1}{2}$, and the supersingular curve $y^2=x^3+x$ isomorphic to its own quadratic twist has $a_3=0$, so $a_{3^2}=-3$, and so contributes $-\frac{1}{2} 3$. Hence the sum over supersingular curves vanishes. The ordinary quadratic twist pair $y^2 =x^3+x^2 +1$ and $y^2 = x^3 -x^2-1$ have $a_3 = \pm 2$ so $a_{3^2}=1$ and so contribute  $1 $ to the sum over ordinary $E$ and the ordinary quadratic twist pair $y^2 = x^3 + x^2-1$ and $y^2=x^3 - x^2 +1$ has $a_3=\pm 1$ so $a_{3^2} = -2$ and so contribute $-2$. Thus the sum over ordinary $E$ is $-1$. Plugging in these values we obtain
\[(1-3^{-10} ) \hat{\ell}_{3,2} = (1- 3^{-1} ) \Bigl(- 3^{-10}   -  3 \cdot 3^{-10} \Bigr) - \frac{3^2}{3-1} ( 3^{-1} - 3^{-9})    =-  \frac{ 16 \cdot 3^{-11} + 3 - 3^{-7} } {2  }  .\]

Finally, we handle the case $\nu>2$. Summing Lemma \ref{lf-underlying} gives
 
\[\begin{aligned}(1-3^{-10} ) \hat{\ell}_{3,\nu} &=   (1- 3^{-1} )  \Bigl(\!\! \sum_{\substack{ E \in \mathcal M_{1,1}(\mathbb F_3)\\ E \textrm{ supersingular}}}\!\!\!\! \frac{ a_{3^\nu}(E)}{ \abs{\Aut(E)}} + 3^{-10} \!\!\!\!\!\!\!  \sum_{\substack{ E \in \mathcal M_{1,1}(\mathbb F_3)\\ E \textrm{ ordinary}}} \frac{ a_{3^\nu}(E)}{ \abs{\Aut(E)}}  -  3 \cdot 3^{-10} \frac{ 1+ (-1)^\nu}{2}  \Bigr)\\
&=  \frac{2}{3} \Bigl( \sum_{\substack{ E \in \mathcal M_{1,1}(\mathbb F_3)\\ E \textrm{ supersingular}}} \frac{ a_{3^\nu}(E)}{ \abs{\Aut(E)}} + 3^{-10}  \sum_{\substack{ E \in \mathcal M_{1,1}(\mathbb F_3)\\ E \textrm{ ordinary}}} \frac{ a_{3^\nu}(E)}{ \abs{\Aut(E)}}  -  3^{-9} \frac{ 1 + (-1)^\nu}{2}  \Bigr)\end{aligned}\]

For $\nu$ odd, the sum vanishes because each curve cancels with its quadratic twist. We again evaluate the sums by summing over quadratic twist pairs. The supersingular quadratic twist pair $y^2=x^3-x-1$ and $y^2=x^3-x-2$ contributes $\frac{1}{3}  3^{\frac{\nu}{2}} U_\nu \left( \frac{3}{ 2 \sqrt{3} }\right) $ to the sum over supersingular $E$,  the supersingular curve $y^2=x^3-x$ isomorphic to its own quadratic twist contributes $\frac{1}{6}3^{\frac{\nu}{2}} U_\nu(0)$, and the supersingular curve $y^2=x^3+x$ isomorphic to its own quadratic twist contributes $\frac{1}{2} 3^{\frac{\nu}{2}} U_\nu(0)$. This gives
\[  \sum_{\substack{ E \in \mathcal M_{1,1}(\mathbb F_3)\\ E \textrm{ supersingular}}}\!\! \frac{ a_{3^\nu}(E)}{ \abs{\Aut(E)}}  = \frac{1}{3}  3^{\frac{\nu}{2}} U_\nu\! \left( \frac{3}{ 2 \sqrt{3} }\right) + \frac{1}{6}3^{\frac{\nu}{2}} U_\nu(0)+\frac{1}{2} 3^{\frac{\nu}{2}} U_\nu(0)= \frac{1}{3}  3^{\frac{\nu}{2}} U_\nu\! \left( \frac{3}{ 2 \sqrt{3} }\right) + \frac{2}{3}3^{\frac{\nu}{2}} U_\nu(0) .\]

The ordinary quadratic twist pair $y^2 =x^3+x^2 +1$ and $y^2 = x^3 -x^2-1$ contribute $ 3^{\frac{\nu}{2}} U_\nu \left( \frac{2}{ 2 \sqrt{3} }\right)$, and the ordinary quadratic twist pair $y^2 = x^3 + x^2-1$ and $y^2=x^3 - x^2 +1$ contribute $3^{\frac{\nu}{2}} U_\nu \left( \frac{1}{ 2 \sqrt{3} }\right)  $. This gives
\[ \sum_{\substack{ E \in \mathcal M_{1,1}(\mathbb F_3)\\ E \textrm{ ordinary}}} \frac{ a_{3^\nu}(E)}{ \abs{\Aut(E)}}  = 3^{\frac{\nu}{2}} U_\nu \left( \frac{2}{ 2 \sqrt{3} }\right) +3^{\frac{\nu}{2}} U_\nu \left( \frac{1}{ 2 \sqrt{3} }\right).\]
Plugging these in, we obtain for $\nu>2 $ even
\[
(1-3^{-10} )\hat{\ell}_{3,\nu} = \frac{2}{3} \! \left( \frac{1}{3}  3^{\frac{\nu}{2}} U_\nu\! \left( \frac{3}{ 2 \sqrt{3} }\right)\!+\! \frac{2}{3}3^{\frac{\nu}{2}} U_\nu(0)\!+\! 3^{-10} 3^{\frac{\nu}{2}} U_\nu\! \left( \frac{2}{ 2 \sqrt{3} }\right)\!+\! 3^{-10} 3^{\frac{\nu}{2}} U_\nu\! \left( \frac{1}{\! 2 \sqrt{3} }\right)\!-\! 3^{-9}\!  \right).
\]

We simplify by pulling out a factor of $3^{-1}$ and then pulling out a factor of $3^{\frac{\nu}{2}}$, giving
\[  (1-3^{-10} )\hat{\ell}_{3,\nu}=\frac{2}{9}  \left(   3^{\frac{\nu}{2}} \left( U_\nu \left( \frac{3}{ 2 \sqrt{3} }\right) + 2  U_\nu(0) + 3^{-9}U_\nu \left( \frac{2}{ 2 \sqrt{3} }\right) +3^{-9}  U_\nu \left( \frac{1}{ 2 \sqrt{3} }\right) \right)- 3^{-8}  \right). \qedhere\]
\end{proof}

\section{Computations}

In this section we summarize the computations used to produce Figures~\ref{fig:pred}--\ref{fig:primes} and the methods used to efficiently compute the left-hand side (LHS) and right-hand side (RHS) of the formulas in \eqref{main-prediction}, \eqref{primes-P}, \eqref{prime-P}. Here \eqref{main-prediction} is the prediction given by Conjecture~\ref{main-conjecture}, while \eqref{primes-P} and \eqref{prime-P} correspond to the formulas in Proposition~\ref{primes} and Theorem~\ref{prime}, respectively, as $P\to\infty$.  For $P=1$, formulas \eqref{primes-P} and \eqref{prime-P} coincide and agree with Proposition~\ref{all}.

We assume throughout this section that we are working with normalized indicator functions $W_j(u)$ of area 1 whose supports partition the interval $(0,u_{\max}]$ into $r$ subintervals $I_j\colonequals (j\delta,(j+1)\delta]$ of width $\delta\colonequals u_{\max}/r$.  In the computations for Figures~\ref{fig:pred} and~\ref{fig:all} we used $u_{\max}=1$, with $r=2000$ on the LHS and $r=100000$ on the RHS; see Section~\ref{conv} for the rationale behind these choices.  In the context of Conjecture~\ref{main-conjecture}, we assume $(C_1,C_2]\subseteq (0,u_{\max}]$ is equal to a union of $I_j$ (possibly a single $I_j$), and note that normalizing the $W_j(u)$ to have area one amounts to multiplying both the LHS and RHS of \eqref{main-prediction} by the same factor.

\subsection{Approximating the limit on the left}

To approximate the limit as $X\to\infty$ that appears on the LHS of \eqref{main-prediction}, \eqref{primes-P}, \eqref{prime-P}, we consider increasing values of $X=2^n$ and the set of elliptic curves $\{E: H(E)\le X\}$.
For each $E$ we must compute $a_n(E)$ for $n\le u_{\max}N(E)$, or some subset of these $n$, depending on the value of $P$.  We also want to consider increasing values of $P$, and $r$ different functions $W_j$, so we will compute $a_n(E)$ for $n\le N \colonequals u_{\max}N(E)$ just once, accumulating integer sums
\[
s_{j,P}(E)\colonequals \sum_{\substack{n\le N\\p\nmid n\text{ for }p\le P\\\frac{n}{N(E)}\in I_j}} \epsilon(E)a_n(E)
\]
as we go. For \eqref{main-prediction} we take $P=\infty$ and change the constraint ``$p\nmid n\text{ for }p\le P$'' to ``$n$ prime''.

By accumulating the sums $s_{j,P}$ as we compute $a_n(E)$ we avoid the need to store all the $a_n(E)$, which would require a prohibitive amount of storage.  In our computations we maintained approximately 24000 sums $s_{j,P}(E)$ for each $E$, corresponding to $12$ values of $P$ and $r=2000$ values of $j$. This is much less than the number of $a_n(E)$ to be computed; note that $N(E)$ may be as large as $32H(E)$, and with $X=2^{28}$ we will need to compute more than $10^{9}$ values of $a_n(E)$ for each of several million $E$.

To compute $a_p(E)$ we use the \textsc{smalljac} library \cite{KS08}, which provides an efficient implementation of a generic group algorithm \cite{SutherlandThesis} for computing $E(\mathbb{F}_p)=p+1-a_p(E)$ at $p\nmid N(E)$ (it also computes $a_p(E)$ for $p|N(E)$).
It takes $N^{5/4+o(1)}$ time to do this for all $p\le N$, which is asymptotically worse than both the $N(\log N)^{3+o(1)}$ complexity one could obtain using the average polynomial-time approach of \cite{HS14} and the $N(\log N)^{4+o(1)}$ complexity given by Schoof's algorithm (under the Generalized Riemann Hypothesis one can improve this to $N(\log N)^{3+o(1)}$ expected time, see \cite{SS15}, but is still slower than the average polynomial-time approach in practice).
However, in practice the generic group approach is faster than both these approaches for all feasible values of $N$; for $N \le 2^{28}$ this can be seen in the timings of \cite[Table 3]{KS08} and \cite[Table 1]{Sut20}, and in fact this advantage extends to $N\le 2^{40}$ and beyond.  The generic group algorithm also has a better space complexity than the average polynomial-time algorithm: $N^{1/4+o(1)}$ versus $N^{1+o(1)}$.

We compute $a_n(E)$ for $n\le N$ (updating the sums $s_{j,P}(E)$ as we go) as follows:
\begin{enumerate}[label=\arabic*.]
\setlength{\itemsep}{6pt}
\item Compute $a_p(E)$ for $p\le \sqrt{N}$.
\item Compute $a_n(E)$ for $\sqrt{N}$-smooth $n\le N$ using $a_{p^k}=a_pa_{p^{k-1}}-pa_{p^{k-2}}$ and $a_{mn}=a_{m}a_{n}$ for $\gcd(m,n)=1$, and keep the $a_n$ with $n\le \sqrt{N}$ in memory.
\item Compute $a_p(E)$ for $\sqrt{N}<p\le N$ and $a_n(E)=a_{n/p}(E)a_p(E)$ for $p|n\le N$.
\end{enumerate}
In step 2 we use a space-efficient implementation of Bernstein's algorithm \cite[Ch. 2]{Bernstein} to enumerate the $\sqrt{N}$-smooth $n\le N$ along with their prime factorizations using less than $N^{1/2+o(1)}$ space.  This space-efficient approach makes it feasible to perform these computations for many $E$ in parallel on a large scale, which would not have been possible using a naive approach that requires $N^{1+o(1)}$ space.

Once the integers $s_{j,P}(E)$ have been computed for all $E\in \{E:H(E)\le X\}$, we compute our approximation to the LHS for $P$ and the indicator function $W_j(u)$ via
\[
\mathrm{LHS}(j,P,X)\colonequals \frac{1}{\delta}\cdot\frac{1}{\#\mathcal H(X)}\sum_{E\in \mathcal H(X)} \frac{c_{j,P}(E)}{N(E)}s_{j,P}(E),
\]
where $\mathcal H(X) \colonequals \{E:H(E)\le X\}$ and $c_{j,P}(E)\colonequals \prod_{p\le P}(1-\nicefrac{1}{p})^{-1}$ for $P<\infty$.  For $P=\infty$ we instead use $c_{j,P}(E)\colonequals \log (\frac{2j+1}{2}\delta N(E))$ and when considering \eqref{primes-P} and Proposition~\ref{primes} we replace $\mathcal H(X)$ with $\mathcal H_P(X)\colonequals \{E \in \mathcal H(X): p\nmid N(E)\text{ for }p\le P\}$.

As an example, for $X=2^{28}$ there are 5,122,428 elliptic curves $E$ with $H(E)\le X$ with conductors $N(E)$ ranging up to $2^{33}$.  We used a large cloud-based parallel computation to compute approximately $10^{15.75}$ values of $a_n(E)$ and 24,000 sums $s_{j,P}(E)$ for each $E$.
This involved about 140 CPU-years of compute time.

\subsection{Approximating the integral on the right}
To approximate the integral on the RHS of \eqref{main-prediction}, \eqref{main-evaluation}, \eqref{primes-P}, \eqref{prime-P} we evaluate the integrand at the midpoint $u_j\colonequals (j+\tfrac{1}{2})\delta$ of the interval $I_j$, under the assumption that $\delta$ is small enough to make this a good approximation; we discuss the choice of a sufficiently small $\delta$ in Section~\ref{conv}.

For $m\in \mathbb N$ we define multiplicative functions
\begin{align*}
\ell(m) &\colonequals \prod_{p|m}\ell_{p,2v_p(m)}\\
\ell'(m) &\colonequals \begin{cases}
\prod_{p|m}\tilde\ell_{p,2v_p(m)}& \text{ for } \eqref{primes-P}\\
\prod_{p|m}\hat\ell_{p,2v_p(m)}& \text{ for } \eqref{prime-P}\\
\end{cases}\\
\end{align*}
\begin{align*}
\psi(m) &\colonequals
\begin{cases}
\phi(m)^{-1} & \text{ for } \eqref{primes-P} \\
\phi(m)^{-1}\prod_{p|m}\hat\ell_{p,0} & \text{ for } \eqref{prime-P}\\
\end{cases}\\
\varphi_{d,P}(m) &\colonequals \begin{cases}
\prod_{p\le P}p^{v_p(m)} & \text{ for } \eqref{primes-P} \\
\prod_{p|d} p^{v_p(m)} & \text{ for } \eqref{prime-P}\\
\end{cases}\\
\end{align*}
and for $0\le j < r$ and $P\ge 1$ we define
\[
\mathrm{RHS}(j,P,B)\colonequals 2\pi \sqrt{u_j}\!\!\!\sum_{\substack{q\le B\\P\textrm{-smooth }\\\textrm{squarefree}}}\!\!\!\!\frac{\psi(q)}{q}\sum_{d|q}\frac{\mu(d)}{\psi(d)}\!\!\!\!\sum_{\substack{m\le B\\\gcd(m,q)=d}}\!\!\!\!\frac{1}{m}\ell'\Bigl(\varphi_{d,P}(m)\Bigr)\ell\Bigl(\tfrac{m}{\varphi_{d,P}(m)}\Bigr)J_1\Bigl(4\pi\sqrt{u_j}\tfrac{m}{q}\Bigr).
\]
The RHS of \eqref{primes-P}, \eqref{prime-P}  is then equal to $\lim_{B\to\infty}\textrm{RHS}(j,P,B)$, where we set $P=\infty$ in \eqref{primes-P} and \eqref{prime-P} to get the RHS of Proposition~\ref{primes} and Conjecture \ref{main-conjecture}, respectively, and we set $P=1$ (in either \eqref{primes-P} or \eqref{prime-P}) to get the RHS of Proposition \ref{all}.

The multiplicativity of $\ell(m),\ell'(m),\psi(m),\mu(m)$ allows us to efficiently precompute a table of all values of these functions for positive integers $m\le B$ using a sieving approach.  For each $P$-smooth squarefree $d\le B$ we can similarly compute a table of all values of $\varphi_{d,P}(m)$ for $m\le B$, and for \eqref{primes-P} it suffices to just compute $\varphi_{1,P}(m)$ for $m\le B$, since $\varphi_{d,P}(m)$ does not depend on $d$ in these cases.  We also note that for $p>3$ we have $\ell_{p,\nu}=0$  for $\nu < 10$ (since $\dim S_0^{\nu+2}(\SL_2(\mathbb Z))=0$ for $\nu<10$), which means we can ignore all $m$ in the inner sum that are divisible by a prime $p>3$ for which either $p>P$ or $p\nmid d$ (depending on which case we are in) and $v_p(m) < 5$.

With this approach computing $\mathrm{RHS}(j,P,B)$ involves $O(B^{2+o(1)})$ arithmetic operations  and at most $B^2$ evaluations of the Bessel function $J_1(x)$. When performed at any fixed precision, the evaluations of $J_1(x)$ will dominate the computation, and the space required is $O(B)$.  In our implementation we computed $J_1(x)$ to 53 bits of precision via the GNU Scientific Library function \texttt{gsl\_sf\_bessel\_J1} \cite{GSL} and we used 80 bits of precision (GCC type \texttt{long double}) for all arithmetic operations.  For $B\le 2^{20}$ this guarantees at least 20 bits of precision in the values of $\mathrm{RHS}(j,P,B)$ (more when $P$ is small), and in practice the precision is much better than this worst case estimate (we compared a subset of our computations against results computed using 106 bits of precision throughout).

\subsection{Convergence}\label{conv}
In this section we fix $u_{\max}=1$ so that $\delta=\nicefrac{1}{r}$ throughout, and unless otherwise specified we are considering the LHS and RHS of \eqref{prime-P} for various $P$, except in the $P=\infty$ case where we consider the LHS and RHS of Conjecture \ref{main-prediction}.

To determine a suitable choice of $B$ we computed $\mathrm{RHS}(j,P,B)$ for various $P$ and increasing values of $B$ for $0\le j < r=100$ with $\delta=\nicefrac{1}{100}$; we use much larger values of $r$ below, but for empirical tests of convergence, taking 100 uniformly spaced values of $u\in (0,1]$ suffices and allows us to explore a larger range of $B$ at lower cost.
Table~\ref{tab:Blimit} tabulates values of
\[
\max_{0\le j<100}\bigl|\mathrm{RHS}(j,P,B)-\mathrm{RHS}(j,P,2B)\bigr|
\]
for ten different choices of $P$ with $B$ ranging from $2^{10}$ to $2^{20}$.
For reference, in Figures 1--3 the vertical height of each pixel is approximately $\nicefrac{1}{40}=0.025$, and Table~\ref{tab:Blimit} suggests we should take $B\ge 2^{15}$ to achieve this resolution.  Entries in Table~\ref{tab:Blimit} measure the difference between $B$ and $2B$, not $B$ and the limit as $B\to\infty$, but the signs of the differences are roughly equidistributed, so the differences are not cumulative.
To give ourselves some margin for error, we take $B=10^5$, which is used in all our figures and tables other than Table~\ref{tab:Blimit}.

\begin{table}

\small
\begin{tabular}{cccccccccccc}
\multicolumn{1}{c}{} && \multicolumn{10}{c}{$P$} \\
\cline{3-12}
\noalign{\vskip 4pt}
$B$ && $2^{1}$& $2^{2}$& $2^{3}$& $2^{4}$& $2^{5}$& $2^{6}$& $2^{7}$& $2^{8}$& $2^{9}$& $2^{10}$\\
\cline{1-1}\cline{3-12}
\noalign{\vskip 4pt}
$2^{10}$ & & 0.0344 & 0.0512 & 0.0550 & 0.0554 & 0.0568 & 0.0587 & 0.0597 & 0.0600 & 0.0601 & 0.0600\\
$2^{11}$ & & 0.0062 & 0.0064 & 0.0151 & 0.0164 & 0.0167 & 0.0169 & 0.0171 & 0.0170 & 0.0171 & 0.0172\\
$2^{12}$ & & 0.0169 & 0.0210 & 0.0198 & 0.0208 & 0.0216 & 0.0217 & 0.0219 & 0.0220 & 0.0221 & 0.0221\\
$2^{13}$ & & 0.0114 & 0.0169 & 0.0161 & 0.0162 & 0.0166 & 0.0168 & 0.0168 & 0.0170 & 0.0170 & 0.0170\\
$2^{14}$ & & 0.0080 & 0.0122 & 0.0127 & 0.0129 & 0.0134 & 0.0135 & 0.0135 & 0.0135 & 0.0135 & 0.0135\\
$2^{15}$ & & 0.0079 & 0.0117 & 0.0122 & 0.0123 & 0.0127 & 0.0127 & 0.0128 & 0.0128 & 0.0128 & 0.0128\\
$2^{16}$ & & 0.0013 & 0.0016 & 0.0024 & 0.0027 & 0.0028 & 0.0027 & 0.0028 & 0.0027 & 0.0027 & 0.0027\\
$2^{17}$ & & 0.0034 & 0.0039 & 0.0045 & 0.0050 & 0.0051 & 0.0051 & 0.0051 & 0.0051 & 0.0051 & 0.0051\\
$2^{18}$ & & 0.0034 & 0.0039 & 0.0044 & 0.0045 & 0.0046 & 0.0046 & 0.0046 & 0.0046 & 0.0047 & 0.0047\\
$2^{19}$ & & 0.0025 & 0.0029 & 0.0029 & 0.0029 & 0.0029 & 0.0029 & 0.0029 & 0.0029 & 0.0029 & 0.0028\\
$2^{20}$ & & 0.0019 & 0.0021 & 0.0021 & 0.0022 & 0.0022 & 0.0021 & 0.0022 & 0.0022 & 0.0022 & 0.0022\\
\bottomrule
\end{tabular}
\medskip

\caption{Convergence as $B\to\infty$ for varying $P$ in \eqref{prime-P}.  Each table entry is $\max_j|\mathrm{RHS}(j,P,B)-\mathrm{RHS}(j,P,2B)|$ over $0\le j < r= 100$.\vspace{-8pt}}\label{tab:Blimit}
\end{table}

We now consider the choice of $r$ (how many subintervals $I_j$ to use).  For the computations of $\mathrm{LHS}(j,P,X)$ we do not want to make $r$ too large. We want each subinterval $I_j$ to contain many values of $n/N(E)$ as $E$ ranges over elliptic curves with $H(E)\le X$, as we vary $X$ over a range of values that we can feasibly compute, and the value of $X$ constrains $N(E)\le 32X$ and hence the number of $n/N(E)\in I_j$.  The images shown in Figures 1--3 have a width of approximately 2000 pixels, which leads us to choose $r=2000$ when computing $\mathrm{LHS}(j,P,X)$.

For the computation of $\mathrm{RHS}(j,P,B)$ it is both feasible and desirable to use a larger value of~$r$.  As $r$ increases the piecewise linear function defined by the points $\bigl((j+\tfrac{1}{2})\delta,\, \mathrm{RHS}(j,P,B)\bigr)$ approaches a continuous function $f_{P,B}(u)$ that converges to the murmuration density function (the integrand on the RHS of \eqref{main-prediction}) in the limit as $P,B\to\infty$.  We would like to approximate this function on $(0,1]$ as accurately as we can, as this is the green curve shown in Figure~\ref{fig:pred}.

\begin{table}

\small
\begin{tabular}{ccccccccccccc}
\multicolumn{1}{c}{} && \multicolumn{10}{c}{$P$} \\
\cline{3-12}
\noalign{\vskip 4pt}
$r$ && $2^{1}$& $2^{2}$& $2^{3}$& $2^{4}$& $2^{5}$& $2^{6}$& $2^{7}$& $2^{8}$& $2^{9}$& $2^{10}$\\
\cline{1-1}\cline{3-12}
\noalign{\vskip 4pt}
$2^{10}$ & & 0.1817 & 0.2098 & 0.2240 & 0.2264 & 0.2289 & 0.2297 & 0.2299 & 0.2300 & 0.2300 & 0.2300\\
$2^{11}$ & & 0.1218 & 0.1459 & 0.1644 & 0.1669 & 0.1682 & 0.1687 & 0.1689 & 0.1689 & 0.1689 & 0.1689\\
$2^{12}$ & & 0.0952 & 0.1145 & 0.1231 & 0.1237 & 0.1242 & 0.1243 & 0.1243 & 0.1243 & 0.1243 & 0.1243\\
$2^{13}$ & & 0.0635 & 0.0826 & 0.0877 & 0.0883 & 0.0884 & 0.0884 & 0.0885 & 0.0885 & 0.0885 & 0.0885\\
$2^{14}$ & & 0.0507 & 0.0606 & 0.0624 & 0.0627 & 0.0627 & 0.0627 & 0.0627 & 0.0627 & 0.0627 & 0.0627\\
$2^{15}$ & & 0.0339 & 0.0388 & 0.0411 & 0.0413 & 0.0413 & 0.0413 & 0.0413 & 0.0413 & 0.0413 & 0.0413\\
$2^{16}$ & & 0.0237 & 0.0275 & 0.0286 & 0.0287 & 0.0287 & 0.0287 & 0.0287 & 0.0287 & 0.0287 & 0.0287\\
$2^{17}$ & & 0.0168 & 0.0209 & 0.0215 & 0.0216 & 0.0216 & 0.0216 & 0.0216 & 0.0216 & 0.0216 & 0.0216\\
\bottomrule
\end{tabular}
\medskip

\caption{Convergence as $r\to\infty$ for $B=10^5$ and varying $P$ in \eqref{prime-P}.  Each table entry in the row for $r=2^n$ is the maximum absolute difference between the two sides of \eqref{rapprox} over $1\le j < 2r-1$.}\label{tab:rlimit}
\end{table}

Let $\mathrm{RHS}_r(j,P,B)$ denote the value of $\mathrm{RHS}(j,P,B)$ with $\delta=\frac{1}{r}$ determining $u_j=(j+\tfrac{1}{2})\delta$ in the formula for $\mathrm{RHS}(j,P,B)$.  
We can linearly interpolate values of $\mathrm{RHS}_{2r}(j,P,B)$ from values of $\mathrm{RHS}_{r}(j,P,B)$ via
\begin{equation}\label{rapprox}
\mathrm{RHS}_{2r}(j,P,B)\approx \frac{(2+(-1)^j)\mathrm{RHS}_{r}(\lfloor\tfrac{j}{2}\rfloor,P,B) + (2-(-1)^j)\mathrm{RHS}_{r}(\lfloor\tfrac{j}{2}\rfloor+1,P,B)}{4}.
\end{equation}

Table~\ref{tab:rlimit} provides data on the accuracy of this interpolation for various values of $P$ and increasing values of $r=2^n$.  If our goal is to approximate the murmuration density function on $(0,1]$ to within $0.025$ (the vertical height of a single pixel in our figures), the data in Table~\ref{tab:rlimit} suggests that $r=2000$ is too small; we instead use $r=10^5$ for computing $\mathrm{RHS}(j,P,B)$.

Having fixed $B=10^5$ and $r=10^5$, we now consider the rate of convergence as $P\to\infty$.
For any fixed $B$ and $j$, the value of $\mathrm{RHS}(j,P,B)$ is constant for all $P\ge B$, so we may take $P=B$ as the limit of $P\to\infty$.  Table~\ref{tab:Plimit} shows the maximum value of the difference between $\mathrm{RHS}(j,P,B)$ and $\mathrm{RHS}(j,\infty,B)=\mathrm{RHS}(j,B,B)$ over $0\le j < 10^5$ for increasing values of~$P$.  One can see that for $P\ge 64$ the difference is already too small to be noticeable in our figures.

\begin{table}
\small
\begin{tabular}{cccccccccc}
\multicolumn{10}{c}{$P$} \\
\cline{1-10}
\noalign{\vskip 4pt}
$2^{1}$& $2^{2}$& $2^{3}$& $2^{4}$& $2^{5}$& $2^{6}$& $2^{7}$& $2^{8}$& $2^{9}$& $2^{10}$\\
\cline{1-10}
\noalign{\vskip 4pt}
2.1133 & 0.9206 & 0.2260 & 0.1228 & 0.0539 & 0.0155 & 0.0074 & 0.0029 & 0.0012 & 0.0009\\
\bottomrule
\end{tabular}
\medskip

\caption{Convergence to the murmuration density function in \eqref{main-prediction} as $P\to\infty$ in \eqref{prime-P} with $B=10^5$ fixed.  Each entry is $\max_j |\mathrm{RHS}(j,P,B)-\mathrm{RHS}(j,\infty,B)|$ taken over $0\le j < r=100000$.}\label{tab:Plimit}
\end{table}

We now want to compare values of $\mathrm{LHS}(j,P,X)$ over $0\le j < r= 2000$ as $X\to\infty$ to corresponding values of $\mathrm{RHS}(j',P,B)$ over $0\le j' < r'=10^5$ for various values of $P$. 
We are thus led to define
\[
\mathrm{RHS}'(j,P,B) = \frac{1}{50}\sum_{j'=50j}^{50j+49}\mathrm{RHS}(j',P,B),
\]
which is the average value of $\mathrm{RHS}(j',P,B)$ over the interval $I_j=\bigcup_{50j\le j' < 50(j+1)} I_{j'}$.

Table~\ref{tab:LR} compares $\mathrm{LHS}(j,P,X)$ to $\mathrm{RHS}'(j,P,B)$ for various values of $P$ (including $P=\infty$) and increasing values of $X=2^n$.  The bottom entry in the $P=\infty$ column corresponds to Figure~\ref{fig:pred}, while the bottom entry in the $P=1$ column corresponds to Figure~\ref{fig:all}.

\begin{table}

\small
\begin{tabular}{crcccccccccc}
&&\multicolumn{10}{c}{$P$} \\
\cline{4-12}
\noalign{\vskip 4pt}
$X$ & $\#\mathcal{H}(X)$ && $1$ & $2$ & $4$ & $8$ & $16$ & $32$ & $64$ & $128$ & $\infty$\\
\cline{1-2}\cline{4-12}
\noalign{\vskip 4pt}
$2^{16}$ & 5042 & & 0.2271 & 0.4525 & 0.5655 & 0.6908 & 0.7665 & 0.8788 & 0.9484 & 1.0382 & 1.1693\\
$2^{17}$ & 9014 & & 0.1807 & 0.3601 & 0.4448 & 0.5383 & 0.5910 & 0.6715 & 0.7351 & 0.8088 & 0.9110\\
$2^{18}$ & 15936 & & 0.1382 & 0.2759 & 0.3416 & 0.4115 & 0.4420 & 0.4999 & 0.5492 & 0.6001 & 0.7252\\
$2^{19}$ & 28138 & & 0.1080 & 0.2155 & 0.2642 & 0.3207 & 0.3466 & 0.3878 & 0.4305 & 0.4680 & 0.5751\\
$2^{20}$ & 50886 & & 0.0838 & 0.1673 & 0.2065 & 0.2490 & 0.2667 & 0.3009 & 0.3327 & 0.3602 & 0.4597\\
$2^{21}$ & 89570 & & 0.0654 & 0.1307 & 0.1643 & 0.1959 & 0.2086 & 0.2305 & 0.2535 & 0.2765 & 0.3731\\
$2^{22}$ & 159988 & & 0.0527 & 0.1053 & 0.1306 & 0.1554 & 0.1659 & 0.1825 & 0.1983 & 0.2171 & 0.3067\\
$2^{23}$ & 286254 & & 0.0429 & 0.0858 & 0.1046 & 0.1236 & 0.1305 & 0.1426 & 0.1545 & 0.1717 & 0.2548\\
$2^{24}$ & 508826 & & 0.0351 & 0.0700 & 0.0851 & 0.0987 & 0.1036 & 0.1128 & 0.1233 & 0.1385 & 0.2139\\
$2^{25}$ & 906302 & & 0.0284 & 0.0566 & 0.0692 & 0.0806 & 0.0845 & 0.0917 & 0.1004 & 0.1119 & 0.1769\\
$2^{26}$ & 1615826 & & 0.0227 & 0.0452 & 0.0549 & 0.0639 & 0.0669 & 0.0730 & 0.0794 & 0.0877 & 0.1443\\
$2^{27}$ & 2873164 & & 0.0184 & 0.0367 & 0.0443 & 0.0505 & 0.0530 & 0.0572 & 0.0624 & 0.0696 & 0.1205\\
$2^{28}$ & 5122428 & & 0.0147 & 0.0294 & 0.0354 & 0.0403 & 0.0427 & 0.0463 & 0.0501 & 0.0561 & 0.1013\\
\bottomrule
\end{tabular}
\medskip

\caption{Comparison of $\mathrm{LHS}(j,P,X)$ and $\mathrm{RHS}'(j,P,B)$ in \eqref{prime-P} for varying~$P$, and in \eqref{main-prediction} for $P=\infty$, as $X\to\infty$. Each entry is the average of $|\mathrm{LHS}(j,P,X)-\mathrm{RHS}'(j,P,B)|$ over $0\le j < 2000$.}\label{tab:LR}
\end{table}

Table~\ref{tab:LRpc} shows analogous data in the $P=\infty$ case when we restrict to curves of prime conductor.
Now $\mathrm{LHS}(j,P,X)$ corresponds to the LHS of \eqref{main-prediction} restricted to curves of prime conductor, while $\mathrm{RHS}(j',P,B)$ corresponds to murmuration density function in Proposition~\ref{primes} with the sum truncated at $B=10^5$ and $\mathrm{RHS}'(j,P,B)$ defined as above.
The entry in Table~\ref{tab:LRpc} for $X=2^{40}$ corresponds to Figure~\ref{fig:primes}.

\begin{table}
\small
\begin{tabular}{crccccccccc}
\toprule
$X$ & $2^{31}$ & $2^{32}$ & $2^{33}$ & $2^{34}$ & $2^{35}$ & $2^{36}$ & $2^{37}$ & $2^{38}$ & $2^{39}$ & $2^{40}$\\
$\#\mathcal{H}(X)$ & 7909 & 13500 & 22867 & 39004 & 66799 & 114039 & 195027 & 334454 & 573010 & 987442\\
& 0.9963& 0.7913& 0.6131& 0.4852& 0.3812& 0.2983& 0.2320& 0.1802& 0.1414& 0.1097\\
\bottomrule
\end{tabular}
\medskip

\caption{Comparison of $\mathrm{LHS}(j,P,X)$ and $\mathrm{RHS}'(j,P,B)$ for $P=\infty$ in \eqref{primes-P}, as $X\to\infty$ restricted to prime conductor curves. Each entry in the bottom row is the average of $|\mathrm{LHS}(j,P,X)-\mathrm{RHS}'(j,P,B)|$ over $0\le j < 2000$.}\label{tab:LRpc}
\end{table}

We conclude with Figure~\ref{fig:Ps}, which plots the LHS and RHS of \eqref{prime-P} for $P=2,4,8,16$ with $B=10^{5}$ and $X=2^{28}$, which can be viewed as the first four in a sequence of plots as $P\to\infty$ interpolating between Figure~\ref{fig:all} $(P=1)$ and Figure~\ref{fig:pred} $(P=\infty)$ as $P$ increases.  These plots are already closer to Figure~\ref{fig:pred} than Figure~\ref{fig:all}, which demonstrates the large impact of the local factors at small primes.  As can be seen in Table~\ref{tab:Plimit} above, the plot for $P=16$ is already fairly close to the plot for $P=\infty$.

Animated versions of these plots showing convergence of $\mathrm{LHS}(j,P,X)$ as $X\to\infty$ for various fixed values of $P$, and as $P\to\infty$ for a fixed value of $X$, are available at
\bigskip

\begin{center}
\url{https://math.mit.edu/~drew/ssplots}
\end{center}

\FloatBarrier

\begin{figure}
\includegraphics[width=\textwidth]{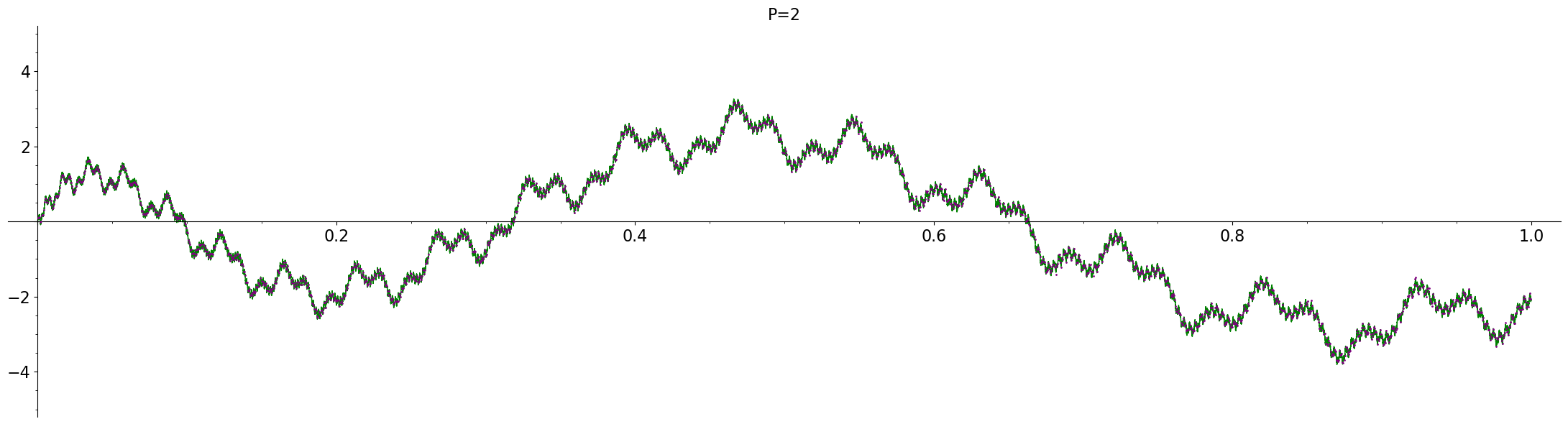}
\medskip

\includegraphics[width=\textwidth]{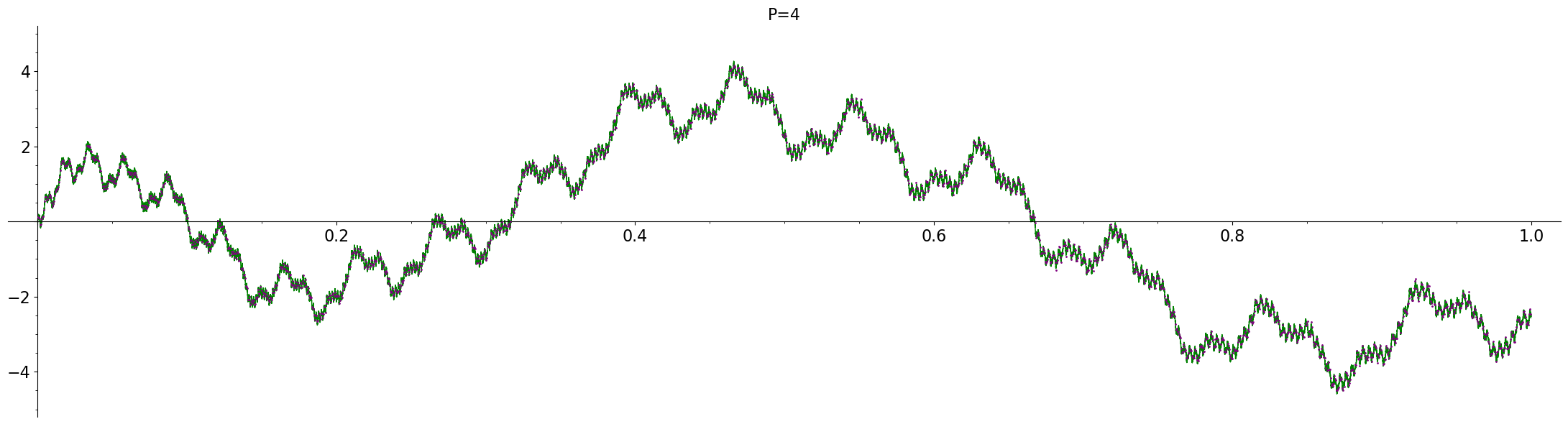}
\medskip

\includegraphics[width=\textwidth]{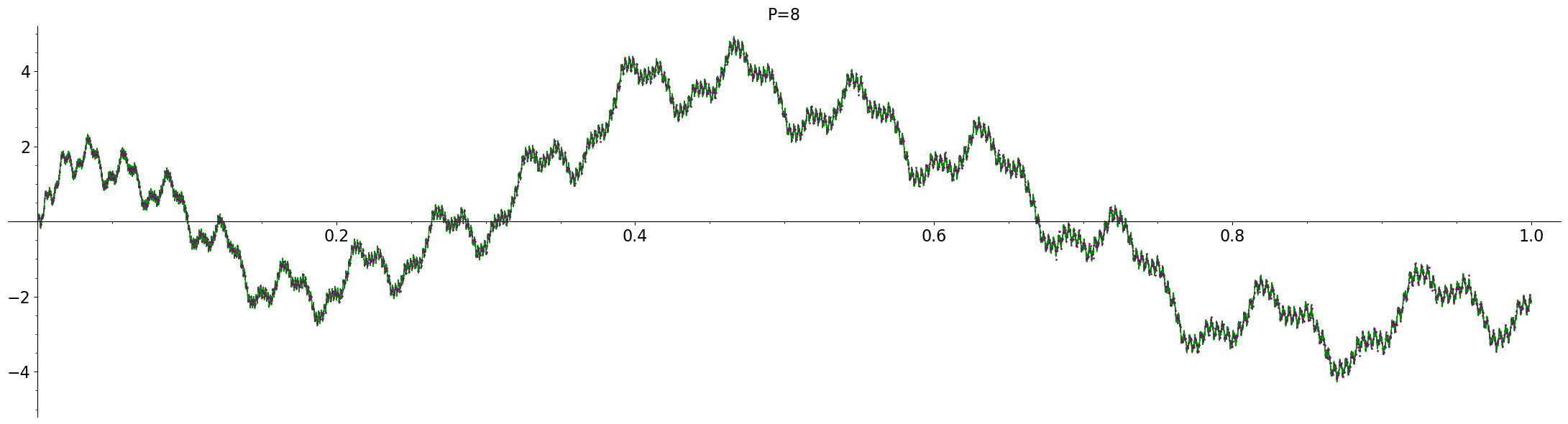}
\medskip

\includegraphics[width=\textwidth]{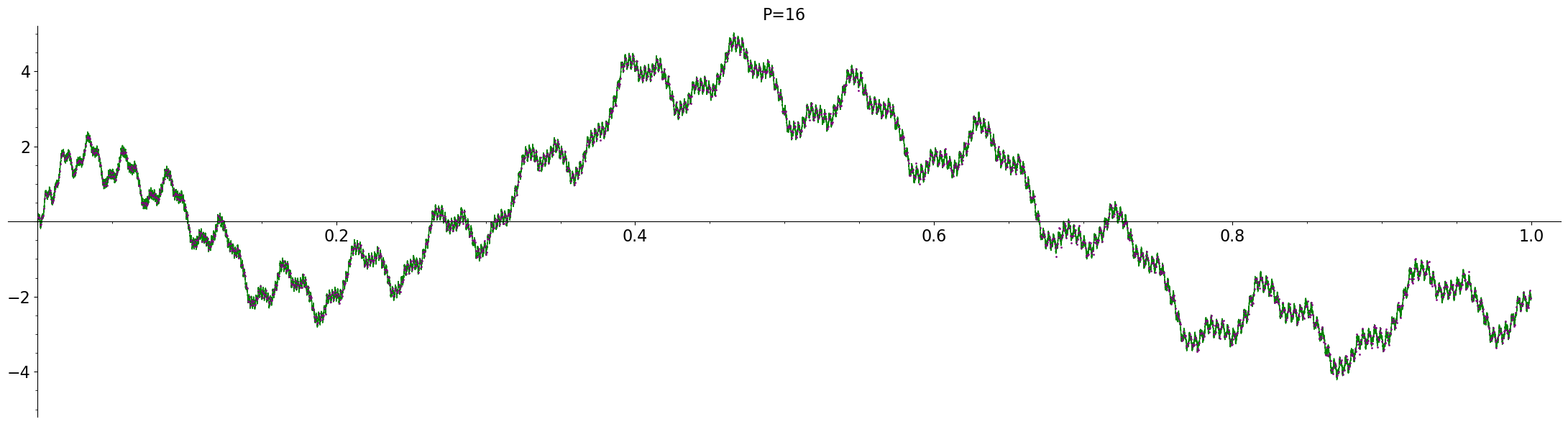}
\caption{\small Plots of $\mathrm{LHS}(j,P,2^{28})$  (purple dots) and $\mathrm{RHS}(j',P,10^5)$ (green curve) for $P=2,4,8,16$.}\label{fig:Ps}
\end{figure}

\FloatBarrier

\end{document}